\documentclass[11pt]{article}
\usepackage{etex}
\usepackage{amsmath,amssymb,amsfonts,amsthm}
\usepackage{a4wide}
\usepackage{color}
\usepackage{verbatim}
\usepackage{epsfig,subfigure,epstopdf}
\usepackage[]{graphics}
\usepackage{graphicx,inputenc}
%\graphicspath{{figures_V0901/}}%
\usepackage{subfigure}
\usepackage[justification=centering]{caption}
\usepackage{pictex}
\usepackage{wrapfig}
\usepackage{booktabs}
\usepackage{multirow}
\captionsetup[figure]{font=small,labelfont=small}
\captionsetup[figure]{font=footnotesize,labelfont=footnotesize}
\usepackage[section]{placeins}

\usepackage[linesnumbered,ruled,vlined, ruled,vlined]{algorithm2e}

\usepackage[colorlinks,linktocpage,linkcolor=blue]{hyperref}
\usepackage{fancyhdr}

\usepackage{algorithmic}

%\usepackage{titlesec}
%\titleformat{\section}
%{\normalfont\Large\bfseries}{\thesection}{1em}{}

\usepackage[T1]{fontenc}
\usepackage{authblk}
\usepackage[textwidth=6.0in,textheight=9in]{geometry}

\usepackage[colorlinks,linktocpage,linkcolor=blue]{hyperref}
\usepackage{fancyhdr}
%\UseRawInputEncoding

%%%%%%%%%%%%%%%%%%%%%%%%%%%%%%%%%%%%%%%%%%%%%%%%%%%%
%\setlength{\topmargin}{-0.25in}
%\setlength{\textheight}{9.25in}
%\setlength{\textwidth}{6in}
%\setlength{\oddsidemargin}{0.25in}
%\setlength{\evensidemargin}{0.25in}

%\setlength{\marginparwidth}{0.25in}
%\lefthyphenmin=2
%\righthyphenmin=3
%\setcounter{tocdepth}{2}

%\usepackage{geometry}
%\geometry{left=2.5cm,right=2.5cm,top=2.5cm,bottom=2.5cm}

\setlength{\parindent}{4em}
\setlength{\parskip}{1em}

\newtheoremstyle{exampstyle}
  {\topsep} % Space above
  {\topsep} % Space below
  {\itshape} % Body font
  {} % Indent amount
  {\bfseries} % Theorem head font
  {.} % Punctuation after theorem head
  {.5em} % Space after theorem head
  {} % Theorem head spec (can be left empty, meaning `normal')

\theoremstyle{exampstyle}

\numberwithin{equation}{section}
\newtheorem{lemma}{Lemma}[section]

\newtheorem{theorem}{Theorem}[section]

\newtheorem{remark}{Remark}[section]
\newtheorem{assumption}{Assumption}[section]
\allowdisplaybreaks

\setlength{\textwidth}{6.0true in}
\usepackage{parskip}
\let\oldref\ref
\renewcommand{\ref}[1]{(\oldref{#1})}  % stupid kludge for laTeX
\renewcommand{\ref}[1]{(\oldref{#1})}

%\font\titlefonrm=phvrrn scaled \magstep3

%%%%%%%%%%%%%%%%%%%%%%%%%%%%%%%%%%%%%%%%%%%%%%%%%%%%
\newbox\boxaddrone \newbox\boxaddrtwo

\def\N+{n\in\mathbb{N}^{+}}

\def\E{\mathbb{E}}

\def\R{\mathbb{R}}

\def\n{\partial{\overrightarrow{\bf n}}}
\def\B{\mathcal B}

\newcommand{\q}{\quad}    \def\R{{\mathbb R}}

\newtheorem{example}{Example}[section]

%\newtheorem{definition}{Definition}[section]

% new from Wenlong 

\newcommand{\lla}{\|{\hskip -1pt}|}
\newcommand{\rra}{\|{\hskip -1pt}|}

\newcommand{\Om}{\Omega}
\newcommand{\de}{\delta}

\newcommand{\lam}{\lambda}

\newcommand{\vep}{\varepsilon}
\newcommand{\lj}{|{\hskip -1pt} \|}
\newcommand{\rj}{|{\hskip -1pt} \|}

\newcommand{\diam}{\mathrm{diam}}

\newcommand{\sd}{\mathsf{d}}

\newcommand{\be}{\begin{eqnarray}}
\newcommand{\ee}{\end{eqnarray}}
\newcommand{\beq}{\begin{equation}}
\newcommand{\eeq}{\end{equation}}
\newcommand{\ben}{\begin{eqnarray*}}
\newcommand{\een}{\end{eqnarray*}}
\newcommand{\nn}{\nonumber}

\begin{document}

\title{Quantitative estimates for a nonlinear inverse source problem in a coupled diffusion equations with uncertain measurements}

\author[1,2]{Chunlong Sun\thanks{sunchunlong@nuaa.edu.cn}}
\author[3]{Wenlong Zhang\thanks{ zhangwl@sustech.edu.cn}}
\author[4,5]{Zhidong Zhang\thanks{zhangzhidong@mail.sysu.edu.cn}}

\affil[1]{School of Mathematics, Nanjing University of Aeronautics and Astronautics, Nanjing 211106, Jiangsu, China}
\affil[2]{Nanjing Center for Applied Mathematics, Nanjing 211135, Jiangsu, China}
\affil[3]{Department of Mathematics, Southern University of Science and Technology (SUSTech), Shenzhen, Guangdong, China}
\affil[4]{School of Mathematics (Zhuhai), Sun Yat-sen University, Zhuhai 519082, Guangdong, China}
\affil[5]{Guangdong Province Key Laboratory of Computational Science, Sun Yat-sen University, Guangzhou 510000, Guangdong, China}

\maketitle

\begin{abstract}
\noindent This work considers a nonlinear inverse source problem in a coupled diffusion equation from the terminal observation. Theoretically, under some conditions on problem data, we build the uniqueness theorem for this inverse problem and show two Lipschitz-type stability results in $L^2$ and $(H^1(\cdot))^*$ norms, respectively. However, in practice, we could only observe the measurements at discrete sensors, which contain the noise. Hence, this work further investigates the recovery of the unknown source from the discrete noisy measurements. We propose a stable inversion scheme and provide probabilistic convergence estimates between the reconstructions and exact solution in two cases: convergence respect to expectation and convergence with an exponential tail. We provide several numerical experiments to illustrate and complement our theoretical analysis.\\ 

\noindent Keywords: inverse problem, uniqueness, conditional stability, quantitative estimates, numerical inversions.\\ 

\noindent AMS subject classifications: 35R30, 65J20, 65M60, 65N21, 65N30. 
\end{abstract}

\section{Introduction.}
%\subsection{Mathematical statement.}
This work focuses on a nonlinear inverse problem in a coupled diffusion systems. Denoting $u_e, u_m$ as the excitation field and emission field, respectively, we consider the coupled system as follows:     
\begin{equation}\label{PDE_ue}
\begin{cases}
\begin{aligned}
(\partial_t-\Delta) u_e(x,t)+(p(x)+q(x))u_e(x,t) &=0, && x \in \Omega\times(0,T],\\
\B u_e(x,t) &= b(x,t), &&x\in\partial\Omega\times(0,T],\\
u_e(x,0)&=0, &&x\in\Omega,\\
\end{aligned}
\end{cases}
\end{equation}
and
\begin{equation}\label{PDE_um}
\begin{cases}
\begin{aligned}
(\partial_t-\Delta) u_m(x,t)+p(x)u_m(x,t) &=q(x)u_e(x,t), && (x,t) \in \Omega\times(0,T],\\
\B u_m(x,t) &=0, &&(x,t)\in\partial\Omega\times(0,T],\\
u_m(x,0)&=0,&&x\in\Omega.
\end{aligned}
\end{cases}
\end{equation}
Here $\Omega\subset\mathbb R^d$ ($1\leq d\leq 3$) is the background medium with smooth boundary $\partial\Omega$, and the boundary condition $\B u=\beta\frac{\partial u}{\n}+u$ with $\beta>0$. Above coupled diffusion system could describe the two diffusion processes in optical tomography, namely, excitation and emission \cite{Arridge99,Arr09,Liu:2020}, where $u_e$ and $u_m$ denote the photon densities of excitation light and emission light, respectively; $p(x)>0$ is the background absorption and $q(x)\ge 0$ is the absorption of the fluorophores in $\Omega$.

In this work, we aim to use the final time data $u_m(x,T)$ to recover the unknown source term $q(x)$. For $q(x)$, we define $\mathcal{Q}:=\{\psi\in C(\bar\Omega): 0\leq \psi \leq M<\infty\}$ as the admissible set, and denote the forward operator $\mathbb{G}: \mathcal{Q}\to H^2(\Omega)$ by 
\begin{equation*}
\mathbb{G}\psi:=u_m(x,T;\psi),\;x\in\Omega.
\end{equation*}
Then the interested inverse problem could be stated as follows: 
\begin{equation}\label{IP-continuous}
\text{with data}\ g(x):=u_m(x,T),\ \text{find}\ q\in {\mathcal Q}\ \text{such that}\ \mathbb{G} q=g.
\end{equation}

In this work, the uniqueness and stability of the nonlinear inverse problem \ref{IP-continuous} would be investigated. Denoting the exact solution by $q^*$, we construct a monotone operator as 
\begin{equation*}\label{operator-K}
K q=\frac{\partial_t u_m(x,T;q)-\Delta g+p(x)g(x)}{u_e(x,T;q)},
\end{equation*}
whose fixed points could generate the desired data $g(x)$. Here the notations $u_e(x,T;q)$ and $u_m(x,T;q)$ reflect the dependence of $u_e$ and $u_m$ on $q$. In Theorem \ref{theorem_uniqueness}, we prove that there is at most one fixed point of $K$, which immediately leads to the uniqueness result of the inverse problem. Next, under some conditions on problem data, we show two Lipschitz-type stability in $L^2$ and $(H^1(\cdot))^*$ norms, respectively (see Theorem \ref{thm-stability}). 

However, in the practical applications of the inverse problem, we could only obtain the noisy datum $g^\sigma$ of $g$ at discrete sensors. Hence, for the numerical reconstruction, we restate the concerned inverse problem in this work as follows:
\begin{equation}\label{IP-numeric}
\text{recovering the unknown $q(x)$ from discrete measurements $\{g^\sigma(x_i)\}_{i=1}^n$}.
\end{equation}
This work will present a quantitative understanding of the convergence in probability of the regularized solutions to inverse problem \ref{IP-numeric}, under the measurement data with random variables.

The inverse source problems may arise from very different applications and modeling, e.g., 
diffusion or groundwater flow processes \cite{new1,AB01, new3, GER83, new2, new6,Isakov2013} 
(\cite{new2} 
recovers the source term from interior measurements, whose motivation lies in the seawater intrusion phenomenon), 
heat conduction or convection-diffusion processes \cite{new4,GER83, liu16, tadi97}, 
%the wave equation \cite{new5,Jiang2015}, 
or acoustic problems \cite{Badia2011, nelson20}. 
%More introductions on inverse source problems can be found in the book \cite{new6}. 
Pollutant source inversion can find many applications, e.g., 
indoor and outdoor air pollution, detecting and monitoring underground water pollution.
Physical, chemical and biological measures have been developed  for the identification of
sources and source strengths \cite{AB01,ZCP01}. Hence, the research on inverse source problems has drawn more and more attention from people and we list several references here. The article \cite{BaoLiZhao:2020} considers the inverse source problems in elastic and electromagnetic waves, and deduce the uniqueness and stability results; the authors in \cite{ChengYamamoto:2022} prove a continuation result and apply it to the inverse source problems in elliptic and parabolic equations; \cite{LiLiWang:2022, WangXuZhao:2024, LassasLiZhang:2023} concern with the inverse random source problems; in \cite{LinZhangZhang:2022, SunZhang:2022, RundellZhang:2020, IsakovLu:2020}, the authors discuss the inverse source problems with boundary measurements. For more works on the inverse source problems, we refer to \cite{LinOuZhangZhang:2024, ZhangWuGuo:2024, FuZhang:2021, Isakov:1990,DingGongLiuLo:2024, JiangLiYamamoto:2024} and the references therein. However, to our knowledge, the nonlinear inverse source problems in coupled diffusion equations such as \ref{IP-continuous} and \ref{IP-numeric} have not been well studied both deterministically and statistically.

In practice, the data is measured at scattered points using sensors, which is contaminated by random measurement errors. Thus it is important to study the discrete random observation model. \cite{Chen-Zhang2022, Chen-Zhang} study a regularized formulation of an inverse source problem and the thin plate spline model with stochastic pointwise measurements and its discretization using the Galerkin FEM. %There are many research from Bayesian point of view, see \cite{Stuart_2010} and the subsequent works. 
This work addresses the inverse problem \ref{IP-numeric} where the available data consists of discrete pointwise measurements contaminated by additive random noise. The measurement model is formulated as:
$$g_i^\sigma=g(x_i)+e_i, \quad i=1, 2, \cdots, n,$$
where the deterministic observational points $\{x_i\}_{i=1}^n$ are distributed quasi-uniformly over the domain $\Omega$; $g(x_i)$
 represents the true (noise-free) value of the unknown function at location $x_i$; $\sigma$ indicates the noise level in the observed data. 
 The random noise vector $\{e_i\}_{i=1}^n$ consists of independent and identically distributed random variables. We will discuss two different types of random variables and investigate the probabilistic convergence estimates between the reconstructions and exact solution in two cases: convergence respect to expectation and convergence with an exponential tail.   

The rest of the paper is organized as follows. In section \ref{section2}, we prove the uniqueness and stability of the inverse problem \ref{IP-continuous} by constructing a monotone fixed point iteration. Then, in section \ref{section3-Stochastic}, we prove the stochastic error estimates for the inverse problem \ref{IP-numeric}. We present a quantitative understanding of the convergence in probability of the regularized solutions. We follow the idea in section \ref{section3-Stochastic} and design the inversion algorithms in section \ref{section4}. We present illustrative two-dimensional numerical results to verify the effectiveness of proposed algorithms and the theoretical convergence estimates.

\section{Uniqueness and stability of inverse problem \ref{IP-continuous}.}\label{section2}
 This section aims to investigate the uniqueness and stability of the nonlinear inverse source problem \ref{IP-continuous}. Our approach is to propose a monotone operator which generates a pointwise decreasing sequence converging to the exact source $q^*$. 

%\subsection{Some preliminary properties.}
\subsection{Positivity results.}

Firstly, in the next two lemmas we recall the regularity properties and maximum principle \cite{Evans:1998}.

\begin{comment}
\begin{lemma}\label{lemma_uniquesolution}
Let $p, q\in C(\bar\Omega)$, the initial data $a\in H^1(\Omega)$, and the boundary input $b\in L^2(0,T;H^{1/2}(\partial\Omega))\cap H^{1/4}(0,T;L^2(\partial\Omega))$ with the compatibility condition
$\mathcal{B}a=b(x,0)$ on $\partial\Omega$. Then we have the following results: 

(i) There exists a unique solution $u_e\in L^2(0,T;H^2(\Omega))\cap H^1(0,T;L^2(\Omega))$ to the initial-boundary value problem \ref{PDE_ue} such that  
\begin{equation}
\|u_e\|_{L^2(0,T;H^2(\Omega))}\leq C(\|a\|_{H^1(\Omega)}+\|b\|_{L^2(0,T;H^{1/2}(\partial\Omega))}+\|b\|_{H^{1/4}(0,T;L^2(\Omega))}),
\end{equation}
where the constant $C$ is independent of $a$ and $b$. 

(ii) There exists a unique solution $u_m\in L^\infty(0,T;H^2(\Omega))$ to the initial-boundary value problem \ref{PDE_um}, with $\partial_t u_m \in L^\infty(0,T;L^2(\Omega))\bigcap L^2(0,T;H^1(\Omega))$ and  the estimate
\begin{equation}
\begin{aligned}
\|u_m\|_{L^\infty(0,T;H^2(\Omega))} + \|\partial_t u_m\|_{L^\infty(0,T;L^2(\Omega))} &+ \|\partial_t u_m\|_{L^2(0,T;H^1(\Omega))} \\
& \leq C\|q\|_{L^\infty(\Omega)}\|u_e\|_{H^1(0,T;L^2(\Omega))},
\end{aligned}
\end{equation}
where the constant $C$ is independent of $p$ and $u_e$.
\end{lemma}
\end{comment}

\begin{lemma}\label{lemma_regularity}
The parabolic model of $v$ is given as  
 \begin{equation*}
\begin{cases}
\begin{aligned}
(\partial_t-\Delta) v(x,t)+p(x)v(x,t) &=f(x,t), && (x,t) \in \Omega\times(0,T],\\
\B v(x,t) &= 0, &&(x,t)\in\partial\Omega\times(0,T],\\
v(x,0)&=0,&&x\in\Omega.
\end{aligned}
\end{cases}
\end{equation*}
With $f\in H^1(0,T;L^2(\Omega))$, $v$ satisfies the next regularity result 
\begin{equation*}
 \max_{t\in[0,T]} (\|v(\cdot,t)\|_{H^2(\Omega)}+\|\partial_t v(\cdot,t)\|_{L^2(\Omega)})\le C\|f\|_{H^1(0,T;L^2(\Omega))},
\end{equation*}
where the constant $C$ depends on $\Omega$, $T$ and $p$. We could see that the $H^2$ regularity and the fact $1\le d\le 3$ ensure the continuity of $v(\cdot, t)$.  
\end{lemma}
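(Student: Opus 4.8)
The plan is to combine the energy method with elliptic regularity, in the spirit of the standard parabolic theory of \cite{Evans:1998}. To keep everything rigorous I would carry out the estimates below on a Galerkin (spectral) approximation $v_N$ built from the eigenfunctions of $-\De$ subject to the Robin condition $\B\,\cdot=0$, derive the bounds uniformly in $N$, and then transfer them to $v$ by weak lower semicontinuity of the norms; I suppress the approximation index for readability. The first step is the basic energy identity: testing the equation with $v$ and integrating over $\Om$, the term $-\int_\Om \De v\,v$ produces $\|\na v\|_{L^2(\Om)}^2$ together with a boundary contribution, and here the Robin condition $\beta\,\pa_{\bn} v + v = 0$ gives $\pa_{\bn} v = -v/\beta$, so that boundary term equals $\beta^{-1}\|v\|_{L^2(\pa\Om)}^2\ge 0$ and carries the favorable sign. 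Using $p>0$ and discarding the nonnegative $\int_\Om p v^2$ term, I obtain
\begin{equation*}
\tfrac12\tfrac{d}{dt}\|v\|_{L^2(\Om)}^2 + \|\na v\|_{L^2(\Om)}^2 + \beta^{-1}\|v\|_{L^2(\pa\Om)}^2 \le \int_\Om f v \le \tfrac12\|f\|_{L^2(\Om)}^2 + \tfrac12\|v\|_{L^2(\Om)}^2 .
\end{equation*}
Since $v(\cdot,0)=0$, Gronwall's inequality controls $\max_t\|v(\cdot,t)\|_{L^2(\Om)}$ by $\|f\|_{L^2(0,T;L^2(\Om))}$.

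The crucial step is to gain control of $\pa_t v$. Differentiating the equation in time and setting $w:=\pa_t v$, one finds that $w$ solves the same type of problem with source $\pa_t f$ and the same homogeneous Robin condition. Evaluating the original equation at $t=0$ and using $v(\cdot,0)=0$ (hence $\De v(\cdot,0)=0$ and $p v(\cdot,0)=0$) identifies the initial value $w(\cdot,0)=f(\cdot,0)$. Testing the $w$-equation with $w$ and applying Gronwall exactly as above yields
\begin{equation*}
\max_{t\in[0,T]}\|\pa_t v(\cdot,t)\|_{L^2(\Om)}^2 \le C\big(\|f(\cdot,0)\|_{L^2(\Om)}^2 + \|\pa_t f\|_{L^2(0,T;L^2(\Om))}^2\big).
\end{equation*}
Because $H^1(0,T;L^2(\Om))\hookrightarrow C([0,T];L^2(\Om))$, the term $\|f(\cdot,0)\|_{L^2(\Om)}$ is itself dominated by $\|f\|_{H^1(0,T;L^2(\Om))}$, so $\max_t\|\pa_t v(\cdot,t)\|_{L^2(\Om)}\le C\|f\|_{H^1(0,T;L^2(\Om))}$.

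Finally I would recover the $H^2$ bound by freezing time and invoking elliptic regularity. At each fixed $t$, the function $v(\cdot,t)$ solves the elliptic Robin problem $-\De v + p v = f - \pa_t v$, whose right-hand side is now controlled in $L^2(\Om)$ by the previous two steps. Since $\pa\Om$ is smooth, elliptic $H^2$-regularity gives $\|v(\cdot,t)\|_{H^2(\Om)}\le C(\|f(\cdot,t)\|_{L^2(\Om)}+\|\pa_t v(\cdot,t)\|_{L^2(\Om)}+\|v(\cdot,t)\|_{L^2(\Om)})$; taking the maximum over $t$ and again using the time embedding to bound $\|f(\cdot,t)\|_{L^2(\Om)}$ by $\|f\|_{H^1(0,T;L^2(\Om))}$ delivers the asserted estimate. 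The continuity of $v(\cdot,t)$ is then immediate from the Sobolev embedding $H^2(\Om)\hookrightarrow C(\bar\Om)$, which holds precisely for $1\le d\le 3$.

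I expect the main subtlety to lie in rigorously justifying the time-differentiation argument — in particular the identification $w(\cdot,0)=f(\cdot,0)$ and the legitimacy of testing the differentiated equation with $w$, which requires enough a priori regularity. This is exactly why the Galerkin framework is the right vehicle: on each finite-dimensional subspace all the manipulations above reduce to elementary ODE energy estimates, the compatibility difficulties disappear because the zero initial datum is automatically admissible, and the uniform-in-$N$ bounds pass to the limit without loss.
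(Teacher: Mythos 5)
Your proof is correct, and the paper itself gives no argument for this lemma: it is simply ``recalled'' from the standard parabolic theory of \cite{Evans:1998}, which is exactly the Galerkin-plus-energy-estimate route you carry out (testing with $v$, differentiating in time and testing with $\partial_t v$ using $\partial_t v(\cdot,0)=f(\cdot,0)$, then elliptic $H^2$-regularity at frozen time and the embedding $H^2(\Omega)\hookrightarrow C(\bar\Omega)$ for $d\le 3$). Nothing to flag beyond noting that the favorable sign of the Robin boundary term and the nonnegativity of $p$, which you use to discard terms, are indeed guaranteed by the paper's standing hypotheses ($\beta>0$ and Assumption 2.1(c)).
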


\begin{lemma}\label{lemma_maximum}
We give the next parabolic model  
 \begin{equation*}
\begin{cases}
\begin{aligned}
(\partial_t-\Delta) v(x,t)+\tilde p(x)v(x,t) &=f(x,t), && (x,t) \in \Omega\times(0,T],\\
\B v(x,t) &= \tilde b(x,t), &&(x,t)\in\partial\Omega\times(0,T],\\
v(x,0)&=v_0(x),&&x\in\Omega.
\end{aligned}
\end{cases}
\end{equation*}
Let $f$, $\tilde p$, $\tilde b$ and $v_0$ be continuous and nonnegative. Then we have $v\ge 0$ a.e. on $\Omega\times(0,T]$. Also, if $v$ attains a nonpositive minimum over $\overline{\Omega}\times[0,T]$ on a point $(x_0,t_0)\in \Omega\times(0,T]$, then $v|_{\Omega\times(0,t_0]}$ is constant. 
\end{lemma}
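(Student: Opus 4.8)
The plan is to derive both assertions from the parabolic maximum principle, the only nonstandard ingredient being the Robin boundary condition $\B v=\beta\frac{\partial v}{\n}+v=\tilde b$. First I would pass to $u:=-v$; since $f\ge0$, this is a subsolution,
\[
(\partial_t-\Delta)u+\tilde p\,u=-f\le0\qquad\text{in }\Omega\times(0,T].
\]
By Lemma \ref{lemma_regularity} together with $1\le d\le3$, $v$ (hence $u$) is continuous on the compact cylinder $\overline\Omega\times[0,T]$, so the minimum $m:=\min_{\overline\Omega\times[0,T]}v$ is attained and $u$ attains the maximum $-m$.

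I would establish the second (strong) assertion first, since it feeds the first. Suppose $v$ attains a nonpositive minimum $m\le0$ at an interior point $(x_0,t_0)\in\Omega\times(0,T]$, so that $u$ attains the nonnegative maximum $-m$ there. Shifting $u\mapsto u+m$ produces a subsolution of the same type whose maximal value is $0$; because $\tilde p\ge0$ the zeroth-order term is then harmless, and the parabolic strong maximum principle propagates this maximum backward in time over the connected set $\Omega\times(0,t_0]$. Hence $u\equiv-m$, i.e.\ $v\equiv m$, on $\Omega\times(0,t_0]$, which is exactly the claimed constancy.

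For the nonnegativity I would argue by contradiction, assuming $m<0$, and classify a minimizing point $(x_0,t_0)$. If $t_0=0$ then $m=v_0(x_0)\ge0$, impossible. If $x_0\in\Omega$ and $t_0>0$, the constancy just proved gives $v\equiv m<0$ on $\Omega\times(0,t_0]$, and letting $t\to0^+$ contradicts $v_0\ge0$. Consequently every minimizing point must lie on the lateral boundary $\partial\Omega\times(0,T]$, and this is the \emph{main obstacle}: in contrast to the Dirichlet case, the lateral values of $v$ are not prescribed by the data. Here I would invoke the parabolic Hopf lemma. Fixing such a boundary minimizer $(x_0,t_0)$, the exclusion of the previous two cases forces $v>m$ at all interior points near $(x_0,t_0)$ with $t\le t_0$, while the smoothness of $\partial\Omega$ supplies the interior tangent-ball condition; Hopf then yields the strict inequality $\frac{\partial v}{\n}(x_0,t_0)<0$ for the outward normal derivative. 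On the other hand the Robin condition gives $\beta\frac{\partial v}{\n}=\tilde b-v=\tilde b-m>0$ at that point, since $\tilde b\ge0>m$; thus $\frac{\partial v}{\n}(x_0,t_0)>0$, a contradiction. Therefore $m\ge0$, and by continuity $v\ge0$ on all of $\overline\Omega\times[0,T]$.

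The only routine points left to verify are that $v$ is regular enough up to $\partial\Omega$ for Hopf's lemma to apply (interior parabolic smoothing, together with the smoothness of $\partial\Omega$ and of the data), and that connectedness of $\Omega$ identifies the backward-propagation set with $\Omega\times(0,t_0]$. The substance of the argument lies entirely in pairing Hopf's lemma with the sign forced by the Robin condition.
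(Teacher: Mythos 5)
The paper offers no proof of this lemma: it is stated as a recalled classical fact (``the regularity properties and maximum principle'') with a citation to Evans, so there is nothing in the text to compare your argument against line by line. On its own terms your proof is correct and is essentially the standard one for parabolic problems with a Robin boundary: reduce to a subsolution, use the strong maximum principle with $c=\tilde p\ge0$ for the constancy statement (the shift $u\mapsto u+m$ is harmless but unnecessary, since the maximum $-m$ of $u=-v$ is already nonnegative), and then, for nonnegativity, exclude initial and interior minimizers and kill a lateral minimizer by pairing the parabolic Hopf lemma ($\frac{\partial v}{\n}<0$ at a strict boundary minimum) against the sign forced by $\beta\frac{\partial v}{\n}=\tilde b-m>0$. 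That Hopf/Robin pairing is indeed the crux, and you have identified it correctly. The one caveat worth flagging is the regularity you defer as ``routine'': merely continuous $f$ does not give a classical $C^{2,1}$ solution with $C^1$ behaviour up to $\partial\Omega$, which Hopf's lemma needs, so strictly speaking your argument requires the slightly stronger smoothness that the paper's data do in fact possess. If one wants the nonnegativity under only the weak-solution regularity of Lemma \ref{lemma_regularity}, the cleaner route is the energy argument: test the equation with $v^-:=\min(v,0)$; the Robin boundary term $\beta^{-1}\int_{\partial\Omega}(v-\tilde b)v^-$ is nonnegative where $v<0$ because $\tilde b\ge0$, and $\int_\Omega f v^-\le0$, so $\frac{d}{dt}\|v^-\|_{L^2(\Omega)}^2\le0$ and $v^-\equiv0$. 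That avoids Hopf entirely for part one, though the strong-maximum-principle part genuinely needs the classical regularity either way.
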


Throughout the paper, we need the following assumptions to be valid. 
\begin{comment}
\begin{assumption}\label{assumption-1}
Let the initial data $a$, the boundary data $b$, and the potential $p$ satisfy the following conditions: 
 \begin{itemize}
  \item [(i)] $a\in H^2(\Omega)$, $a\ge M_1>0$ in $\Omega$, $\mathcal{B}a(x)=b(x,0)$ for all $x\in\partial\Omega$ {\color{blue}(the compatibility condition?)};
 \item [(ii)] $b\in H^2(0,T;H^{5/2}(\partial\Omega))$, $b\ge M_1>0$ and $\partial_t b\ge 0$ on $\partial \Omega\times(0,T)$; 
  \item [(iii)] $p$ is nonnegative and satisfies $p\ge \Delta g/g$ on $\Omega$. 
 \end{itemize}
\end{assumption}
\end{comment}

\begin{assumption}\label{assumption}
The boundary and initial conditions $u_0,\ b$, the potential $p$, and the exact source $q^*$ are continuous and satisfy the following conditions. 
 \begin{itemize}
  \item [(a)] $b$, $\partial_t b$ and $\partial^2_t b$ are nonnegative on $\partial \Omega\times(0,T]$, and $b$ can not be vanishing.  Also we denote the maximum of $b$, $\partial_t b$ and $\partial^2_t b$ on $\partial \Omega\times(0,T]$ by $M_b$. 
  \item [(b)] $b(x,T)>0$ on $\partial\Omega$.
   \item [(c)] $p$ is positive and satisfies $p\ge \Delta g/g$ in $\Omega$. 
   \item [(d)] $q^*\in \mathcal{Q}:=\{q\in C(\bar\Omega): 0\leq q \leq M\}$, where $M>0$ is the upper bound of the admissible set $\mathcal Q$.
 \end{itemize}
\end{assumption}

With Assumption \ref{assumption} and Lemma \ref{lemma_maximum}, we could deduce the positivity results of the solutions of equations \ref{PDE_ue} and \ref{PDE_um}.

\begin{lemma}\label{lemma_lower}
Let $v_M$ satisfy the next model: 
\begin{equation*}
\begin{cases}
\begin{aligned}
(\partial_t-\Delta) v_M(x,t)+(p(x)+M)v_M(x,t) &=0, && x \in \Omega\times(0,T],\\
\B v_M(x,t) &= b(x,t), &&x\in\partial\Omega\times(0,T],\\
v_M(x,0)&=0, &&x\in\Omega,\\
\end{aligned}
\end{cases}
\end{equation*}
where $M$ is given by Assumption \ref{assumption}. Defining $m_{\mathcal Q}:=\min_{x\in\Omega}v_M(x,T)$, we have $m_{\mathcal Q}>0$. 
\end{lemma}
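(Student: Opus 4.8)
The plan is to show that $v_M(x,T)$ is strictly positive at every interior point of $\Omega$, and then extend this to a uniform positive lower bound by exploiting continuity together with the boundary condition at $t=T$. First I would invoke Lemma 1.2 (the maximum principle) to conclude that $v_M \ge 0$ on $\overline{\Omega}\times[0,T]$: indeed, the model for $v_M$ has nonnegative source ($f\equiv 0$), nonnegative potential ($\tilde p = p + M > 0$ by Assumption (c),(d)), nonnegative boundary data (Assumption (a) gives $b\ge 0$ on $\partial\Omega\times(0,T]$), and nonnegative (zero) initial data. Hence the hypotheses of Lemma 1.2 are met and $v_M\ge 0$ throughout.

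Next I would upgrade nonnegativity to strict positivity at interior points via the strong minimum principle, the second conclusion of Lemma 1.2. Suppose for contradiction that $v_M$ attains the value $0$ (a nonpositive minimum, since $v_M\ge 0$) at some point $(x_0,t_0)$ with $x_0\in\Omega$ and $t_0\in(0,T]$. Then Lemma 1.2 forces $v_M|_{\Omega\times(0,t_0]}$ to be constant, hence identically zero on that time slab. This would propagate a contradiction against the boundary data: since $b$ cannot be vanishing (Assumption (a)) and $\B v_M = b$ on the lateral boundary, a $v_M$ that is identically zero on $\Omega\times(0,t_0]$ cannot be compatible with a nontrivial Robin trace $\beta\,\partial v_M/\partial\mathbf{n} + v_M = b \not\equiv 0$. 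Thus no interior zero can occur for $t_0>0$, giving $v_M(x,T)>0$ for every $x\in\Omega$.

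The remaining—and most delicate—step is to convert the pointwise strict positivity $v_M(\cdot,T)>0$ on the \emph{open} set $\Omega$ into a strictly positive \emph{minimum} $m_{\mathcal{Q}}=\min_{x\in\Omega}v_M(x,T)>0$, which requires control up to the boundary $\partial\Omega$. The obstacle is that a pointwise positive continuous function on a domain need not have a positive infimum if it can decay to $0$ as $x\to\partial\Omega$. Here I would use Assumption (b), namely $b(x,T)>0$ on $\partial\Omega$: by Lemma 1.1 the solution $v_M(\cdot,T)\in H^2(\Omega)$ is continuous on $\overline{\Omega}$, and its Robin boundary trace satisfies $\beta\,\partial v_M(\cdot,T)/\partial\mathbf{n} + v_M(\cdot,T) = b(\cdot,T) > 0$. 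If $v_M(x,T)$ were to vanish at a boundary point $x_*\in\partial\Omega$, then by nonnegativity $x_*$ is an interior-in-time boundary minimum, and Hopf's lemma would give $\partial v_M(x_*,T)/\partial\mathbf{n} < 0$, forcing the Robin trace $\beta\,\partial v_M/\partial\mathbf{n} + v_M$ to be negative at $x_*$, contradicting $b(x_*,T)>0$. Hence $v_M(\cdot,T)>0$ on all of $\overline{\Omega}$. Since $v_M(\cdot,T)$ is continuous on the compact set $\overline{\Omega}$, it attains a positive minimum, and therefore $m_{\mathcal{Q}}>0$.
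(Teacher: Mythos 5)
Your proposal is correct and follows essentially the same route as the paper's (much terser) proof: nonnegativity and the strong minimum principle from Lemma \ref{lemma_maximum} give $v_M(x,T)>0$ in $\Omega$, and Assumption \ref{assumption}(b) together with the continuity of $v_M(\cdot,T)$ from Lemma \ref{lemma_regularity} rules out vanishing at the boundary, so the minimum over the compact closure is positive. Your added detail (the contradiction with the nonvanishing Robin datum, and the Hopf-type argument at boundary points — where even the weak one-sided inequality $\partial v_M/\partial\mathbf{n}\le 0$ at a boundary zero already contradicts $b(\cdot,T)>0$) simply fills in steps the paper leaves implicit.
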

\begin{proof}
From Assumption \ref{assumption} and Lemma \ref{lemma_maximum}, we have $v_M(x,T)>0$ on $\Omega$. Recalling that $b(x,T)>0$ and the continuity of $v_M(\cdot,T)$, which come from Assumption \ref{assumption} and Lemma \ref{lemma_regularity}, respectively, we can deduce that $\min_{x\in\Omega} v_M(x,T)>0$. The proof is complete. 
\end{proof}

\begin{lemma}\label{lemma_positivity}
With Assumption \ref{assumption}, the $u_m$ and $u_e$ defined in models \ref{PDE_ue} and \ref{PDE_um} satisfy the following results:
\begin{itemize}
\item [(a)]$u_e\ge 0$ on $\Omega\times(0,T]$, and 
 $u_e(x,T)\ge m_{\mathcal Q}>0$ on $\Omega$.
\item [(b)]$u_m(x,T)>0$ on $\Omega$.
\item [(c)]$\partial_t u_e$ and $\partial_t^2 u_e$ are nonnegative on $\Omega\times(0,T]$.
\item [(d)]$\partial_t u_m\ge 0$ on $\Omega\times(0,T]$.
\item [(e)] $\max\limits_{(x,t)\in\Omega\times(0,T]}\{u_e,\partial_t u_e,\partial^2_t u_e\}\le M_b$.
\end{itemize}
The constants $m_{\mathcal Q}$ and $M_b$ are given by Lemma \ref{lemma_lower} and Assumption \ref{assumption}, respectively.
\end{lemma}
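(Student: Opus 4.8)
The engine of the whole lemma is the maximum principle of Lemma \ref{lemma_maximum}, applied both to the state equations and to their time-derivatives, together with two comparison arguments; I would organize the proof around these two ideas and respect the dependencies $(\mathrm a)\Rightarrow(\mathrm b)$ and $(\mathrm c)\Rightarrow(\mathrm d)$.

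First the nonnegativity statements. That $u_e\ge 0$ in (a) is immediate from Lemma \ref{lemma_maximum} applied to \ref{PDE_ue}: the potential $p+q$ is continuous and nonnegative, the source and the initial value vanish, and the Robin datum $b$ is nonnegative by Assumption \ref{assumption}(a). For (c) I would differentiate \ref{PDE_ue} once and twice in $t$. Writing $w=\partial_t u_e$, the coefficients $p,q$ being time-independent gives $(\partial_t-\Delta)w+(p+q)w=0$ with Robin datum $\B w=\partial_t b$, while evaluating \ref{PDE_ue} at $t=0$ and using $u_e(\cdot,0)\equiv 0$ forces $w(\cdot,0)=0$; since $\partial_t b\ge 0$, Lemma \ref{lemma_maximum} yields $\partial_t u_e\ge 0$. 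Repeating for $z=\partial_t^2 u_e$, whose Robin datum is $\partial_t^2 b\ge 0$ and whose initial value again vanishes, gives $\partial_t^2 u_e\ge 0$. Part (d) follows in the same spirit: $\eta=\partial_t u_m$ solves $(\partial_t-\Delta)\eta+p\eta=q\,\partial_t u_e$ with homogeneous Robin and zero initial data, and the source $q\,\partial_t u_e$ is nonnegative precisely because of (c) together with $q\ge 0$, so Lemma \ref{lemma_maximum} gives $\partial_t u_m\ge 0$.

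Next the pointwise bounds come from comparison. For the lower bound in (a), set $w=u_e-v_M$ with $v_M$ as in Lemma \ref{lemma_lower}; subtracting the equations and rewriting $(p+M)v_M=(p+q)v_M+(M-q)v_M$ gives $(\partial_t-\Delta)w+(p+q)w=(M-q)v_M$ with homogeneous Robin and initial data. Since $v_M\ge 0$ (again by Lemma \ref{lemma_maximum}) and $M-q\ge 0$ for $q\in\mathcal Q$, the right-hand side is nonnegative, so $u_e\ge v_M$ and in particular $u_e(\cdot,T)\ge v_M(\cdot,T)\ge m_{\mathcal Q}>0$. For the upper bounds (e) I would compare each of $u_e,\partial_t u_e,\partial_t^2 u_e$ with the constant $M_b$: for instance $\phi=M_b-u_e$ satisfies $(\partial_t-\Delta)\phi+(p+q)\phi=(p+q)M_b\ge 0$, and because the Robin operator sends the constant $M_b$ to itself the boundary datum is $M_b-b\ge 0$ while $\phi(\cdot,0)=M_b\ge 0$, so $u_e\le M_b$; the identical computation with $\partial_t b$ and $\partial_t^2 b$ in place of $b$ bounds $\partial_t u_e$ and $\partial_t^2 u_e$ by $M_b$ through the definition of $M_b$.

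It remains to prove the strict positivity $u_m(\cdot,T)>0$ in (b), which I expect to be the delicate point. Nonnegativity $u_m\ge 0$ follows from Lemma \ref{lemma_maximum} since the source $q\,u_e$ is nonnegative by (a). To upgrade to strict positivity I would argue by contradiction using the strong maximum principle: if $u_m(x_0,T)=0$ at some interior $x_0$, this is a nonpositive minimum attained at $(x_0,T)\in\Omega\times(0,T]$, so $u_m$ is constant (hence identically zero) on $\Omega\times(0,T]$; feeding this back into \ref{PDE_um} forces $q\,u_e\equiv 0$, but $u_e(\cdot,T)\ge m_{\mathcal Q}>0$ and the continuity from Lemma \ref{lemma_regularity} keep $u_e>0$ on a strip $\Omega\times(T-\delta,T]$, so $q\equiv 0$, contradicting the nondegeneracy of the exact source (a vanishing $q^*$ would make $g\equiv 0$ and the inverse problem \ref{IP-continuous} vacuous). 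The two technical points I would be careful about sit exactly here: justifying that the state has enough time regularity and compatibility for the differentiated problems in (c)--(d) to be posed with the stated zero initial data, which I would draw from parabolic smoothing under the smoothness of $b,\partial_t b,\partial_t^2 b$ in Assumption \ref{assumption}, and making explicit the nondegeneracy hypothesis on $q^*$ that makes the strong maximum principle bite in (b).
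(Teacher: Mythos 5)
Your proof follows essentially the same route as the paper's: Lemma \ref{lemma_maximum} applied to the state equations and to their time derivatives for the nonnegativity claims in (a), (c), (d), comparison with $v_M$ from Lemma \ref{lemma_lower} for the lower bound in (a), and comparison with the constant $M_b$ for the upper bounds in (e). Your treatment of (b) is in fact slightly more careful than the paper's, which passes from $u_m\equiv 0$ directly to ``$u_e\equiv 0$'' rather than to $q\equiv 0$; you correctly observe that the contradiction really requires $q^*\not\equiv 0$, a nondegeneracy hypothesis the paper leaves implicit.
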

\begin{proof}
 For statement $(a)$, obviously $u_e\ge 0$ on $\Omega\times(0,T]$ from Lemma \ref{lemma_maximum}. 
 We set $v=u_e-v_M$, where $u_e$ and $v_M$ are introduced in equation \ref{PDE_ue} and Lemma \ref{lemma_lower}, respectively. Then we could deduce the model for $v$ as 
 \begin{equation*}
\begin{cases}
\begin{aligned}
(\partial_t-\Delta) v(x,t)+(p+q^*)v(x,t) &=(M-q^*)v_M\ge 0, && x \in \Omega\times(0,T],\\
\B v(x,t) &=0, &&x\in\partial\Omega\times(0,T],\\
v(x,0)&=0, &&x\in\Omega,\\
\end{aligned}
\end{cases}
\end{equation*}
where the non-negativity of $v_M$ follows from Lemma \ref{lemma_maximum} straightforwardly. Using Lemma \ref{lemma_maximum} again, we have $v=u_e-v_M\ge 0$, which gives $u_e(x,T)\ge v_M(x,T)\ge  m_{\mathcal Q}>0$.

For statement $(b)$, Lemma \ref{lemma_maximum} leads to $u_m(x,T)\ge 0$ on $\Omega$. Assume that $u_m(x_0,T)=0$ with $x_0\in\Omega$. Then Lemma \ref{lemma_maximum} yields that 
$u_m\equiv 0$, which gives $u_e\equiv 0$. This is a contradiction. Hence, $u_m(x,T)>0$ on $\Omega$. 
 
For statement $(c)$, setting $v=\partial_t u_e$, it satisfies 
 \begin{equation*}
\begin{cases}
\begin{aligned}
(\partial_t-\Delta) v+(p(x)+q^*(x))v &=0, && (x,t) \in \Omega\times(0,T],\\
\B v(x,t) &=\partial_tb, && (x,t)\in\partial\Omega\times(0,T],\\
v(x,0)&=0,&&x\in\Omega.
\end{aligned}
\end{cases}
\end{equation*}
 From Assumption \ref{assumption} and Lemma \ref{lemma_maximum}, we have the nonnegativity of $v$. The above arguments could also give the nonnegativity of $\partial^2_t u_e$. 
 
 The proof of statement $(d)$ follows from the one of statement $(c)$. 
 
 For statement $(e)$, setting $v=M_b-u_e$, it satisfies 
 \begin{equation*}
\begin{cases}
\begin{aligned}
(\partial_t-\Delta) v+(p(x)+q^*(x))v &=(p(x)+q^*(x))M_b\ge 0, && x \in \Omega\times(0,T],\\
\B v(x,t) &= M_b-b(x,t)\ge 0, &&x\in\partial\Omega\times(0,T],\\
v(x,0)&=M_b\ge 0, &&x\in\Omega.
\end{aligned}
\end{cases}
\end{equation*}
We have that $v=M_b-u_e\ge 0$ on $\Omega\times(0,T]$ immediately from Lemma \ref{lemma_maximum}. 
Similarly, if we set $v=M_b-\partial_t u_e$, then it holds that 
\begin{equation*}
\begin{cases}
\begin{aligned}
(\partial_t-\Delta) v+(p(x)+q^*(x))v &=(p(x)+q^*(x))M_b\ge 0, && x \in \Omega\times(0,T],\\
\B v(x,t) &= M_b-\partial_tb\ge 0, &&x\in\partial\Omega\times(0,T],\\
v(x,0)&=M_b\ge 0, &&x\in\Omega.
\end{aligned}
\end{cases}
\end{equation*}
Lemma \ref{lemma_maximum} leads to $M_b-\partial_t u_e\ge 0$ on $\Omega\times(0,T]$. At last, the model of $v=M_b-\partial ^2_tu_e$ would be given as 
\begin{equation*}
\begin{cases}
\begin{aligned}
(\partial_t-\Delta) v+(p(x)+q^*(x))v &=(p(x)+q^*(x))M_b\ge 0, && x \in \Omega\times(0,T],\\
\B v(x,t) &= M_b-\partial_t^2b\ge 0, &&x\in\partial\Omega\times(0,T],\\
v(x,0)&=M_b\ge 0, &&x\in\Omega.
\end{aligned}
\end{cases}
\end{equation*}
The similar arguments lead to $M_b-\partial ^2_tu_e\ge 0$ on $\Omega\times(0,T]$.

 The proof is complete.   
\end{proof}

\subsection{Operator $K$ and the monotonicity.}
The reconstruction of the inverse problem relies on the operator $K$, which is defined as 
\begin{equation}\label{operator}
 K q=\frac{\partial_t u_m(x,T;q)-\Delta g(x)+p(x)g(x)}{u_e(x,T;q)},
\end{equation}
with domain 
\begin{equation}\label{domain}
 \mathcal D=\{q\in C(\Omega): M\ge q(x)\ge (-\Delta g(x)+p(x)g(x))/u_e(x,T;0) \}.
\end{equation}
\begin{remark}
 Lemma \ref{lemma_positivity} gives the positivity of $u_e(x,T;q)$, which ensures the well-definedness of operator $K$. The constant $M$ in \ref{domain} comes from Assumption \ref{assumption}. We can set it sufficiently large to support the well-definedness of $\mathcal D$.  
 \end{remark}

 The next lemma contains the equivalence between the fixed point of $K$ and the solution of the inverse problem. 
\begin{lemma}\label{lemma_equivalence}
 Recalling the equation \ref{PDE_um} for $u_m$ and the operator $K$ in \ref{operator}, the next two statements are equivalent:
\begin{itemize}
\item [(i)] $q$ is one fixed point of $K$;
\item [(ii)] $q$ satisfies $u_m(x,T;q)=g(x)$. 
\end{itemize}
\end{lemma}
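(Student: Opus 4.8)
The plan is to derive both implications from a single pointwise identity obtained by evaluating the governing equation \ref{PDE_um} for $u_m$ at the terminal time $t=T$, namely
\begin{equation*}
\partial_t u_m(x,T;q)-\Delta u_m(x,T;q)+p(x)u_m(x,T;q)=q(x)u_e(x,T;q),\quad x\in\Omega.
\end{equation*}
This terminal relation is the common backbone of the equivalence. Throughout I would use the strict positivity $u_e(x,T;q)\ge m_{\mathcal Q}>0$ from Lemma \ref{lemma_positivity}, which legitimizes dividing by $u_e(x,T;q)$ and is exactly what makes the operator $K$ in \ref{operator} well defined.

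For the implication $(ii)\Rightarrow(i)$, I would substitute the assumption $u_m(x,T;q)=g(x)$ into the terminal identity. Then $\Delta u_m(x,T;q)=\Delta g$ and $p\,u_m(x,T;q)=pg$, so the identity becomes $\partial_t u_m(x,T;q)-\Delta g+pg=q(x)u_e(x,T;q)$. Dividing by $u_e(x,T;q)>0$ reproduces precisely the definition \ref{operator}, giving $Kq=q$; thus $q$ is a fixed point.

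For the converse $(i)\Rightarrow(ii)$, I would start from the fixed-point relation $q(x)u_e(x,T;q)=\partial_t u_m(x,T;q)-\Delta g+pg$ and subtract it from the terminal identity. The two copies of $\partial_t u_m(x,T;q)$ and of $q\,u_e(x,T;q)$ cancel, leaving the homogeneous elliptic equation $-\Delta w+p\,w=0$ in $\Omega$ for the difference $w:=u_m(x,T;q)-g$. To close the argument I must attach boundary data: since $u_m(\cdot,\cdot;q)$ solves \ref{PDE_um}, it satisfies $\B u_m(\cdot,T;q)=0$, and since the exact data obey $g=u_m(\cdot,T;q^*)$ it likewise holds that $\B g=0$; hence $\B w=0$ on $\partial\Omega$. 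Uniqueness for the Robin elliptic problem then forces $w\equiv0$: testing $-\Delta w+pw=0$ with $w$, integrating by parts, and using $\B w=\beta\,\partial w/\partial\bn+w=0$ to replace the normal derivative yields
\begin{equation*}
\int_\Omega|\nabla w|^2\,dx+\int_\Omega p\,w^2\,dx+\frac1\beta\int_{\partial\Omega}w^2\,ds=0,
\end{equation*}
and because $p>0$ and $\beta>0$ every term is nonnegative, so $w=0$, i.e. $u_m(x,T;q)=g(x)$.

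I expect the only genuinely delicate point to be the boundary condition $\B w=0$ in the converse direction. The cancellation yields a clean interior equation for free, but the conclusion $w\equiv0$ is vacuous without matching homogeneous boundary data, so I would make explicit that both $u_m(\cdot,T;q)$ and the exact datum $g$ inherit the homogeneous Robin condition from \ref{PDE_um}. Everything else reduces to algebraic manipulation of the terminal identity together with the strict positivity of $u_e(x,T;q)$ and the coercivity of the Robin bilinear form.
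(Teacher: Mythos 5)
Your proposal is correct and follows essentially the same route as the paper: both directions rest on the terminal-time identity from \ref{PDE_um}, and the converse reduces to the homogeneous Robin elliptic problem $-\Delta w+pw=0$, $\B w=0$ for $w=u_m(x,T;q)-g$, whose only solution is zero. You merely make explicit two points the paper leaves implicit — that $g$ inherits the homogeneous Robin condition because it is itself terminal data of \ref{PDE_um}, and the energy/coercivity argument forcing $w\equiv0$ — which is a welcome addition rather than a deviation.
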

\begin{proof}
 Assuming $u_m(x,T;q)=g(x)$, it is not hard to see that $q$ is one fixed point of $K$.  
 
 Let us prove conversely. If $q$ is the fixed point, then we have 
 \begin{equation*}
\begin{cases}
\begin{aligned}
-\Delta v(x)+p(x)v(x) &=0, && x\in \Omega,\\
\B v(x) &=0, &&x\in\partial\Omega,
\end{aligned}
\end{cases}
\end{equation*}
where $v(x)=g(x)-u_m(x,T;q)$. We could see that the solution of the above model should be vanishing, i.e.  $u_m(x,T;q)=g(x).$ The proof is complete. 
\end{proof}

The next lemma is the monotonicity of operator $K$. 
\begin{lemma}\label{lemma_monotone}
 Given $q_1,q_2\in \mathcal D$ with $q_1\leq q_2$, we have $Kq_1\le Kq_2.$
\end{lemma}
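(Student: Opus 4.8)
The plan is to fix $q_1\le q_2$ in $\mathcal D$ and write $Kq_i=N_i/D_i$ for $i=1,2$, where
\[
N_i:=\partial_t u_m(\cdot,T;q_i)-\Delta g+pg,\qquad D_i:=u_e(\cdot,T;q_i).
\]
Since $D_i=u_e(\cdot,T;q_i)\ge m_{\mathcal Q}>0$ by Lemma \ref{lemma_positivity}(a), the claimed inequality $Kq_1\le Kq_2$ is equivalent to the pointwise inequality $N_2D_1-N_1D_2\ge0$ on $\Omega$. I would prove the latter through the algebraic identity
\[
N_2D_1-N_1D_2=(N_2-N_1)\,D_1+N_1\,(D_1-D_2),
\]
so that it suffices to verify that each of the four factors $N_2-N_1$, $D_1$, $N_1$ and $D_1-D_2$ is nonnegative.

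The two factors built from $u_e$ come from monotonicity in $q$. Subtracting the equations \ref{PDE_ue} for $q_1$ and $q_2$, the difference $v:=u_e(\cdot;q_1)-u_e(\cdot;q_2)$ solves $(\partial_t-\Delta)v+(p+q_1)v=(q_2-q_1)u_e(\cdot;q_2)$ with homogeneous Robin data and zero initial value; the right-hand side is nonnegative by Lemma \ref{lemma_positivity}(a), so Lemma \ref{lemma_maximum} gives $v\ge0$, hence $D_1-D_2=u_e(\cdot,T;q_1)-u_e(\cdot,T;q_2)\ge0$. Differentiating \ref{PDE_ue} in time, $\partial_t u_e(\cdot;q_i)$ solves the same equation with boundary datum $\partial_t b$ and zero initial value (the latter because $u_e(\cdot,0)=0$), so the same subtraction shows $\eta:=\partial_t u_e(\cdot;q_1)-\partial_t u_e(\cdot;q_2)$ satisfies $(\partial_t-\Delta)\eta+(p+q_1)\eta=(q_2-q_1)\partial_t u_e(\cdot;q_2)\ge0$, using the nonnegativity of $\partial_t u_e$ from Lemma \ref{lemma_positivity}(c), again with zero boundary and initial data; Lemma \ref{lemma_maximum} then yields $\eta\ge0$, i.e. $\partial_t u_e(\cdot,T;q_1)\ge\partial_t u_e(\cdot,T;q_2)$. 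The regularity needed to apply the maximum principle to the time-differentiated systems is furnished by Lemma \ref{lemma_regularity}.

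The decisive factor is $N_2-N_1=\partial_t u_m(\cdot,T;q_2)-\partial_t u_m(\cdot,T;q_1)$, because $-\Delta g+pg$ does not depend on $q$. A direct comparison on $u_m$ is inconclusive, since the difference would solve a parabolic problem with source $q_2u_e(\cdot;q_2)-q_1u_e(\cdot;q_1)$, whose sign is unclear as $q$ increases while $u_e$ decreases. I would bypass this by summing \ref{PDE_ue} and \ref{PDE_um}: the function $w_0:=u_e+u_m$ satisfies $(\partial_t-\Delta)w_0+pw_0=0$ with $\B w_0=b$ and $w_0(\cdot,0)=0$, a problem in which $q$ is absent. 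Consequently $w_0$, and in particular $\partial_t w_0(\cdot,T)$, is independent of $q$, and $\partial_t u_m(\cdot,T;q)=\partial_t w_0(\cdot,T)-\partial_t u_e(\cdot,T;q)$. Together with the previous step this gives $N_2-N_1=\partial_t u_e(\cdot,T;q_1)-\partial_t u_e(\cdot,T;q_2)=\eta(\cdot,T)\ge0$.

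Finally I would sign $N_1$: Lemma \ref{lemma_positivity}(d) gives $\partial_t u_m(\cdot,T;q_1)\ge0$, while $-\Delta g+pg\ge0$ follows from Assumption \ref{assumption}(c) combined with $g=u_m(\cdot,T)>0$ from Lemma \ref{lemma_positivity}(b); hence $N_1\ge0$. With all four factors nonnegative, the identity above yields $N_2D_1-N_1D_2\ge0$ and therefore $Kq_1\le Kq_2$. The main obstacle is precisely the monotonicity of the emission field's time derivative $\partial_t u_m$ in $q$, and I expect the key device to be the $q$-independence of $w_0=u_e+u_m$, which converts this into the already-established monotonicity of $\partial_t u_e$.
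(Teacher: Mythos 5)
Your proposal is correct and follows essentially the same route as the paper: the same two-term decomposition of $Kq_2-Kq_1$ (difference of $\partial_t u_m$ plus a cross term in the difference of $u_e$), the same maximum-principle comparisons for $u_e(\cdot;q_1)-u_e(\cdot;q_2)$ and its time derivative, and the same key observation that $u_e+u_m$ is independent of $q$ (which the paper phrases as $w=-v$ by comparing the equations \ref{um1-um2} and \ref{ue1-ue2}) to convert the sign of $\partial_t u_m(\cdot,T;q_2)-\partial_t u_m(\cdot,T;q_1)$ into the already-established monotonicity of $\partial_t u_e$.
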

\begin{proof}
 From the definition \ref{operator} of $K$, we have 
 \begin{align*}
  &Kq_1-Kq_2\\
  &=u_e^{-1}(x,T;q_1)(\partial_tu_m(x,T;q_1)-\partial_tu_m(x,T;q_2))\\
  &\quad +u_e^{-1}(x,T;q_1)u_e^{-1}(x,T;q_2)(\partial_tu_m(x,T;q_2)-\Delta g+pg)(u_e(x,T;q_2)-u_e(x,T;q_1))\\
  &=:I_1+I_2.
 \end{align*}
 
For $I_2$, we let $v=u_e(x,t;q_1)-u_e(x,t;q_2)$. From model \ref{PDE_ue}, we see that 
\begin{equation}\label{equation1}
\begin{cases}
\begin{aligned}
(\partial_t-\Delta) v+(p+q_2)v &=(q_2-q_1)u_e(x,t;q_1)\ge0, && (x,t) \in \Omega\times(0,T],\\
\B v &= 0, &&(x,t)\in\partial\Omega\times(0,T],\\
v(x,0)&=0, &&x\in\Omega.
\end{aligned}
\end{cases}
\end{equation}
Applying Lemma \ref{lemma_positivity} on the above model leads to $v(x)\ge 0$. Also, Lemma \ref{lemma_positivity} and Assumption \ref{assumption} give that 
$$\partial_tu_m(x,T;q_2)-\Delta g+pg\ge 0,\ u_e(x,T;q_1)>0,\ u_e(x,T;q_2)>0.$$ 
Hence, we prove that $I_2 \le 0$.  

For $I_1$, if we set $w=u_m(x,t;q_1)-u_m(x,t;q_2)$, then it satisfies 
\begin{equation}\label{um1-um2}
\begin{cases}
\begin{aligned}
(\partial_t-\Delta) w+p(x)w &=q_1u_e(x,t;q_1)-q_2u_e(x,t;q_2), && (x,t) \in \Omega\times(0,T],\\
\B w &=0, &&(x,t)\in\partial\Omega\times(0,T],\\
w(x,0)&=0,&&x\in\Omega.
\end{aligned}
\end{cases}
\end{equation}
Actually, the model \ref{equation1} leads to 
\begin{equation}\label{ue1-ue2}
\begin{cases}
\begin{aligned}
(\partial_t-\Delta) v+pv &=q_2u_e(x,t;q_2)-q_1u_e(x,t;q_1), && (x,t) \in \Omega\times(0,T],\\
\B v &= 0, &&(x,t)\in\partial\Omega\times(0,T],\\
v(x,0)&=0, &&x\in\Omega.
\end{aligned}
\end{cases}
\end{equation}
So we see that $w=-v$. If we set $z=\partial_t v$, then we have 
\begin{equation}\label{derivative-ue1-ue2}
\begin{cases}
\begin{aligned}
(\partial_t-\Delta)z+(p+q_2)z &=(q_2-q_1)\partial_tu_e(x,t;q_1)\ge0, && (x,t) \in \Omega\times(0,T],\\
\B z &= 0, &&(x,t)\in\partial\Omega\times(0,T],\\
z(x,0)&=0,&&x\in\Omega,
\end{aligned}
\end{cases}
\end{equation}
which together with Lemma \ref{lemma_positivity} gives $z(x,t)\ge 0$. So we have $\partial_t w(x,T)\le 0$, which leads to $I_1\le 0$. 

Now we have proved $Kq_1-Kq_2\le 0$. The proof is complete. 
\end{proof}

\subsection{Iteration and uniqueness of inverse problem.}
To prove the uniqueness, we firstly give the next two lemmas.
\begin{lemma}\label{lemma_uniqueness1}
 If $q_1,q_2\in\mathcal D$ are both fixed points of $K$ with $q_1\le q_2$, then 
 $q_1=q_2$.  
\end{lemma}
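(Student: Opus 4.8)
The plan is to leverage two facts already assembled earlier: the equivalence in Lemma \ref{lemma_equivalence} between fixed points of $K$ and solutions of the inverse problem, and the identity $w=-v$ relating the differences of the emission and excitation fields that was derived inside the proof of Lemma \ref{lemma_monotone}. Combining these will reduce the uniqueness claim to a single vanishing argument driven by the strong maximum principle of Lemma \ref{lemma_maximum}.

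First I would record that, since $q_1$ and $q_2$ are both fixed points of $K$, Lemma \ref{lemma_equivalence} yields $u_m(x,T;q_1)=g(x)=u_m(x,T;q_2)$ on $\Omega$. Setting $v=u_e(\cdot,\cdot;q_1)-u_e(\cdot,\cdot;q_2)$ and $w=u_m(\cdot,\cdot;q_1)-u_m(\cdot,\cdot;q_2)$ exactly as in the proof of Lemma \ref{lemma_monotone}, the relation $w=-v$ established there (by adding the equations \ref{um1-um2} and \ref{ue1-ue2} and using the vanishing boundary and initial data) gives $v(x,T)=-w(x,T)=0$ on $\Omega$. Next, because $v$ solves the parabolic problem \ref{equation1} whose right-hand side $(q_2-q_1)u_e(\cdot,\cdot;q_1)$ is nonnegative (using $q_1\le q_2$ together with the positivity of $u_e$ in Lemma \ref{lemma_positivity}) and whose boundary and initial data are homogeneous, Lemma \ref{lemma_maximum} forces $v\ge 0$ on $\Omega\times(0,T]$; by the $H^2$-continuity from Lemma \ref{lemma_regularity} this extends to $\overline{\Omega}\times[0,T]$.

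The decisive step is to upgrade the terminal identity $v(x,T)=0$ to $v\equiv 0$ on the whole time strip. Since $v$ is nonnegative and vanishes at every point $(x,T)$ with $x\in\Omega$, it attains its minimum value $0$ over $\overline{\Omega}\times[0,T]$ at interior-time points $(x_0,T)\in\Omega\times(0,T]$; the second assertion of Lemma \ref{lemma_maximum} (the strong maximum principle) then makes $v$ constant on $\Omega\times(0,T]$, and that constant must be $0$. Substituting $v\equiv 0$ back into \ref{equation1} collapses its left-hand side, leaving $(q_2-q_1)\,u_e(x,t;q_1)=0$ on $\Omega\times(0,T]$. Evaluating at $t=T$ and using the strict lower bound $u_e(x,T;q_1)\ge m_{\mathcal Q}>0$ from Lemma \ref{lemma_positivity}, I obtain $q_2(x)=q_1(x)$ on $\Omega$, and hence on $\bar\Omega$ by continuity, which is the claim.

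The main obstacle I anticipate is precisely this propagation from a pointwise terminal condition to identical vanishing in time: the equality $v(x,T)=0$ by itself says nothing about $q_2-q_1$, since it does not control $\partial_t v$ or $\Delta v$ at the terminal time. Only after the strong maximum principle propagates the vanishing backward does the PDE \ref{equation1} become exploitable, and only then does the strict positivity of $u_e$ deliver $q_1=q_2$. One should check that the hypotheses of Lemma \ref{lemma_maximum} are met (continuity and nonnegativity of the data, nonnegativity of the zeroth-order coefficient $p+q_2$), which is routine given Assumption \ref{assumption} and Lemma \ref{lemma_positivity}.
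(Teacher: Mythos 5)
Your proof is correct, and it reaches the conclusion by a slightly different mechanism than the paper at the one decisive step. Both arguments begin identically: Lemma \ref{lemma_equivalence} gives $u_m(x,T;q_1)=u_m(x,T;q_2)=g$, the identity $w=-v$ from the proof of Lemma \ref{lemma_monotone} converts this into $v(x,T)=0$ for $v=u_e(\cdot,\cdot;q_1)-u_e(\cdot,\cdot;q_2)$, and both end by reading $(q_2-q_1)u_e(x,t;q_1)=0$ off the PDE \ref{equation1} and invoking the strict positivity $u_e(x,T;q_1)\ge m_{\mathcal Q}>0$. Where you diverge is in upgrading $v(x,T)=0$ to $v\equiv 0$: the paper reuses the auxiliary equation \ref{derivative-ue1-ue2} to get $\partial_t v\ge 0$ and then argues that a time-nondecreasing function with equal values at $t=0$ and $t=T$ must vanish identically; you instead observe that the nonnegative supersolution $v$ attains its (nonpositive) minimum $0$ at interior points $(x_0,T)$ and apply the strong minimum principle stated as the second assertion of Lemma \ref{lemma_maximum}. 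Your route is marginally more economical --- it needs only $u_e\ge 0$ rather than $\partial_t u_e\ge 0$, so it does not lean on the monotonicity of the boundary data $\partial_t b\ge 0$ at this point --- while the paper's route is more elementary once $\partial_t v\ge 0$ is available, since it avoids the qualitative strong maximum principle. Both tools are already established in the paper (the strong form of Lemma \ref{lemma_maximum} is used in the proof of Lemma \ref{lemma_positivity}(b)), so either argument closes the lemma; your checklist of hypotheses for Lemma \ref{lemma_maximum} (continuity and nonnegativity of the source $(q_2-q_1)u_e(\cdot,\cdot;q_1)$, of the zeroth-order coefficient $p+q_2$, and of the homogeneous boundary and initial data) is complete.
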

\begin{proof}
 With Lemma \ref{lemma_equivalence}, we have $u_m(x,T;q_1)=u_m(x,T;q_2)=g(x)$. The proof of Lemma \ref{lemma_monotone} gives that $u_m(x,t;q_1)-u_m(x,t;q_2)=u_e(x,t;q_2)-u_e(x,t;q_1)$. Hence, $u_e(x,T;q_2)-u_e(x,T;q_1)=0$. Setting $v=u_e(x,t;q_1)-u_e(x,t;q_2)$ and recalling the model \ref{equation1}, we have 
 \begin{equation*}
 \begin{cases}
\begin{aligned}
(\partial_t-\Delta)v+(p+q_2)v &=(q_2-q_1)u_e(x,t;q_1)\ge0, && (x,t) \in \Omega\times(0,T],\\
\B v &= 0, &&(x,t)\in\partial\Omega\times(0,T],\\
v(x,0)&=v(x,T)=0,&&x\in\Omega.
\end{aligned}
\end{cases}
\end{equation*}
 In the proof of Lemma \ref{lemma_monotone} we have proved that $z=\partial_t v\ge 0$. The results $v(x,0)=v(x,T)=0$ and $\partial_t v\ge 0$ give that $v=0$ a.e. on $\Omega\times[0,T]$. So we have $(q_2-q_1)u_e(x,t;q_1)=0$. With the fact that $u_e(x,T;q_1)>0$ on $\Omega$, we can deduce that $q_1=q_2$ and complete the proof.  
\end{proof}

\begin{lemma}\label{lemma_operator_stability}
Given $q_1,q_2\in \mathcal D$, we have 
\begin{equation*}
 \|Kq_1-Kq_2\|_{L^2(\Omega)}\le C\|q_1-q_2\|_{L^2(\Omega)}.
\end{equation*}
\end{lemma}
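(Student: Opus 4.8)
The plan is to reuse the algebraic decomposition $Kq_1-Kq_2=I_1+I_2$ already derived in the proof of Lemma \ref{lemma_monotone}, where $I_1=u_e^{-1}(x,T;q_1)\big(\partial_t u_m(x,T;q_1)-\partial_t u_m(x,T;q_2)\big)$ and $I_2=u_e^{-1}(x,T;q_1)u_e^{-1}(x,T;q_2)\big(\partial_t u_m(x,T;q_2)-\Delta g+pg\big)\big(u_e(x,T;q_2)-u_e(x,T;q_1)\big)$, and to bound $\|I_1\|_{L^2(\Omega)}$ and $\|I_2\|_{L^2(\Omega)}$ separately. From Lemma \ref{lemma_positivity} I have the uniform lower bound $u_e(x,T;q)\ge m_{\mathcal Q}>0$, which controls the reciprocals $u_e^{-1}(x,T;q_i)\le m_{\mathcal Q}^{-1}$, together with the uniform upper bounds $u_e,\partial_t u_e\le M_b$. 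These are exactly the ingredients needed to convert the factors involving $u_e$ and $u_m$ into multiples of $\|q_1-q_2\|_{L^2(\Omega)}$.

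The core analytic estimate is a difference bound for the excitation fields. Setting $v=u_e(\cdot,\cdot;q_1)-u_e(\cdot,\cdot;q_2)$, equation \ref{equation1} shows that $v$ solves the parabolic model of Lemma \ref{lemma_regularity} with source $f=(q_2-q_1)u_e(\cdot,\cdot;q_1)$ and potential $p+q_2$. Since $q_2-q_1$ is time-independent and $u_e,\partial_t u_e\le M_b$, one has $\|f(\cdot,t)\|_{L^2(\Omega)}\le M_b\|q_1-q_2\|_{L^2(\Omega)}$ and $\|\partial_t f(\cdot,t)\|_{L^2(\Omega)}=\|(q_2-q_1)\partial_t u_e(\cdot,t)\|_{L^2(\Omega)}\le M_b\|q_1-q_2\|_{L^2(\Omega)}$, whence $\|f\|_{H^1(0,T;L^2(\Omega))}\le C\|q_1-q_2\|_{L^2(\Omega)}$. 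Lemma \ref{lemma_regularity} then yields $\max_{t\in[0,T]}\big(\|v(\cdot,t)\|_{H^2(\Omega)}+\|\partial_t v(\cdot,t)\|_{L^2(\Omega)}\big)\le C\|q_1-q_2\|_{L^2(\Omega)}$, which is the single fact driving the whole estimate.

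With this in hand the two pieces follow. For $I_1$, the identity $w:=u_m(\cdot,\cdot;q_1)-u_m(\cdot,\cdot;q_2)=-v$ from the proof of Lemma \ref{lemma_monotone} gives $\partial_t u_m(x,T;q_1)-\partial_t u_m(x,T;q_2)=-\partial_t v(x,T)$, so $\|I_1\|_{L^2(\Omega)}\le m_{\mathcal Q}^{-1}\|\partial_t v(\cdot,T)\|_{L^2(\Omega)}\le C\|q_1-q_2\|_{L^2(\Omega)}$. For $I_2$ I would estimate the product by H\"older, placing $u_e(x,T;q_2)-u_e(x,T;q_1)=-v(\cdot,T)$ in $L^\infty(\Omega)$ and the remaining factor in $L^2(\Omega)$: the Sobolev embedding $H^2(\Omega)\hookrightarrow L^\infty(\Omega)$ valid for $1\le d\le 3$ (noted in Lemma \ref{lemma_regularity}) gives $\|v(\cdot,T)\|_{L^\infty(\Omega)}\le C\|v(\cdot,T)\|_{H^2(\Omega)}\le C\|q_1-q_2\|_{L^2(\Omega)}$, while the numerator $\partial_t u_m(\cdot,T;q_2)-\Delta g+pg$ is bounded in $L^2(\Omega)$ by a constant independent of $q_1,q_2$, since $g\in H^2(\Omega)$ is fixed data, $p$ is continuous, and $\|\partial_t u_m(\cdot,T;q_2)\|_{L^2(\Omega)}\le C\|q_2 u_e(\cdot,\cdot;q_2)\|_{H^1(0,T;L^2(\Omega))}\le CMM_b$ by Lemma \ref{lemma_regularity} and $q_2\le M$. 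Combining and using the triangle inequality finishes the proof. The only nonroutine choice, and the main obstacle, lies precisely in $I_2$: the factor $\partial_t u_m(\cdot,T;q_2)$ is controlled only in $L^2$ and not in $L^\infty$, so it cannot be pulled out in the sup norm; the resolution is to absorb the smoothness into $v(\cdot,T)$ instead, exploiting the full $H^2$ regularity of the excitation difference and the low dimension $d\le 3$, while checking that every constant stays uniform over $q_1,q_2\in\mathcal D$ via the bounds $q\le M$ and $u_e,\partial_t u_e\le M_b$.
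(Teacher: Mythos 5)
Your proposal is correct and follows essentially the same route as the paper: the same decomposition $Kq_1-Kq_2=I_1+I_2$ from Lemma \ref{lemma_monotone}, the same key bound $\|(q_1-q_2)u_e(\cdot,\cdot;q_1)\|_{H^1(0,T;L^2(\Omega))}\le C\|q_1-q_2\|_{L^2(\Omega)}$ via Lemma \ref{lemma_positivity}(e), the identity $w=-v$, and Lemma \ref{lemma_regularity} applied to \ref{equation1}. Your explicit H\"older/Sobolev-embedding treatment of the product in $I_2$ merely spells out a step the paper leaves implicit.
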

\begin{proof}
 From the proof of Lemma \ref{lemma_monotone}, we have $Kq_1-Kq_2=I_1+I_2$, where 
 \begin{align*}
  I_1&=\frac{\partial_tu_m(x,T;q_1)-\partial_tu_m(x,T;q_2)}{u_e(x,T;q_1)},\\
  I_2&=\frac{(\partial_tu_m(x,T;q_2)-\Delta g+pg)(u_e(x,T;q_2)-u_e(x,T;q_1))}{u_e(x,T;q_1)u_e(x,T;q_2)}.
 \end{align*}
Lemma \ref{lemma_lower} gives the positive lower bound of $u_e(x,T;q_1)$ and $u_e(x,T;q_2)$, which is independent of the choice of $q_1$ and $q_2$. Hence we could estimate $I_1$ and $I_2$ as follows. 

For $I_2$, from Lemma \ref{lemma_regularity} and model \ref{equation1} we have 
\begin{equation*}
 \|I_2\|_{L^2(\Omega)}\le C\|(q_1-q_2)u_e(x,t;q_1)\|_{H^1(0,T;L^2(\Omega))}.
\end{equation*}
From Lemma \ref{lemma_positivity} $(e)$, we see that 
$$\|(q_1-q_2)u_e(x,t;q_1)\|_{H^1(0,T;L^2(\Omega))}\le C\|q_1-q_2\|_{L^2(\Omega)},$$ 
where the constant $C$ depends on $M_b$ and $T$. So $\|I_2\|_{L^2(\Omega)}\le C\|q_1-q_2\|_{L^2(\Omega)}$.

For $I_1$, we have that 
\begin{equation*}
 \|I_1\|_{L^2(\Omega)}\le C\|\partial_tu_m(x,T;q_1)-\partial_tu_m(x,T;q_2)\|_{L^2(\Omega)}.
\end{equation*}
From the proof of Lemma \ref{lemma_monotone}, we have 
$$u_e(x,t;q_1)-u_e(x,t;q_2)=u_m(x,t;q_2)-u_m(x,t;q_1).$$
Hence, with Lemma \ref{lemma_regularity} and model \ref{equation1}, it holds that 
\begin{align*}
 \|\partial_tu_m(x,T;q_1)-\partial_tu_m(x,T;q_2)\|_{L^2(\Omega)}
 &= \|\partial_tu_e(x,T;q_1)-\partial_tu_e(x,T;q_2)\|_{L^2(\Omega)}\\
 &\le C\|(q_1-q_2)u_e(x,t;q_1)\|_{H^1(0,T;L^2(\Omega))}\\
 &\le C\|q_1-q_2\|_{L^2(\Omega)}.
\end{align*}
Here the proof for $I_2$ is used. So $\|I_1\|_{L^2(\Omega)}\le C\|q_1-q_2\|_{L^2(\Omega)}$.

Combining the estimates for $I_1$ and $I_2$, we obtain the desired result and complete the proof. 
\end{proof}

Now we could state the uniqueness theorem, whose proof relies on the following iteration
\begin{equation}\label{fix-iteration}
q_0=\frac{-\Delta g(x)+p(x)g(x)}{u_e(x,T;0)},\ q_{n+1}=K q_n, \ n=0,1,2\cdots.
\end{equation}

\begin{theorem}\label{theorem_uniqueness}
Recall that the definitions of operator $K$, domain $\mathcal D$ and sequence 
$\{q_n\}_{n=0}^\infty$ are given in \ref{operator}, \ref{domain} and \ref{fix-iteration}, respectively. 
Under Assumption \ref{assumption}, for a fixed point $q$ of $K$ in domain $\mathcal D$, the sequence $\{q_n\}_{n=0}^\infty$ would  converge to $q$ increasingly. 

This leads to the uniqueness of the inverse problem. More precisely, if $q,\tilde q\in\mathcal D$ are fixed points of $K$, then $q=\tilde q$. 
\end{theorem}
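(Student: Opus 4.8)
The plan is to run the monotone iteration \ref{fix-iteration} as a standard monotone-convergence argument, exploiting that $q_0$ is by construction the pointwise infimum of the domain $\mathcal D$ and that $K$ is both monotone (Lemma \ref{lemma_monotone}) and $L^2$-Lipschitz (Lemma \ref{lemma_operator_stability}). The whole argument reduces to producing an increasing, bounded sequence, passing the operator through the limit, and invoking the comparison of fixed points from Lemma \ref{lemma_uniqueness1}.

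First I would establish the two seed inequalities that launch the iteration. Since every $q\in\mathcal D$ satisfies $q\ge(-\Delta g+pg)/u_e(x,T;0)=q_0$ by the definition \ref{domain}, any fixed point $q$ automatically dominates the starting point: $q_0\le q$. Next I would verify the subsolution property $q_0\le Kq_0=q_1$ by a direct pointwise comparison of the two quotients: the numerator of $Kq_0$ exceeds $-\Delta g+pg$ because $\partial_t u_m(x,T;q_0)\ge0$ (Lemma \ref{lemma_positivity}, item $(d)$) and $-\Delta g+pg\ge0$ (Assumption \ref{assumption}(c) together with $g>0$ from Lemma \ref{lemma_positivity}, item $(b)$), while the denominator obeys $u_e(x,T;q_0)\le u_e(x,T;0)$ by the monotonicity of $u_e$ in $q$ (the comparison already carried out in the proof of Lemma \ref{lemma_monotone}, with $q_1=0\le q_0=q_2$). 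Dividing accordingly yields $q_1\ge q_0$.

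With these two inequalities in hand, I would apply the monotonicity of $K$ inductively. From $q_0\le q_1$, Lemma \ref{lemma_monotone} gives $q_1=Kq_0\le Kq_1=q_2$, and by induction $q_n\le q_{n+1}$, so $\{q_n\}$ is pointwise nondecreasing; from $q_0\le q$ and $Kq=q$ it gives $q_{n+1}=Kq_n\le Kq=q$, so the sequence is bounded above by $q$ and in particular remains in $\mathcal D$ since $q_0\le q_n\le q\le M$. Consequently $\{q_n(x)\}$ converges pointwise to a limit $\bar q(x)\le q(x)$, and because $0\le q_n\le M$ uniformly, dominated convergence upgrades this to convergence in $L^2(\Omega)$.

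The crux of the argument is to pass the operator through this limit. Using the $L^2$-continuity furnished by Lemma \ref{lemma_operator_stability}, I would let $n\to\infty$ in $q_{n+1}=Kq_n$ to obtain $\bar q=K\bar q$ in $L^2(\Omega)$; hence $\bar q$ is itself a fixed point of $K$ lying below $q$, and Lemma \ref{lemma_uniqueness1} forces $\bar q=q$. Thus the fixed sequence $\{q_n\}$ converges increasingly to $q$. Finally, since $\{q_n\}$ is determined solely by $q_0$ and $K$ and converges to an arbitrary fixed point, uniqueness of limits yields the stated uniqueness: if $q,\tilde q\in\mathcal D$ are both fixed points, then each equals $\lim_n q_n$, so $q=\tilde q$. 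I expect the main obstacle to be precisely this last passage to the limit — justifying that the monotone pointwise (hence $L^2$) limit $\bar q$ genuinely satisfies the fixed-point equation and stays admissible — which is exactly the point where the Lipschitz estimate of Lemma \ref{lemma_operator_stability} is indispensable.
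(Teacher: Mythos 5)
Your proposal is correct and follows essentially the same route as the paper: seed inequalities $q_0\le q_1$ and $q_0\le q$, induction via the monotonicity of $K$, monotone pointwise (hence $L^2$) convergence of $\{q_n\}$, passage to the limit in $q_{n+1}=Kq_n$ via the Lipschitz bound of Lemma \ref{lemma_operator_stability}, and identification $\bar q=q$ through Lemma \ref{lemma_uniqueness1}. The only cosmetic differences are that you invoke dominated rather than monotone convergence for the $L^2$ limit and cite the comparison inside Lemma \ref{lemma_monotone} instead of rewriting the PDE for $u_e(\cdot,\cdot;q_0)-u_e(\cdot,\cdot;0)$; neither affects the argument.
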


\begin{proof}
Assumption \ref{assumption} and Lemma \ref{lemma_positivity} give that $q_0\ge0$. Setting $v=u_e(x,t;q_0)-u_e(x,t;0)$, we have 
 \begin{equation*}
\begin{cases}
\begin{aligned}
(\partial_t-\Delta) v(x,t)+p(x)v(x,t)&=-q_0u_e(x,t;q_0)\le 0 , && (x,t) \in \Omega\times(0,T],\\
\B v(x,t) &= 0, &&(x,t)\in\partial\Omega\times(0,T],\\
v(x,0)&=0,&&x\in\Omega.
\end{aligned}
\end{cases}
\end{equation*}
With Lemma \ref{lemma_positivity}, we have $v\le 0$, which gives $0< u_e(x,T;q_0)\le u_e(x,T;0)$. Also, Lemma \ref{lemma_positivity} leads to $\partial_t u_m(x,t;q_0)\ge 0$. Hence, we have $q_1=Kq_0\ge q_0$. The fixed point $q\in \mathcal D$ gives that $q\ge q_0$. This together with Lemma \ref{lemma_monotone} leads to $Kq=q\ge Kq_0=q_1$. Hence we have $M\ge q\ge q_1\ge q_0$, and sequentially $q_1\in\mathcal D$ (the continuity of $q_1$ follows from Lemma \ref{lemma_regularity}). Applying Lemma \ref{lemma_monotone}, we obtain that 
$$q=Kq\ge q_2=Kq_1\ge Kq_0=q_1.$$ 
Continuing the above procedure, we conclude that $\{q_n\}_{n=0}^\infty$ is an increasing sequence satisfying  $q_n\le q,\ \N+$.

Now we have proved that $\{q_n\}_{n=0}^\infty$ is an increasing sequence with upper bound $q$. This means the pointwise convergence of $\{q_n\}_{n=0}^\infty$ and we denote the limit by $q^\dag$. From the Monotone Convergence Theorem, we have $\|q^\dag-q_n\|_{L^2(\Omega)}\to 0$. Also, it is obvious that $q^\dag\le q$. 

Next we need to show that $q^\dag$ is a fixed point of $K$ in $\mathcal D$. From the triangle inequality, we have 
\begin{equation*}
 \|q^\dag-Kq^\dag\|_{L^2(\Omega)}\le \|q^\dag-Kq_n\|_{L^2(\Omega)}+\|Kq_n-Kq^\dag\|_{L^2(\Omega)}=:I_3+I_4.
\end{equation*}
For $I_3$, we see that 
$$I_3= \|q^\dag-Kq_n\|_{L^2(\Omega)}= \|q^\dag-q_{n+1}\|_{L^2(\Omega)}\to 0,\ n\to\infty.$$
For $I_4$, Lemma \ref{lemma_operator_stability} gives that 
$$I_4=\|Kq_n-Kq^\dag\|_{L^2(\Omega)}\le C\|q_n-q^\dag\|_{L^2(\Omega)}\to 0,\ n\to\infty.$$
So we have $\|q^\dag-Kq^\dag\|_{L^2(\Omega)}=0$, which means $q^\dag$ is a fixed point of $K$. 
We have proved that $q^\dag$ and $q$ are fixed points of $K$ in $\mathcal D$ with $q^\dag\le q$. With Lemma \ref{lemma_uniqueness1}, we conclude that $q^\dag=q$. 

Next we show the uniqueness of fixed point of $K$ in $\mathcal{D}$. Suppose there are two fixed points $q$ and $\tilde q$ of $K$, the above arguments give that $q_n\to q$ and $q_n\to \tilde q$, which leads to $q=\tilde q$ and completes the proof. 
\end{proof}

%\begin{comment}
\subsection{Stability of inverse problem.}
In this section, we will introduce the stability of inverse problem \ref{IP-continuous} under some conditions. Firstly, we show the energy estimates for the solutions below.
\begin{lemma}\label{lem-energy}
Define $C_p:={\rm Inf} \, p(x)$ and Assumption \ref{assumption} yields that $C_p>0$; suppose that ${\hat q},\, {\tilde q}\in\mathcal{D}$, where $\mathcal D$ is given by \ref{domain}. For equation \ref{PDE_ue}, we have the following estimates
\begin{equation}\label{energyestimate}
\begin{aligned}
\max\big\{\|u_e(\cdot ,T;{\tilde q})-u_e(\cdot,T;{\hat q})\|_{L^2(\Omega)}, \|\partial_t u_e(\cdot,T;{\tilde q})-\partial_t u_e(\cdot,T;{\hat q})\|_{L^2(\Omega)}\big\} &\le  C\|{\hat q}-{\tilde q}\|_{L^2(\Omega)},\\
\max\big\{\|u_e(\cdot,T;{\tilde q})-u_e(\cdot,T;{\hat q})\|_{(H^1(\Omega))^*}, \|\partial_t u_e(\cdot,T;{\tilde q})-\partial_t u_e(\cdot,T;{\hat q})\|_{(H^1(\Omega))^*}\big\}&\le C \|{\hat q}-{\tilde q}\|_{(H^1(\Omega))^*}.
\end{aligned}
\end{equation}
Here $C:=(\sqrt{T}M_b)/\sqrt{C_p}$ is a positive constant. The above estimates are also valid for solution $u_m$ of equation \ref{PDE_um}.
\end{lemma}
\begin{proof}
Let $U_e:=u_e(x,t;{\tilde q})-u_e(x,t;{\hat q})$. From \ref{equation1}, we see that
\begin{equation}\label{equation-U}
\begin{cases}
\begin{aligned}
(\partial_t-\Delta) U_e+(p+{\hat q})U_e &=({\hat q}-{\tilde q})u_e(x,t;{\tilde q}), && (x,t) \in \Omega\times(0,T),\\
\B U_e &= 0, &&(x,t)\in\partial\Omega\times(0,T),\\
U_e(x,0)&=0, &&x\in\Omega.
\end{aligned}
\end{cases}
\end{equation}
Then, by the energy estimate for the solution of the initial boundary value problem for the diffusion equation and Young's inequality, we have
\begin{align*}
\frac{1}{2}\|U_e(\cdot,T)\|_{L^2(\Omega)}^2+&\beta \|U_e\|^2_{L^2(\partial\Omega\times(0,T))}
+\int_0^T\int_\Omega |\nabla U_e|^2dxdt\\
+&\int_0^T\int_\Omega (p(x)+{\hat q}(x))|U_e(x,t)|^2dxdt \\
=&\int_0^T\int_\Omega ({\hat q}(x)-{\tilde q}(x)) u_e(x,t;{\tilde q})U_e(x,t)dx\;dt
\nonumber\\
\le&\frac{1}{2}{C_p} \int_0^T\int_\Omega|U_e(x,t)|^2dx\;dt+\frac{1}{2 C_p}
\int_0^T\int_\Omega ({\hat q}(x)-{\tilde q}(x))^2 \,|u_e(x,t;{\tilde q})|^2\,dx\;dt,
\end{align*}
Then, noticing that $\beta>0$, ${\hat q}(x)\ge0$, $p(x)>0$ and $C_p:={\rm Inf} \, p(x)$, we have
$$
\frac{1}{2}\|U_e(\cdot,T)\|_{L^2(\Omega)}^2 \leq \frac{1}{2 C_p}
\int_0^T\int_\Omega ({\hat q}(x)-{\tilde q}(x))^2 \,|u_e(x,t;{\tilde q})|^2\,dx\;dt,
$$
which leads to $\|u_e(x,T;{\tilde q})-u_e(x,T;{\hat q})\|_{L^2(\Omega)}\leq \frac{\sqrt{T}M_b}{\sqrt{C_p}} \|{\hat q}-{\tilde q}\|_{L^2(\Omega)}$. As shown in \ref{um1-um2}--\ref{ue1-ue2}, we know $u_m(x,t;{\hat q})-u_m(x,t;{\tilde q})=u_e(x,t;{\tilde q})-u_e(x,t;{\hat q})$. Hence it holds also that 
$$\|u_m(x,T;{\tilde q})-u_m(x,T;{\hat q})\|_{L^2(\Omega)}\leq \frac{\sqrt{T}M_b}{\sqrt{C_p}} \|{\hat q}-{\tilde q}\|_{L^2(\Omega)}.$$ 

Next, setting $Z=\partial_t u_e(x,t;{\tilde q})-\partial_t u_e(x,t;{\hat q})=\partial_t u_m(x,t;{\hat q})-\partial_t u_m(x,t;{\tilde q})$, we have
\begin{equation}\label{derivative-um1-um2}
\begin{cases}
\begin{aligned}
(\partial_t-\Delta)Z+(p+{\hat q})Z &=({\tilde q}-{\hat q})\partial_tu_e(x,t;{\tilde q}), && (x,t) \in \Omega\times(0,T),\\
\B Z &= 0, &&(x,t)\in\partial\Omega\times(0,T),\\
Z(x,0)&=0,&&x\in\Omega.
\end{aligned}
\end{cases}
\end{equation}
Again, by the energy estimate for the solution to \ref{derivative-um1-um2}, we have
\begin{align*}
\frac{1}{2}\|Z(\cdot,T)\|_{L^2(\Omega)}^2+&\beta \|Z\|^2_{L^2(\partial\Omega\times(0,T))}
+\int_0^T\int_\Omega |\nabla Z|^2dxdt\\
+&\int_0^T\int_\Omega (p(x)+{\hat q}(x))|Z(x,t)|^2dxdt \\
=&\int_0^T\int_\Omega ({\tilde q}(x)-{\hat q}(x)\partial_t u_e(x,t;{\tilde q})Z(x,t)dx\;dt
\nonumber\\
\le&\frac{1}{2}{C_p} \int_0^T\int_\Omega|Z(x,t)|^2dx\;dt+\frac{1}{2 C_p}
\int_0^T\int_\Omega ({\tilde q}(x)-{\hat q}(x))^2 \,|\partial_t u_e(x,t;{\tilde q})|^2\,dx\;dt.
\end{align*}
Then we have the $L^2$ estimate in \ref{energyestimate}, and the $(H^1)^*$ estimate could be deduced analogously. The proof is complete.
\end{proof}

Now it is time to show the stability of inverse problem \ref{IP-continuous}, which is concluded as the following theorem.
\begin{theorem}\label{thm-stability}
Let Assumption \ref{assumption} be valid. Suppose ${\hat q},\, {\tilde q}\in\mathcal{D}$ and $C_p:={\rm Inf} \, p(x)$ is large enough such that 
$$
\frac{\sqrt{T} M_b (M+1)}{m_{\mathcal Q}\sqrt{C_p}}<1,
$$ 
where $M,\ M_b,\ m_{\mathcal Q}$ can be found in Assumption
\ref{assumption} and Lemma \ref{lemma_lower}. Then there holds that
\begin{align*}
\|{\hat q}-{\tilde q}\|_{L^2(\Omega)}
&\leq C \left(\|{\Delta (\mathbb{G}{\hat q}-\mathbb{G}{\tilde q})}\|_{L^2(\Omega)}+\|p\|_{L^\infty(\Omega)}\|\mathbb{G}{\hat q}-\mathbb{G}{\tilde q}\|_{L^2(\Omega)}\right),\\
\|{\hat q}-{\tilde q}\|_{(H^1(\Omega))^*}
&\leq C \left(\|{\Delta (\mathbb{G}{\hat q}-\mathbb{G}{\tilde q})}\|_{(H^1(\Omega))^*}+\|p\|_{L^\infty(\Omega)}\|\mathbb{G}{\hat q}-\mathbb{G}{\tilde q}\|_{(H^1(\Omega))^*}\right),
\end{align*}
where the positive constant $C:=\sqrt{C_p}/(m_{\mathcal Q}\sqrt{C_p}-\sqrt{T} M_b (M+1))$. 
\end{theorem}
\begin{proof}
Under the given conditions, we have
\ben
{\hat q}-{\tilde q} = \frac{\partial_t u_m(x,T;{\hat q})-\Delta \mathbb{G}{\hat q}+p \mathbb{G}{\hat q}}{u_e(x,T;{\hat q})} - \frac{\partial_t u_m(x,T;{\tilde q})-\Delta \mathbb{G}{\tilde q}+p \mathbb{G}{\tilde q}}{u_e(x,T;{\tilde q})} =: J_1+J_2,
\een
where
$$
J_1:=\frac{\partial_tu_m(x,T;{\hat q})-\Delta \mathbb{G}{\hat q}+p\mathbb{G}{\hat q}}{u_e(x,T;{\hat q})}-\frac{\partial_t u_m(x,T;{\hat q})-\Delta \mathbb{G}{\hat q}+p \mathbb{G}{\hat q}}{u_e(x,T;{\tilde q})}
$$
and
$$
J_2:=\frac{\partial_t u_m(x,T;{\hat q})-\Delta \mathbb{G}{\hat q}+p \mathbb{G}{\hat q}}{u_e(x,T;{\tilde q})} - \frac{\partial_t u_m(x,T;{\tilde q})-\Delta \mathbb{G}{\tilde q}+p \mathbb{G}{\tilde q}}{u_e(x,T;{\tilde q})},
$$
respectively. 

Firstly, we make an estimate for $J_1$. Notice that $\partial_t u_m(x,T;{\hat q})-\Delta \mathbb{G}{\hat q}+p\mathbb{G}{\hat q}={\hat q} u_e(x,T;{\hat q})$ in $H^2(\Omega)$, we have
$$
J_1=\frac{{\hat q}\big(u_e(x,T;{\tilde q})-u_e(x,T;{\hat q})\big)}{u_e(x,T;{\hat q})}.
$$

Then, from Lemmas \ref{lemma_positivity} and \ref{lem-energy}, it holds that
\begin{align}
\|J_1\|_{L^2(\Omega)} =\left\|\frac{{\hat q}[u_e(x,T;{\tilde q})-u_e(x,T;{\hat q})]}{u_e(x,T;{\hat q})}\right\|_{L^2(\Omega)} \leq \frac{\sqrt{T} M_b M}{m_{\mathcal Q}\sqrt{C_p}}  \|{\hat q}-{\tilde q}\|_{L^2(\Omega)}.
\end{align}

Next, for $J_2$, it is straightforward to see that
\begin{align*}
\|J_2\|_{L^2(\Omega)} \leq &\left\|\frac{\partial_tu_m(x,T;{\hat q})-\partial_t u_m(x,T;{\tilde q})}{u_e(x,T;{\tilde q})}\right\|_{L^2(\Omega)}
+\left\|\frac{\Delta \mathbb{G}{\hat q}-\Delta \mathbb{G}{\tilde q}}{u_e(x,T;{\tilde q})}\right\|_{L^2(\Omega)}+\left\|\frac{p(\mathbb{G}{\hat q}-\mathbb{G}{\tilde q})}{u_e(x,T;{\tilde q})}\right\|_{L^2(\Omega)}.
\end{align*}
From Lemmas \ref{lemma_positivity} and \ref{lem-energy} again, we know 
\begin{equation*}
\left\|\frac{\partial_t u_m(x,T;{\hat q})-\partial_t u_m(x,T;{\tilde q})}{u_e(x,T;\tilde q)}\right\|_{L^2(\Omega)}\le \frac{\sqrt{T}M_b}{m_{\mathcal Q}\sqrt{C_p}} \|{\hat q}-{\tilde q}\|_{L^2(\Omega)}.
\end{equation*}
With the condition  
$\frac{\sqrt{T} M_b (M+1)}{m_{\mathcal Q}\sqrt{C_p}}<1,$ 
the estimates for $J_1$ and $J_2$ lead to  
\begin{align*}
\left(1-\frac{\sqrt{T} M_b (M+1)}{m_{\mathcal Q}\sqrt{C_p}}\right)\|{\hat q}-{\tilde q}\|_{L^2(\Omega)}
\leq \frac{1}{m_{\mathcal Q}}\|{\Delta (\mathbb{G}{\hat q}-\mathbb{G}{\tilde q})}\|_{L^2(\Omega)}+\frac{\|p\|_{L^\infty(\Omega)}}{m_{\mathcal Q}}\|\mathbb{G}{\hat q}-\mathbb{G}{\tilde q}\|_{L^2(\Omega)}.
\end{align*}
The estimate for $\|{\hat q}-{\tilde q}\|_{(H^1(\Omega))^*}$ could be proved similarly. The proof is complete. 
\end{proof}
%\end{comment}

%%%%%%%%%%%%%%%%%%%%%%%%%%%%%%%%%%%%%%%
\section{Stochastic error estimates for inverse problem \ref{IP-numeric}.}\label{section3-Stochastic}
%%%%%%%%%%%%%%%%%%%%%%%%%%%%%%%%%%%%%%%
In this section, we will provide the quantitative estimates for the inverse problem \ref{IP-numeric}, which will be divided into the following two sub-problems {\bf P1} and {\bf P2}. 
\subsection{Two sub-problems of inverse problem \ref{IP-numeric}.}
Let the set of discrete points $\{x_i\}_{i=1}^n$ be scattered but quasi-uniformly distributed in $\Omega$. Suppose the measurement comes with noise and takes the form $g_i^\sigma=g(x_i)+e_i, i=1, 2, \cdots, n$,
where $\{e_i\}^{n}_{i=1}$ are independent and identically distributed random variables with zero mean on a probability space. Since the inverse problem \ref{IP-numeric} is ill-posed and the observation detectors are discrete, we divide \ref{IP-numeric} into following two sub-problems:
\begin{itemize}
\item [{\bf P1.}] recovering $-\Delta g$ and $g$ in $\Omega$ from the discrete noisy data $\{g_i^\sigma\}_{i=1}^n$;
\item [{\bf P2.}] recovering the unknown source $q^*$ from the reconstructed data in {\bf P1}.
\end{itemize}

Firstly, define the operator $S: L^2(\Omega)\to H^2(\Omega)$, where $Sf$ satisfies the following elliptic equations 
\begin{equation}
\label{eqn:elliptic}
\left\{
\begin{aligned}
-\Delta (Sf) &= f \quad &\mbox{in } \; &\Omega, \\
\mathcal{B} (Sf) &= 0 \quad \quad &\mbox{on }  \; &\partial \Omega.
\end{aligned} \right.
\end{equation}
Set $f^*:=-\Delta g$ and $Sf^*=g$. Then, the problem {\bf P1} is transformed to approximate $f^*$ and $Sf^*$ by solving an elliptic optimal control problem:
\beq\label{p1}
f^\sigma=\mathop {\rm arg\,min}\limits_{f\in H^s(\Omega)}\frac{1}{n}\sum\limits_{i=1}^{n} {(Sf(x_i)-g_i^\sigma)^2+\lambda \|f\|_{H^s(\Omega)}^2}, \quad s\in\{0,1\},
\eeq
where $\lambda>0$ is the regularization parameter.

Next, similarly with \ref{fix-iteration}, we solve the problem {\bf P2} by the following iteration:
\begin{equation}\label{fix-iteration-step2}
q_0^\sigma=\frac{f^\sigma+p(x) Sf^\sigma}{u_e(x,T;0)},\quad q_{n+1}^\sigma=K^\sigma q_n^\sigma,\quad n=0,1,2\cdots,
\end{equation}
where the operator $K^\sigma$ is defined by
\begin{equation}\label{operator-noise}
K^\sigma q=\frac{\partial_t u_m(x,T;q)+f^\sigma+p(x)Sf^\sigma}{u_e(x,T;q)}.
\end{equation}
Then, we will investigate the stochastic error estimates for the problems {\bf P1} and {\bf P2}.

Before giving the proofs of the stochastic convergence, we need several definitions and properties.
For any $u,v\in C(\bar\Omega)$ and $y\in \mathbb R^n$, we define 
$$
(y,v)_n=\frac{1}{n}\sum^n_{i=1}y_iv(x_i), \q 
(u,v)_n=\frac{1}{n}\sum^n_{i=1}u(x_i)v(x_i),
$$ 
and the empirical semi-norm $\|u\|_n=(\sum_{i=1}^{n} u^2(x_i)/n)^{1/2}$ for any $u\in C(\bar\Omega)$.

Similarly to \cite{Chen-Zhang2022}, we could show that 
\begin{equation*}
\|u\|^2_{L^2(\Omega)}\leq C(\|u\|^2_n+ n^{-2k/d}\|u\|^2_{H^k(\Omega)}),\ \ \|u\|^2_n\leq C(\|u\|^2_{L^2(\Omega)}+ n^{-2k/d}\|u\|^2_{H^k(\Omega)}).
\end{equation*}
We also have for smooth enough domain $\Omega$, 
$$\|Sf\|_{H^{s+2}(\Omega)}\leq C\|f\|_{H^{s}(\Omega)}.$$

In the following two subsections, we will study the probabilistic convergence in two cases: convergence respect to expectation and convergence with an exponential tail.

\subsection{Stochastic convergence respect to expectation.}

Consider independent and identically distributed random variables $\{e_i\}^n_{i=1}$  with
$\mathbb{E}[e_i]=0$ and $\mathbb{E}[e^2_i]\leq \sigma^2$.

 The following Weyl's law  \cite{Fleckinger} plays a key role in studying the convergence of $f^{\sigma}$ to $f^*$.

\begin{lemma}\label{lem:2.1} Suppose $\Omega$ is a bounded open set of $\mathbb{R}^d$ and $a(x)\in C^0(\bar{\Omega})$, then the eigenvalues problem
\beq
\left\{
\begin{aligned}
-\nabla\cdot (a(x)\nabla \psi) &= \mu\psi \quad&\mbox{in } \Omega, \\
 \psi&= 0 \quad \quad&\mbox{on } \partial \Omega,
\end{aligned} \right.
\eeq
has countable positive eigenvalues $\mu_1\le\mu_2\le\cdots\le\mu_k\le\cdots$. Moreover, there exists constants $C_1,C_2>0$ independent of $k$ such that
$C_1 k^{2/d}\le \mu_k\le C_2k^{2/d},k=1,2,\cdots.$
\end{lemma}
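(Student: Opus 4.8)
The plan is to reduce the variable-coefficient problem to the constant-coefficient Dirichlet Laplacian and then invoke the classical two-sided Weyl bound. First I would record that, since $a\in C^0(\bar\Omega)$ and $\bar\Omega$ is compact, the coefficient is uniformly squeezed between two positive constants, $0<a_0\le a(x)\le a_1<\infty$ (the positivity of $a$ is implicit in the statement that the eigenvalues are positive). Consequently the symmetric bilinear form $B(\psi,\phi)=\int_\Omega a\,\nabla\psi\cdot\nabla\phi\,dx$ is continuous and coercive on $H_0^1(\Omega)$ by the Poincar\'e inequality, and the embedding $H_0^1(\Omega)\hookrightarrow L^2(\Omega)$ is compact. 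Standard spectral theory for such forms then yields a nondecreasing sequence of positive eigenvalues $\mu_1\le\mu_2\le\cdots\to\infty$ together with an $L^2$-orthonormal eigenbasis, which settles the first assertion.

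For the two-sided bound I would use the Courant--Fischer min--max characterization
$$
\mu_k=\min_{\substack{V\subset H_0^1(\Omega)\\ \dim V=k}}\ \max_{0\ne\psi\in V}\frac{\int_\Omega a\,|\nabla\psi|^2\,dx}{\int_\Omega|\psi|^2\,dx}.
$$
Because $a_0\le a\le a_1$ pointwise, the Rayleigh quotient is bounded above and below by $a_1$ and $a_0$ times the corresponding quotient for $-\Delta$; taking the min--max over $k$-dimensional subspaces preserves these inequalities and gives $a_0\,\lambda_k\le\mu_k\le a_1\,\lambda_k$, where $\lambda_k=\lambda_k(\Omega)$ denotes the $k$-th Dirichlet eigenvalue of $-\Delta$ on $\Omega$. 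Everything is thereby reduced to establishing $c_1 k^{2/d}\le\lambda_k\le c_2 k^{2/d}$.

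For that final reduction I would sandwich $\Omega$ between an inner cube $Q_0\subset\Omega$ and an outer cube $Q\supset\Omega$, both of which exist since $\Omega$ is a bounded open set. Extension by zero embeds $H_0^1(Q_0)$ into $H_0^1(\Omega)$ and $H_0^1(\Omega)$ into $H_0^1(Q)$, so the families of admissible trial subspaces are nested; feeding these inclusions into the min--max principle yields the domain monotonicity $\lambda_k(Q)\le\lambda_k(\Omega)\le\lambda_k(Q_0)$. On a cube of side $L$ the Dirichlet spectrum is explicit, $\lambda_k$ being the $k$-th smallest value of $\tfrac{\pi^2}{L^2}(m_1^2+\cdots+m_d^2)$ over $m\in(\mathbb{N}^{+})^d$; a direct lattice-point count shows the eigenvalue counting function grows like $L^d\mu^{d/2}$, whence $\lambda_k(\mathrm{cube})\asymp k^{2/d}$ with constants depending only on $L$ and $d$. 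Combining the cube bounds with the monotonicity and the coefficient comparison produces $C_1 k^{2/d}\le\mu_k\le C_2 k^{2/d}$, as claimed; alternatively one may quote the classical statement for $-\Delta$ directly from \cite{Fleckinger}.

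The routine parts are the spectral-theory setup and the min--max manipulations. The step carrying the real content is the two-sided bound for the Dirichlet Laplacian, that is, the cube sandwiching together with the lattice-point count. I expect the only mild subtlety there to be verifying the counting-function estimate for the cube \emph{uniformly in} $k$ --- equivalently, that the number of points of $(\mathbb{N}^{+})^d$ inside a ball of radius $R$ is comparable to $R^d$ for all $R\ge 1$ and not merely asymptotically --- since the lemma demands the bound for every $k$ rather than only in the limit.
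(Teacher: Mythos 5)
The paper does not prove this lemma at all: it is quoted as a known form of Weyl's law with a citation to \cite{Fleckinger}, so there is no in-paper argument to compare against. Your blind proof is a correct and self-contained derivation of the cited result, and the route you take --- coefficient comparison via Courant--Fischer to reduce to the Dirichlet Laplacian, then cube sandwiching plus an explicit lattice-point count --- is the standard elementary way to get the \emph{two-sided, all-$k$} bound (as opposed to the asymptotic Weyl formula, which alone would only give the estimate for large $k$). Your final remark correctly isolates the one point of substance: the bound must hold for every $k$, which follows from the uniform (not merely asymptotic) comparability of the cube's counting function to $R^d$ together with the positivity of $\mu_1$, so that any finite initial segment can be absorbed into the constants. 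The only caveat worth flagging is the one you already noticed: the lemma as stated assumes only $a\in C^0(\bar\Omega)$, and your argument (like any argument) needs uniform ellipticity $0<a_0\le a(x)\le a_1$; this hypothesis is implicit in the paper's intended application (where the role of $a$ is played by a strictly positive coefficient) but is not literally in the statement, so you are right to make it explicit rather than silently assume it. With that hypothesis recorded, the spectral-theory setup, the min--max comparison $a_0\lambda_k\le\mu_k\le a_1\lambda_k$, the domain monotonicity $\lambda_k(Q)\le\lambda_k(\Omega)\le\lambda_k(Q_0)$ via extension by zero, and the cube computation are all sound, and the proof is complete.
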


From the above lemma, we conclude that the equivalent eigenvalue problem $u=\mu Su$ has countable positive eigenvalues $C_1 k^{2/d}\le \mu_k\le C_2k^{2/d},k=1,2,\cdots.$ 

Following similar proofs of Lemma 2.1 and 2.2 in \cite{Chen-Zhang2022}, we have the lemmas below.
\begin{lemma}\label{lem:2.2} For any $y\in {\mathbb R}^n$, let $u$ to be the interpolation function of the following problem
\begin{equation}
\min_{u\in H^s(\Omega),Su(x_i)=y_i} \|u\|^2_{H^s(\Omega)},
\end{equation}
then $u\in V_n$, where $V_n$ is a n-dimensional subset of $H^s(\Omega)$.
\end{lemma}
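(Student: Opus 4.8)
The plan is to recognize Lemma \oldref{lem:2.2} as a \emph{representer theorem}: minimizing a Hilbert-space norm subject to finitely many bounded linear constraints forces the minimizer into the (at most $n$-dimensional) span of the Riesz representers of those constraints. The whole proof reduces to checking that the $n$ interpolation conditions $Su(x_i)=y_i$ are genuinely bounded linear functionals on $H^s(\Omega)$, after which the geometry of the minimization is standard.

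First I would verify that each constraint $L_i(u):=Su(x_i)$ is a bounded linear functional on $H^s(\Omega)$. By the elliptic regularity bound recorded just above the lemma, $\|Su\|_{H^{s+2}(\Omega)}\le C\|u\|_{H^s(\Omega)}$; and since $s+2\ge 2 > d/2$ for $1\le d\le 3$, the Sobolev embedding $H^{s+2}(\Omega)\hookrightarrow C(\bar\Omega)$ makes the pointwise evaluation $Su\mapsto Su(x_i)$ continuous. Composing the two gives $|L_i(u)|\le C\|u\|_{H^s(\Omega)}$, so each $L_i$ is bounded and linear. By the Riesz representation theorem on the Hilbert space $H^s(\Omega)$ there exist $\phi_i\in H^s(\Omega)$ with $L_i(u)=\langle u,\phi_i\rangle_{H^s(\Omega)}$ for all $u$.

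Next I would set $V_n:=\mathrm{Span}\{\phi_1,\dots,\phi_n\}$ and run the orthogonal-projection argument on $H^s(\Omega)=V_n\oplus V_n^\perp$. For any feasible $u$ (i.e.\ $Su(x_i)=y_i$ for all $i$) write $u=v+w$ with $v\in V_n$, $w\in V_n^\perp$. Because $L_i(u)=\langle u,\phi_i\rangle_{H^s(\Omega)}=\langle v,\phi_i\rangle_{H^s(\Omega)}=L_i(v)$, the projection $v$ is again feasible, while the Pythagorean identity $\|u\|_{H^s(\Omega)}^2=\|v\|_{H^s(\Omega)}^2+\|w\|_{H^s(\Omega)}^2\ge\|v\|_{H^s(\Omega)}^2$ holds with equality only when $w=0$. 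Hence the minimizer must have vanishing component in $V_n^\perp$, so $u\in V_n$ as claimed. To pin down the dimension, I would use the Green's representation $Su(x)=\int_\Omega G(x,\xi)u(\xi)\,d\xi$ and the distinctness of the nodes $\{x_i\}$ to conclude that the representers $\{\phi_i\}$ are linearly independent; this simultaneously gives $\dim V_n=n$, surjectivity of $u\mapsto(Su(x_1),\dots,Su(x_n))$ onto $\mathbb{R}^n$ (so a feasible point exists for every $y$), and a unique minimizer via the resulting finite-dimensional strictly convex quadratic problem restricted to $V_n$.

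The step I expect to be the main obstacle is precisely the boundedness of pointwise evaluation: it is the only place where the hypotheses are essential, relying on the regularity gain $\|Su\|_{H^{s+2}(\Omega)}\le C\|u\|_{H^s(\Omega)}$ combined with the dimensional restriction $d\le 3$ through the embedding $H^{s+2}(\Omega)\hookrightarrow C(\bar\Omega)$. Establishing that $\dim V_n$ equals exactly $n$ (linear independence of the representers) is a secondary technical point, and is where the distinctness of the sampling nodes enters the argument.
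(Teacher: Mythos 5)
Your proposal is correct and follows essentially the same route as the paper's own (commented-out) proof: both identify $V_n$ with the orthogonal complement in $H^s(\Omega)$ of the common kernel $V=\{v: Sv(x_i)=0,\ i=1,\dots,n\}$ of the evaluation constraints and conclude by the Pythagorean identity that projecting a feasible $u$ onto that complement preserves feasibility while decreasing the norm. The only differences are cosmetic — the paper spans $V_n$ by projected cardinal functions $\psi_i=\phi_i-P_V[\phi_i]$ with $S\phi_i(x_j)=\delta_{ij}$ rather than by Riesz representers, and you additionally spell out the boundedness of point evaluation and the linear independence that the paper leaves implicit.
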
 
\begin{comment}
\begin{proof} 
Let $V$ be a subset of $H^s(\Omega)$ such that
\begin{equation}
V=\{v\in H^s(\Omega): Sv(x_i)=0, i=1,2,\cdots, n.\}
\end{equation}
Define the projection operator $P_V: H^s(\Omega) \rightarrow V$,
\begin{equation}
(P_V [u],v)=(u,v),  \forall v\in V.
\end{equation}
Choose $\phi_i\in H^s(\Omega)$, s.t. $S\phi_i(x_j)=\delta_{i,j}$, here $\delta_{i,j}$ is the Kronerker delta function. 
Let $\psi_i= - P_V[\phi_i] +\phi_i$ and $V_n=span\{\psi_i\}_{i=1}^n$. For any $u\in H^s(\Omega)$, define the interpolation operator $I$,
\begin{equation}
Iu=\sum_{i=1}^n Su(x_i) \psi_i.
\end{equation}
Then from the definitions above, we could tell $Iu\in  V_n$ and $u-Iu\in V$. Then, 
\ben
(u-Iu,Iu)=(u-Iu,\sum_{i=1}^n Su(x_i)(\phi_i- P_V[\phi_i]))
=\sum_{i=1}^n Su(x_i)(u-Iu,\phi_i- P_V[\phi_i])=0,
\een
here we use the fact that $(v,\phi_i- P_V[\phi_i])=0,\forall v\in V$ in the last equality.

We could derive form the above equality that $(Iu,Iu)\leq (u,u).$ That is to say,
\begin{equation}
\min_{u\in V_n,Su(x_i)=y_i} \|u\|^2_{H^s(\Omega)}=\min_{u\in H^s(\Omega),Su(x_i)=y_i} \|u\|^2_{H^s(\Omega)}.
\end{equation}
The proof is complete.
\end{proof}
\end{comment}

\begin{lemma}\label{lemma:2.3} Let $V_n$ be defined in Lemma \ref{lem:2.2}. The eigenvalue problem
\beq\label{eigen-n}
(\psi ,v)_{H^s}=\rho (S\psi ,Sv)_n, \forall v\in V_n,
\eeq
has $n$ finite eigenvalues $ \rho_k,\ k=1,2,\cdots, n,$ and all the eigenfunctions consist the orthogonal basis of $V_n$ respect to the norm $\|S\cdot\|_n$. Moreover,
there exist constants $C_3>0$ independent of $k$ such that
$C_3k^{2(2+s)/d}\le \rho_k,\ k=1,2,\cdots, n.$
\end{lemma}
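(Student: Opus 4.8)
The plan is to read the eigenvalue problem \ref{eigen-n} as a \emph{generalized} eigenvalue problem on the finite-dimensional space $V_n$ between the two symmetric bilinear forms $A(\psi,v):=(\psi,v)_{H^s}$ and $B(\psi,v):=(S\psi,Sv)_n$. The form $A$ is the restriction of the $H^s$ inner product and is therefore positive definite on $V_n$; the form $B$ is positive definite on $V_n$ as well, since the construction of $V_n$ in Lemma \ref{lem:2.2} yields a basis $\{\psi_i\}$ with $S\psi_i(x_j)=\delta_{ij}$, so that $\|S\psi\|_n^2=\tfrac1n\sum_i c_i^2$ for $\psi=\sum_i c_i\psi_i$, which vanishes only when $\psi=0$. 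With two positive definite forms on an $n$-dimensional space, the simultaneous diagonalization theorem provides exactly $n$ finite positive eigenvalues $\rho_1\le\cdots\le\rho_n$ and an eigenbasis that is $B$-orthogonal, i.e.\ orthogonal with respect to $\|S\cdot\|_n$. This settles the first two assertions and reduces the lemma to the growth estimate $\rho_k\ge C_3 k^{2(2+s)/d}$.

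For the growth rate I would compare \ref{eigen-n} with the \emph{continuous} eigenvalue problem $(\psi,v)_{H^s}=\lambda\,(S\psi,Sv)_{L^2(\Omega)}$ on $H^s(\Omega)$. Expanding in the $L^2$-orthonormal eigenfunctions $\phi_j$ of $S$, for which $S\phi_j=\mu_j^{-1}\phi_j$ and, by Lemma \ref{lem:2.1}, $\mu_j\ge C_1 j^{2/d}$, and using the standard norm equivalence $\|\psi\|_{H^s}^2\simeq\sum_j(1+\mu_j)^s c_j^2$ valid for $s\in\{0,1\}$, the Rayleigh quotient becomes, up to equivalent constants, $\big(\sum_j(1+\mu_j)^s c_j^2\big)\big/\big(\sum_j\mu_j^{-2}c_j^2\big)$. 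Since this is a quotient of two diagonal forms with increasing diagonal, its $k$-th eigenvalue is comparable to $(1+\mu_k)^s\mu_k^2\ge c\,\mu_k^{s+2}$, so that $\lambda_k\ge c\,k^{2(2+s)/d}$.

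Finally I would combine this with the Courant–Fischer characterization $\rho_k=\min_{\dim W=k,\,W\subseteq V_n}\ \max_{0\ne\psi\in W}\ \|\psi\|_{H^s}^2/\|S\psi\|_n^2$. Given any $k$-dimensional $W\subseteq V_n$, the $k-1$ linear constraints $(S\psi,Se_j)_{L^2}=0$ for $j=1,\dots,k-1$, with $e_j$ the first continuous eigenfunctions, leave a nonzero $\psi\in W$; for this $\psi$ the continuous min–max gives $\|\psi\|_{H^s}^2\ge\lambda_k\|S\psi\|_{L^2}^2\ge c\,k^{2(2+s)/d}\|S\psi\|_{L^2}^2$. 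To pass to the empirical norm I would invoke the sampling inequality $\|S\psi\|_n^2\le C\big(\|S\psi\|_{L^2}^2+n^{-2(s+2)/d}\|S\psi\|_{H^{s+2}}^2\big)$ together with the smoothing bound $\|S\psi\|_{H^{s+2}}\le C\|\psi\|_{H^s}$; because $k\le n$, both resulting terms are dominated by $C'k^{-2(2+s)/d}\|\psi\|_{H^s}^2$, whence $\|\psi\|_{H^s}^2/\|S\psi\|_n^2\ge C_3 k^{2(2+s)/d}$. As $W$ was arbitrary, the min–max delivers $\rho_k\ge C_3 k^{2(2+s)/d}$.

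The linear-algebra diagonalization is routine; the delicate point is this last transfer from the continuous $L^2$-based spectral asymptotics to the discrete semi-norm $\|S\cdot\|_n$. The danger is that the aliasing term $n^{-2(s+2)/d}\|S\psi\|_{H^{s+2}}^2$ in the sampling inequality could dominate and destroy the lower bound; controlling it uniformly for all $k\le n$ is precisely what the two-derivative smoothing estimate $\|S\psi\|_{H^{s+2}}\le C\|\psi\|_{H^s}$ and the range restriction $k\le n$ accomplish. I would also verify that $S\psi$ is continuous so that the pointwise evaluations defining $\|S\psi\|_n$ make sense, which holds since $S\psi\in H^{s+2}(\Omega)\hookrightarrow C(\bar\Omega)$ for $1\le d\le 3$.
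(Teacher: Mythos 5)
Your proposal is correct and follows essentially the same route as the paper's own argument: the finite-dimensional generalized-eigenvalue/diagonalization step for existence and $\|S\cdot\|_n$-orthogonality, the continuous spectral asymptotics $\lambda_k\gtrsim k^{2(2+s)/d}$ from Lemma \ref{lem:2.1}, and a min--max comparison between the discrete and continuous Rayleigh quotients controlled by the sampling inequality, the smoothing bound $\|S\psi\|_{H^{s+2}}\le C\|\psi\|_{H^s}$, and the restriction $k\le n$. The only cosmetic difference is that you realize the comparison via orthogonality constraints against the first $k-1$ continuous eigenfunctions, whereas the paper inserts the aliasing term into the denominator and identifies the resulting quotient's eigenvalues as $\eta_k/(1+n^{-2(2+s)/d}\eta_k)$; both hinge on exactly the same control of the aliasing term that you correctly flag as the delicate point.
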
 

Then, we have the following convergence in expectation.

\begin{theorem}\label{thm:2.1}
Let $f^{\sigma}\in H^s(\Omega)$ be the unique solution of (\ref{p1}).
Then there exist constants $\lambda_0 > 0$ and $C>0$ such that for any $\lambda \leq \lambda_0$,
\be
& &\label{p5}
\mathbb{E} \big[\|Sf^{\sigma}-Sf^*\|^2_n\big] \leq C \lambda \|f^*\|^2_{H^s(\Omega)} + \frac{C\sigma^2}{n\lambda^{d/2/(2+s)}},\\
& &\label{p6}
\mathbb{E} \big[\|f^{\sigma}\|^2_{H^s(\Omega)}\big] \leq C \|f^*\|^2_{H^s(\Omega)} + \frac{C\sigma^2}{n\lambda^{1+d/2/(2+s)}},\quad s\in\{0,1\}.
\ee
If $n^{(4+2s)/d}\lambda\geq 1$,  for $s=0$, we have
\begin{equation}\label{H-1-1}
\mathbb{E} \big[\|f^{\sigma}-f^*\|^2_{(H^{1}(\Omega))^*}\big] \leq C \lambda^{1/2} \|f^*\|^2_{L^2(\Omega)} + \frac{C\sigma^2}{n\lambda^{d/4+1/2}}.
\end{equation}

For $s=1$, we have
\begin{equation}\label{L2-1-1}
\mathbb{E} \big[\|f^{\sigma}-f^*\|^2_{L^2(\Omega)}\big] \leq C \lambda^{1/3} \|f^*\|^2_{H^1(\Omega)} + \frac{C\sigma^2}{n\lambda^{d/6+2/3}}.
\end{equation}
\end{theorem}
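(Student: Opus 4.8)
The plan is to run a bias--variance argument driven by the spectral decomposition in Lemma \ref{lemma:2.3}, and then to upgrade the resulting discrete estimate to the weaker continuous norms by interpolation. First I would extract the variational inequality from the minimality of $f^\sigma$ in \ref{p1}. Writing $w:=f^\sigma-f^*$ and using $g_i^\sigma=Sf^*(x_i)+e_i$, a comparison of the objective at $f^\sigma$ and at $f^*$ gives
\[
\|Sw\|_n^2+\lambda\|f^\sigma\|_{H^s(\Omega)}^2\le 2(e,Sw)_n+\lambda\|f^*\|_{H^s(\Omega)}^2 .
\]
As in Lemma \ref{lem:2.2}, the minimizer obeys $f^\sigma\in V_n$; letting $v^*\in V_n$ be the minimal-norm interpolant of $\{Sf^*(x_i)\}$ (so that $Sv^*(x_i)=Sf^*(x_i)$ and $\|v^*\|_{H^s(\Omega)}\le\|f^*\|_{H^s(\Omega)}$), the function $\zeta:=f^\sigma-v^*\in V_n$ agrees with $w$ at every node. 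Hence $\|Sw\|_n=\|S\zeta\|_n$ and $(e,Sw)_n=(e,S\zeta)_n$, so the entire cross term can be analysed inside $V_n$.

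Next I would expand $\zeta=\sum_k b_k\phi_k$ in the eigenbasis of Lemma \ref{lemma:2.3}, normalized so that $(S\phi_j,S\phi_k)_n=\delta_{jk}$ and $(\phi_j,\phi_k)_{H^s(\Omega)}=\rho_k\delta_{jk}$, whence $\|S\zeta\|_n^2=\sum_k b_k^2$ and $\|\zeta\|_{H^s(\Omega)}^2=\sum_k\rho_k b_k^2$. A weighted Cauchy--Schwarz with weights $1+\lambda\rho_k$ yields
\[
2(e,S\zeta)_n\le 2\Big(\sum_k(1+\lambda\rho_k)b_k^2\Big)^{1/2}\Big(\sum_k\frac{(e,S\phi_k)_n^2}{1+\lambda\rho_k}\Big)^{1/2},
\]
whose first factor equals $(\|S\zeta\|_n^2+\lambda\|\zeta\|_{H^s(\Omega)}^2)^{1/2}$ and is controlled by $C(\|Sw\|_n^2+\lambda\|f^\sigma\|_{H^s(\Omega)}^2+\lambda\|f^*\|_{H^s(\Omega)}^2)^{1/2}$. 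The probabilistic input is the per-mode variance bound $\E[(e,S\phi_k)_n^2]\le\sigma^2 n^{-1}\|S\phi_k\|_n^2=\sigma^2/n$, immediate from $\E[e_i]=0$, $\E[e_i^2]\le\sigma^2$ and independence. Together with Weyl's law $\rho_k\ge C_3 k^{2(2+s)/d}$ and the convergence of $\sum_k(1+\lambda\rho_k)^{-1}\le C\lambda^{-d/2/(2+s)}$ (which holds precisely because $d\le 3<4+2s$), this gives $\E\big[\sum_k(e,S\phi_k)_n^2/(1+\lambda\rho_k)\big]\le C\sigma^2 n^{-1}\lambda^{-d/2/(2+s)}$. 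Substituting into the variational inequality, using Young's inequality to absorb the first factor into the left-hand side, and taking expectations yields \ref{p5} at once and \ref{p6} upon dividing by $\lambda$.

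Finally I would obtain \ref{H-1-1} and \ref{L2-1-1} by interpolation. Combining $\|Sf\|_{H^{s+2}(\Omega)}\le C\|f\|_{H^s(\Omega)}$ with the stated norm equivalence at level $k=s+2$, the hypothesis $n^{(4+2s)/d}\lambda\ge1$ (i.e. $n^{-(4+2s)/d}\le\lambda$) lets me replace $\|Sw\|_n$ by $\|Sw\|_{L^2(\Omega)}$ without losing order, so that $\E[\|Sw\|_{L^2(\Omega)}^2]\le C\lambda\|f^*\|_{H^s(\Omega)}^2+C\sigma^2 n^{-1}\lambda^{-d/2/(2+s)}$. Since $S$ is the solution operator of the Robin Laplacian, duality gives $\|w\|_{(H^2(\Omega))^*}\le C\|Sw\|_{L^2(\Omega)}$. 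For $s=0$ I interpolate $(H^1(\Omega))^*=[L^2(\Omega),(H^2(\Omega))^*]_{1/2}$ to get $\|w\|_{(H^1(\Omega))^*}^2\le C\|w\|_{L^2(\Omega)}\|Sw\|_{L^2(\Omega)}$; for $s=1$ I interpolate $L^2(\Omega)=[H^1(\Omega),(H^2(\Omega))^*]_{1/3}$ to get $\|w\|_{L^2(\Omega)}^2\le C\|w\|_{H^1(\Omega)}^{4/3}\|Sw\|_{L^2(\Omega)}^{2/3}$, the fractions $1/2$ and $1/3$ reproducing the bias powers $\lambda^{1/2}$ and $\lambda^{1/3}$ in the statement.

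Taking expectations by Cauchy--Schwarz (respectively H\"older with exponents $3/2,3$) and inserting \ref{p6} to bound $\E[\|w\|_{H^s(\Omega)}^2]$, the variance terms recombine into exactly $\sigma^2 n^{-1}\lambda^{-(d/4+1/2)}$ and $\sigma^2 n^{-1}\lambda^{-(d/6+2/3)}$, the mixed contributions being absorbed by AM--GM (they coincide with the geometric mean of the two retained terms). I expect the main obstacle to be the second step: reducing the stochastic cross term to $V_n$ and handling the eigenbasis normalization so that the variance sum collapses to the sharp Weyl power $\lambda^{-d/2/(2+s)}$, rather than to the crude $\lambda$-independent bound that a naive Cauchy--Schwarz would produce.
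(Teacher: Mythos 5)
Your proposal is correct and follows essentially the same route as the paper: the variational inequality from minimality, reduction of the stochastic cross term to $V_n$ via Lemma \ref{lem:2.2}, the weighted Cauchy--Schwarz expansion in the eigenbasis of Lemma \ref{lemma:2.3} with the Weyl-type bound $\sum_k(1+\lambda\rho_k)^{-1}\le C\lambda^{-d/2/(2+s)}$, and interpolation for \ref{H-1-1} and \ref{L2-1-1}. The only cosmetic difference is that you interpolate $w$ between $L^2$, $H^1$ and $(H^2(\Omega))^*$ directly, whereas the paper interpolates $Sw$ between $L^2(\Omega)$, $H^2(\Omega)$ and $H^3(\Omega)$ and then passes to $w$ by duality; the exponents and the use of $n^{(4+2s)/d}\lambda\ge 1$ coincide.
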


\begin{proof} It is clear that $f^{\sigma}\in L^2(\Omega)$ satisfies the following variational equation
\beq
\label{p7}
\lambda (f^{\sigma},v)+(Sf^{\sigma},Sv)_n =(y,Sv)_n,\ \ \ \forall v\in L^2(\Omega).
\eeq
For any $v\in H^s(\Omega)$, denote the energy norm
$\lla v\rra^2_{\lambda}:=\lambda(v,v)_{H^s}+\|Sv\|_n^2$.
By taking $v=f^{\sigma}-f^*$ in \ref{p7} one obtains easily
\beq\label{x4}
\lj f^{\sigma}-f^*\rj_{\lam}\le \lam^{1/2} |f^*|_{L^2(\Om)}+\sup_{v\in L^2(\Om)}\frac{(e,Sv)_n}{\lla v\rra_{\lam}},
\eeq
where $e$ represents the random error vector.

We will just have to estimate the term $\sup_{v\in {H^s}(\Om)}\frac{(e,Sv)^2_n}{\lla v\rra_{\lam}^2}$. 
\ben
\sup_{v\in {H^s}(\Om)}\frac{(e,Sv)^2_n}{\lla v\rra_{\lam}^2}
&=&\sup_{v\in {H^s}(\Om)}\frac{(e,Sv)^2_n}{\lambda(v,v)_{H^s}+\|Sv\|_n^2}\\
&= & \sup_{v\in {H^s}(\Om)}\frac{(e,Sv)^2_n}{\lambda \min_{u\in {H^s}(\Om), Su(x_i)=Sv(x_i)}(u,u)_{H^s}+\|Sv\|_n^2}\\
&= & \sup_{v\in {H^s}(\Om)}\frac{(e,Sv)^2_n}{\lambda \min_{u\in V_n, Su(x_i)=Sv(x_i)}(u,u)+\|Sv\|_n^2}\\
&= & \sup_{v\in V_n}\frac{(e,Sv)^2_n}{\lambda(v,v)_{H^s}+\|Sv\|_n^2}.\\
\een
In the above equalities, we have applied the Lemma \ref{lem:2.2}.

Let $\rho_1\le\rho_2\le\cdots\le\rho_n$ be the eigenvalues of the problem
\beq\label{x2}
(\psi,v)_{H^s}=\rho(S\psi,Sv)_n\ \ \ \ \forall v\in V_n,
\eeq
and $\{\psi_k\}^n_{k=1}$ be the eigenfunctions of \ref{x2} corresponding to eigenvalues $\{\rho_k\}^n_{k=1}$ satisfying
$(S\psi_k,S\psi_l)_n=\delta_{kl}$, where $\delta_{kl}$ is the Kronerker delta function, $k,l=1,2,\cdots, n$. We see that $\{\psi_k\}^n_{k=1}$ is an orthonormal basis of $V_n$ in the inner product $(S\cdot,S\cdot)_n$. Now for any $v\in V_n$, we have the expansion $v(x)=\sum^n_{k=1}v_k\psi_k(x)$, where $v_k=(Sv,S\psi_k)_n$, $k=1,2,\cdots,n$. Thus
$\lla v\rra^2_{\lam}=\sum^n_{k=1}(\lam\rho_k+1)v_k^2$.
By the Cauchy-Schwarz inequality
\ben
(e,Sv)_n^2 &= & \frac{1}{n^2}\left(\sum^n_{i=1}e_i\left(\sum^n_{k=1} v_k S\psi_k(x_i)\right)\right)^2 = \frac{1}{n^2}\left(\sum^n_{k=1}v_k\left(\sum^n_{i=1} e_i S\psi_k(x_i)\right)\right)^2 \\
&\leq &\frac 1{n^2}\sum^n_{k=1}(1+\lam_n\rho_k)v_k^2\cdot\sum^n_{k=1}(1+\lam_n\rho_k)^{-1}\Big(\sum^n_{i=1}e_i (S\psi_k)(x_i)\Big)^2.
\een
This implies
\ben
\mathbb{E}\left[\sup_{v\in V_n}\frac{(e,v)_n^2}{\lla v\rra^2_{\lam}}\right]
\le\frac 1{n^2}\sum^n_{k=1}(1+\lam\rho_k)^{-1}\mathbb{E}\left(\sum^n_{i=1}e_i S\psi_k(x_i)\right)^2
=\sigma^2n^{-1}\sum^n_{k=1}(1+\lam\rho_k)^{-1},
\een
where we have used the fact that $\|S\psi_k\|_n=1$. Now by Lemma \ref{lemma:2.3} we obtain
\ben
\mathbb{E}\left[\sup_{v\in H^s(\Om)}\frac{(e,Sv)_n^2}{\lla v\rra^2_{\lam}}\right]
&\le&C\sigma^2 n^{-1}\sum_{k=1}^n(1+\lam k^{2(2+s)/d})^{-1}\\
&\le&C\sigma^2 n^{-1}\int^\infty_{1}(1+\lam t^{2(2+s)/d})^{-1}dt
=C\frac{\sigma^2}{n\lam^{d/2/(2+s)}}.
\een
This completes the proof by using \ref{x4}.

Next, with the assumption that $n^{d/2/(2+s)}\lambda \geq 1$, we will have 
\be
\mathbb{E} \big[\|Sf^{\sigma}-S f^*\|^2_{L^2(\Omega)}\big] \leq C \lambda \|f^*\|^2_{L^2(\Omega)} + \frac{C\sigma^2}{n\lambda^{d/2/(2+s)}},
\ee
Apply the interpolation inequality in Sobolev spaces, there exist $\theta_0$ such that for all $\theta\leq \theta_0$
\be
\|Sf^{\sigma}-S f^*\|^2_{H^1(\Omega)} \leq 
C\theta^{-1}\|Sf^{\sigma}-S f^*\|^2_{L^2(\Omega)}+C\theta\|Sf^{\sigma}-S f^*\|^2_{H^2(\Omega)}
\ee
Take $\theta=\lambda^{1/2}$,
\ben
\mathbb{E} \big[\|Sf^{\sigma}-S f^*\|^2_{H^1(\Omega)}\big]
& \leq & C\lambda^{-1/2}\mathbb{E} \big[\|Sf^{\sigma}-S f^*\|^2_{L^2(\Omega)}\big]+C\lambda^{1/2}\mathbb{E} \big[\|Sf^{\sigma}-S f^*\|^2_{H^2(\Omega)}\big]\\
&\leq & C \lambda^{1/2} \|f^*\|^2_{L^2(\Omega)} + \frac{C\sigma^2}{n\lambda^{d/2/(2+s)+1/2}}.
\een

With the definition of the operator $S$, we have for any $v\in H^1(\Omega)$,
\ben
|(f^*-f^{\sigma},v)|=|(a\nabla S(u^*-f^{\sigma}),\nabla v)|
\leq C\|Sf^{\sigma}-S f^*\|^2_{H^1(\Omega)}\|v\|^2_{H^1(\Omega)}
\een
This implies 
\ben
\mathbb{E} \big[\|f^{\sigma}-f^*\|^2_{(H^{1}(\Omega))^*}\big] \leq C\mathbb{E} \big[\|Sf^{\sigma}-S f^*\|^2_{H^1(\Omega)}\big]
\leq C \lambda^{1/2} \|f^*\|^2_{L^2(\Omega)} + \frac{C\sigma^2}{n\lambda^{d/2/(2+s)+1/2}}.
\een
Moreover, for $s=1$,
\be \label{H2-H3}
\|Sf^{\sigma}-S f^*\|^2_{H^2(\Omega)} \leq 
C\theta^{-2}\|Sf^{\sigma}-S f^*\|^2_{L^2(\Omega)}+C\theta\|Sf^{\sigma}-S f^*\|^2_{H^3(\Omega)}
\ee
Take $\theta=\lambda^{1/3}$,
\ben
\mathbb{E} \big[\|Sf^{\sigma}-S f^*\|^2_{H^2(\Omega)}\big] 
&\leq & C\lambda^{-2/3}\mathbb{E} \big[\|Sf^{\sigma}-S f^*\|^2_{L^2(\Omega)}\big]+C\lambda^{1/3}\mathbb{E} \big[\|Sf^{\sigma}-S f^*\|^2_{H^3(\Omega)}\big]\\
&\leq & C \lambda^{1/3}\|f^*\|^2_{H^1(\Omega)} + \frac{C\sigma^2}{n\lambda^{d/2/(2+s)+2/3}}.
\een
This gives the estimate \ref{L2-1-1} and the proof is complete.
\end{proof}

Balancing all the two terms on the right side \ref{p5}-\ref{L2-1-1}, Theorem \ref{lem:2.1} suggests that an proper choice of the parameter $\lam$ is such that $$\lam^{1+d/2/(2+s)}=O(\sigma^2n^{-1}\|f^*\|_{H^s(\Om)}^{-2}).$$

Now we are ready to give the main result of this subsection. Combining Theorem \ref{thm-stability} and Theorem \ref{thm:2.1}, we have the following convergence estimates.

\begin{theorem}\label{thm-estimate-3.2}
Let $q^*$ be the fixed point of iteration \ref{fix-iteration} and $q^\sigma$ be the fixed point of iteration \ref{fix-iteration-step2}, respectively. Then, under the conditions in Theorem \ref{thm-stability}, there exists a constant $C>0$ depending only on $C_p$, $M_b$, $M$, $m_{\mathcal Q}$ and $T$ such that:
\begin{itemize}
\item [(i)] For the case of $s=0$ in \ref{p1}, it holds
$$
\mathbb{E} \big[\|q^{\sigma}-q^*\|_{(H^1(\Omega))^*}\big] \leq C (\lambda^{1/4}+\lambda^{1/2} \|p\|_{L^\infty(\Omega)} )\|f^*\|^2_{L^2(\Omega)},
$$
where $\lam^{1/2+d/8}=O(\sigma n^{-1/2}\|f^*\|_{L^2(\Om)}^{-1})$;

\item [(ii)]  For the case of $s=1$ in \ref{p1}, it holds
$$
\mathbb{E} \big[\|q^{\sigma}-q^*\|_{L^2(\Omega)}\big] \leq C (\lambda^{1/6}+\lambda^{1/2}\|p\|_{L^\infty(\Omega)} )\|f^*\|^2_{H^1(\Omega)},
$$
where
$\lam^{1/2+d/12}=O(\sigma n^{-1/2}\|f^*\|_{H^1(\Om)}^{-1})$. 
\end{itemize} 
\end{theorem}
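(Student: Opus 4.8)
The plan is to collapse the whole error $q^\sigma-q^*$ onto the reconstruction errors $f^\sigma-f^*$ and $Sf^\sigma-Sf^*$ of sub-problem {\bf P1} by means of a single pointwise identity, and then to feed the mean-square estimates of Theorem \ref{thm:2.1} into the conditional stability of Theorem \ref{thm-stability}.

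First I would prove the key identity
$$\mathbb{G}q^\sigma-\mathbb{G}q^*=Sf^\sigma-Sf^*.$$
Since $q^\sigma$ is a fixed point of $K^\sigma$, the relation $K^\sigma q^\sigma=q^\sigma$ together with equation \ref{PDE_um} evaluated at $t=T$ (which reads $\partial_t u_m(x,T;q^\sigma)-\Delta\mathbb{G}q^\sigma+p\,\mathbb{G}q^\sigma=q^\sigma u_e(x,T;q^\sigma)$) cancels the $\partial_t u_m$ term and leaves $-\Delta\mathbb{G}q^\sigma+p\,\mathbb{G}q^\sigma=f^\sigma+p\,Sf^\sigma$. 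The same computation for $q^*$, using $\mathbb{G}q^*=g$, $f^*=-\Delta g$ and $Sf^*=g$, gives $-\Delta\mathbb{G}q^*+p\,\mathbb{G}q^*=f^*+p\,Sf^*$. Subtracting and recalling from \ref{eqn:elliptic} that $-\Delta(Sf^\sigma-Sf^*)=f^\sigma-f^*$, both $\mathbb{G}q^\sigma-\mathbb{G}q^*$ and $Sf^\sigma-Sf^*$ solve the same Robin problem $-\Delta v+pv=(f^\sigma-f^*)+p(Sf^\sigma-Sf^*)$, $\B v=0$, which is coercive and hence uniquely solvable because $p\ge C_p>0$; the identity follows.

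Next, assuming $q^\sigma\in\mathcal D$ (which I would justify by a domain argument for the noisy iteration \ref{fix-iteration-step2} parallel to the one in Theorem \ref{theorem_uniqueness}), I would apply Theorem \ref{thm-stability} with $\hat q=q^\sigma$ and $\tilde q=q^*$. Substituting the identity together with $\Delta(\mathbb{G}q^\sigma-\mathbb{G}q^*)=\Delta(Sf^\sigma-Sf^*)=-(f^\sigma-f^*)$, the two data terms in that theorem become exactly $f^\sigma-f^*$ and $Sf^\sigma-Sf^*$, so the stability bound collapses to
$$\|q^\sigma-q^*\|_X\le C\big(\|f^\sigma-f^*\|_X+\|p\|_{L^\infty(\Om)}\|Sf^\sigma-Sf^*\|_X\big),$$
with $X=(H^1(\Om))^*$ when $s=0$ and $X=L^2(\Om)$ when $s=1$, and $C$ the stability constant depending only on $C_p,M_b,M,m_{\mathcal Q},T$. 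This is exactly the structure needed to produce the clean factor $\|p\|_{L^\infty(\Om)}$.

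Finally I would take expectations and insert Theorem \ref{thm:2.1}. Using Jensen's inequality to pass from its mean-square estimates to first-order ones, the embedding $\|\cdot\|_{(H^1(\Om))^*}\le\|\cdot\|_{L^2(\Om)}$, and the empirical-to-$L^2$ norm conversion already employed in that proof, the case $s=0$ gives $\mathbb{E}\|f^\sigma-f^*\|_{(H^1(\Om))^*}\lesssim\lambda^{1/4}\|f^*\|_{L^2(\Om)}$ and $\mathbb{E}\|Sf^\sigma-Sf^*\|_{(H^1(\Om))^*}\lesssim\lambda^{1/2}\|f^*\|_{L^2(\Om)}$, while the case $s=1$ gives $\mathbb{E}\|f^\sigma-f^*\|_{L^2(\Om)}\lesssim\lambda^{1/6}\|f^*\|_{H^1(\Om)}$ and $\mathbb{E}\|Sf^\sigma-Sf^*\|_{L^2(\Om)}\lesssim\lambda^{1/2}\|f^*\|_{H^1(\Om)}$, each under the stated optimal balance of $\lambda$. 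Assembling these gives the two asserted bounds. The main obstacle is the identity $\mathbb{G}q^\sigma-\mathbb{G}q^*=Sf^\sigma-Sf^*$, which is what allows the $\Delta(\mathbb{G}\hat q-\mathbb{G}\tilde q)$ term of Theorem \ref{thm-stability} to be turned into $f^\sigma-f^*$ with no loss; the remaining care lies in verifying $q^\sigma\in\mathcal D$ and in the norm bookkeeping, in particular that $L^2$-regularization ($s=0$) only controls $f^\sigma-f^*$ in $(H^1(\Om))^*$, forcing the weaker target norm, whereas $H^1$-regularization ($s=1$) controls it in $L^2(\Om)$.
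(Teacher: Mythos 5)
Your proposal is correct and follows essentially the route the paper intends: the paper gives no explicit proof, stating only that the result follows by ``combining Theorem \ref{thm-stability} and Theorem \ref{thm:2.1}'', and your argument supplies exactly that combination, with the bridging identity $\mathbb{G}q^{\sigma}-\mathbb{G}q^{*}=Sf^{\sigma}-Sf^{*}$ (obtained from the fixed-point relations and uniqueness for the coercive Robin problem $-\Delta v+pv=\cdot$) being the step the paper leaves implicit. The points you flag as needing care --- verifying $q^{\sigma}\in\mathcal D$ and the passage from the mean-square bounds of Theorem \ref{thm:2.1} to first moments via Jensen --- are likewise left unaddressed in the paper, so your treatment is, if anything, more complete.
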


%%%%%%%%%%%%%%%%%%%%%%%%%%%%%%%%%%%%%%%
\subsection{Probabilistic convergence with an exponential tail.}
%%%%%%%%%%%%%%%%%%%%%%%%%%%%%%%%%%%%%%%

In this subsection we assume the noises $e_i$, $i=1,2,\cdots,n$, are
independent and identically distributed sub-Gaussian random variables with parameter $\sigma>0$. A random variable $X$ is sub-Gaussian with parameter $\sigma$ if it satisfies
\beq\label{e1}
\mathbb{E}\left[e^{\lam(X-\mathbb{E}[X])}\right]\le e^{\frac 12\sigma^2\lam^2}\ \ \ \ \forall\lam\in\R.
\eeq
The probability distribution function of a sub-Gaussian random variable has a exponentially decaying tail, that is, if $X$ is a sub-Gaussian random variable, then
\beq\label{gg1}
\mathbb{P}(|X-\E [X]|\ge z)\le 2e^{-\frac 12 z^2/\sigma^2}\ \ \forall z>0.
\eeq

We will study the stochastic convergence of the error $\|Sf^*-S f^{\sigma}\|_n$ which characterizes the tail property of $\mathbb{P}(\|Sf^*-S f^{\sigma}\|_n\ge z)$ for $z>0$.

We will use several tools from the theory of empirical processes \cite{Vaart, Geer} for our analysis. The Orilicz norm $\|X\|_\psi$ of a random variable $X$ is defined as
\beq\label{e2}
\|X\|_\psi=\inf\left\{C>0:\mathbb{E}\left[\psi\left(\frac{|X|}C\right)\right]\le 1\right\}.
\eeq
We will use the $\|X\|_{\psi_2}$ norm with
$\psi_2(t)=e^{t^2}-1$ for any $t>0$. By definition,
\beq\label{e3}
\mathbb{P}(|X|\ge z)\le 2\,e^{-z^2/\|X\|_{\psi_2}^2}\ \ \ \ \forall z>0.
\eeq
 The following lemma from \cite[Lemma 2.2.1]{Vaart} is the inverse of \ref{e3}.
\begin{lemma}\label{lem:4.1}
If there exist positive constants $C,K$ such that $\mathbb{P}(|X|>z)\le Ke^{-Cz^2},\ \forall z>0$, then $\|X\|_{\psi_2}\le\sqrt{(1+K)/C}$.
\end{lemma}

Let $T$ be a semi-metric space with the semi-metric $\sd$ and $\{X_t:t\in T\}$ be a random process indexed by $T$. The random process $\{X_t:t\in T\}$ is called sub-Gaussian if
\beq\label{e41}
\mathbb{P}(|X_s-X_t|>z)\le 2e^{-\frac 12 z^2/\sd(s,t)^2}\ \ \ \ \forall s,t\in T, \ \ z>0.
\eeq
For a semi-metric space $(T,\sd)$ and $\vep>0$, the covering number $N(\vep,T,\sd)$ is the minimum number of $\vep$-balls that cover $T$ and $\log N(\vep,T,\sd)$ is called the covering entropy which is an important quantity to characterize the complexity of the set $T$.  
The following maximal inequality \cite[Section 2.2.1]{Vaart} plays an important role in our analysis.
\begin{lemma}\label{lem:4.2}
If $\{X_t:t\in T\}$ is a separable sub-Gaussian random process, then 
\ben
\|\sup_{s,t\in T}|X_s-X_t|\|_{\psi_2}\le K\int^{\diam\, T}_0\sqrt{\log N\Big(\frac\vep 2,T,\sd\Big)}\ d\vep.
\een
Here $K>0$ is some constant.
\end{lemma}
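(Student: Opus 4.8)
The plan is to prove the estimate by the classical chaining argument (Dudley's entropy bound), using as building blocks a maximal inequality over finite index sets and a successive-approximation scheme governed by the covering numbers $N(\vep,T,\sd)$. A preliminary observation is that the process increments are controlled in the $\psi_2$ norm: combining the sub-Gaussian tail \ref{e41} with Lemma \ref{lem:4.1} (applied with $K=2$ and $C=1/(2\,\sd(s,t)^2)$) gives $\|X_s-X_t\|_{\psi_2}\le\sqrt6\,\sd(s,t)$ for all $s,t\in T$.

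First I would record the finite-set maximal inequality: for random variables $Y_1,\dots,Y_m$ with $\|Y_i\|_{\psi_2}\le a$, one has $\|\max_{i\le m}|Y_i|\|_{\psi_2}\le K\,a\sqrt{\log(1+m)}$ with a universal constant $K$. This is the standard Orlicz-norm bound (see \cite{Vaart}); it follows from the definition \ref{e2} together with the convexity and quadratic growth of $\psi_2(t)=e^{t^2}-1$, since for any $C$ one has $\mathbb{E}[\psi_2(\max_i|Y_i|/C)]\le\sum_{i}\mathbb{E}[\psi_2(|Y_i|/C)]$, and exploiting the growth of $\psi_2$ one calibrates $C$ to a multiple of $a\sqrt{\log(1+m)}$ so that the right-hand side stays below $1$.

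Next I would carry out the chaining. Fix a base point $t_0\in T$ and, for each $j\ge0$, select a minimal $\eta_j$-net $T_j\subset T$ with $\eta_j=2^{-j}\diam T$, so that $|T_j|=N(\eta_j,T,\sd)$; write $\pi_j(t)$ for a nearest point of $T_j$ to $t$. Using separability I would justify the telescoping identity
\ben
X_t-X_{t_0}=\sum_{j\ge1}\big(X_{\pi_j(t)}-X_{\pi_{j-1}(t)}\big),
\een
and estimate level $j$ as follows. Since $\sd(\pi_j(t),\pi_{j-1}(t))\le\eta_j+\eta_{j-1}\le3\eta_j$, the increment bound gives $\|X_{\pi_j(t)}-X_{\pi_{j-1}(t)}\|_{\psi_2}\le 3\sqrt6\,\eta_j$; and as $t$ ranges over $T$ the pair $(\pi_j(t),\pi_{j-1}(t))$ takes at most $|T_j|\,|T_{j-1}|\le N(\eta_j,T,\sd)^2$ distinct values, so the finite-set inequality yields
\ben
\Big\|\sup_{t\in T}\big|X_{\pi_j(t)}-X_{\pi_{j-1}(t)}\big|\Big\|_{\psi_2}\le K'\,\eta_j\sqrt{\log N(\eta_j,T,\sd)}.
\een
Summing over $j$ and using the triangle inequality for $\|\cdot\|_{\psi_2}$ gives $\big\|\sup_{t\in T}|X_t-X_{t_0}|\big\|_{\psi_2}\le K'\sum_{j\ge1}\eta_j\sqrt{\log N(\eta_j,T,\sd)}$.

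Finally I would convert the series into the entropy integral. Because $\vep\mapsto\sqrt{\log N(\vep,T,\sd)}$ is nonincreasing and $\eta_j-\eta_{j+1}=\eta_j/2$, on each interval $[\eta_{j+1},\eta_j]$ the integrand dominates $\sqrt{\log N(\eta_j,T,\sd)}$, whence $\eta_j\sqrt{\log N(\eta_j,T,\sd)}\le 2\int_{\eta_{j+1}}^{\eta_j}\sqrt{\log N(\vep,T,\sd)}\,d\vep$; summing telescopes the intervals and, after passing from $N(\vep,T,\sd)$ to the larger $N(\vep/2,T,\sd)$, bounds the series by $2\int_0^{\diam T}\sqrt{\log N(\vep/2,T,\sd)}\,d\vep$. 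Since $\sup_{s,t}|X_s-X_t|\le2\sup_t|X_t-X_{t_0}|$, the claimed inequality follows with a universal $K$. I expect the main obstacle to be the first step—obtaining the clean $\sqrt{\log(1+m)}$ dependence in the Orlicz maximal inequality and propagating a single universal constant through the sum-to-integral comparison—while the separability hypothesis is exactly what makes the countable telescoping and the reduction of the supremum over $T$ to suprema over the nets $T_j$ rigorous.
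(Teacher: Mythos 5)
The paper does not prove this lemma at all: it is quoted directly from van der Vaart--Wellner \cite[Section 2.2.1]{Vaart} (it is their Corollary 2.2.8), so there is no in-paper argument to compare against. Your chaining proof is essentially the textbook proof of that cited result, and it is correct: the reduction of the increment tail \ref{e41} to $\|X_s-X_t\|_{\psi_2}\le\sqrt6\,\sd(s,t)$ via Lemma \ref{lem:4.1} is computed correctly, the telescoping over dyadic nets with $|T_0|=1$ is legitimate (a ball of radius $\diam T$ about any point covers $T$, and separability justifies $X_{\pi_j(t)}\to X_t$ along a countable dense set), the count of at most $N(\eta_j,T,\sd)^2$ link pairs per level only costs a factor $\sqrt2$ inside the square root, and the sum-to-integral comparison using monotonicity of $\vep\mapsto\sqrt{\log N(\vep,T,\sd)}$ recovers the stated integral with $N(\vep/2,T,\sd)$. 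The one place you gloss is the finite-set Orlicz maximal inequality: the naive bound $\mathbb{E}[\psi_2(\max_i|Y_i|/C)]\le\sum_i\mathbb{E}[\psi_2(|Y_i|/C)]$ by itself only yields $m$ on the right-hand side, and extracting the $\sqrt{\log(1+m)}$ dependence genuinely requires the submultiplicativity-type growth condition $\psi_2(x)\psi_2(y)\le\psi_2(cxy)$ for $x,y\ge1$ (van der Vaart--Wellner, Lemma 2.2.2), not just convexity; since that is itself a standard quoted lemma, this is an imprecision rather than a gap, but if you intend the proof to be self-contained you should either prove that step or cite it explicitly.
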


The following result on the estimation of the covering entropy of Sobolev spaces is due to Birman-Solomyak \cite{Birman}.
\begin{lemma}\label{lem:4.3}
Let $Q$ be the unit square in $\R^d$ and $BW^{\alpha,p}(Q)$ be the unit sphere of the Sobolev space $W^{\alpha,p}(Q)$,
where $\alpha> 0$, $p\ge 1$. Then for $\vep>0$ sufficient small, the entropy
\ben
\log N(\vep, BW^{\alpha,p}(Q), \|\cdot\|_{L^q(Q)})\le C\vep^{-d/\alpha},
\een
where if $\alpha p>d$, $1\le q\le\infty$, otherwise if $\alpha p\le d$, $1\le q < q^*$ with $q^*=p(1-\alpha p/d)^{-1}$.
\end{lemma}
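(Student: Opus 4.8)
The plan is to build an economical $\vep$-net for $BW^{\alpha,p}(Q)$ in the $L^q$ norm out of piecewise polynomials and to bound the logarithm of its cardinality by $\vep^{-d/\alpha}$; equivalently, one shows that the entropy numbers of the embedding $W^{\alpha,p}(Q)\hookrightarrow L^q(Q)$ obey $e_k\lesssim k^{-\alpha/d}$, which is exactly $\log N(\vep)\lesssim\vep^{-d/\alpha}$. Two reductions clear the ground. On the bounded domain $Q$ one has $\|\cdot\|_{L^{q_1}}\lesssim\|\cdot\|_{L^{q_2}}$ for $q_1\le q_2$, so any net in $L^{q_2}$ is, after rescaling, a net in $L^{q_1}$; hence it suffices to treat the largest admissible exponent in each case. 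One also separates the regime $\alpha p>d$, where $W^{\alpha,p}(Q)\hookrightarrow C^0(\bar Q)$ and one may work in the sup-norm, from the subcritical regime $\alpha p\le d$, where one must argue in $L^q$ with $q<q^*$.

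The local building block is polynomial approximation. On a cube of side $h$ the Bramble--Hilbert (Deny--Lions) lemma gives a polynomial $P$ of degree $<\alpha$ with $\|f-P\|_{L^p}\lesssim h^{\alpha}|f|_{W^{\alpha,p}}$ on that cube, and a scaled Sobolev inequality upgrades this to $\|f-P\|_{L^q}\lesssim h^{\beta}|f|_{W^{\alpha,p}}$ with $\beta=\alpha-d/p+d/q$. Assembling these on a single uniform partition of $Q$ into $M^d$ cubes and quantising the boundedly many coefficients per cube produces a net whose log-cardinality is governed by $M^d$, with $M$ forced by the requirement $h^{\beta}\lesssim\vep$. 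This yields only $\vep^{-d/\beta}$, the linear (Kolmogorov-width) rate, and $\beta$ drops strictly below $\alpha$ precisely when $q>p$. A fixed linear scheme therefore cannot reach the sharp exponent; the only case where a single partition already suffices is the H\"older ball $p=\infty$, for which $\beta=\alpha$ and the classical Kolmogorov--Tikhomirov net (quantising the small increments of the top-order coefficients between neighbouring cubes) gives $\vep^{-d/\alpha}$ directly.

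The decisive idea, due to Birman--Solomyak, is to make the approximation \emph{adaptive}: refine the partition more finely where $f$ carries large local Sobolev energy, and form the net from all piecewise polynomials over all admissible refined partitions. Since $\sum_{\text{cubes}}|f|_{W^{\alpha,p}(\text{cube})}^p\le1$, the number of cubes that can carry non-negligible energy at each dyadic scale is controlled, and summing the quantisation cost over scales keeps the total log-cardinality at $\lesssim\vep^{-d/\alpha}$, the integrability indices entering only through constants. Equivalently, one may pass through a wavelet (or spline) isomorphism identifying $W^{\alpha,p}(Q)$ with a weighted sequence space, turning the embedding into a diagonal operator; the problem then reduces to the entropy numbers of the finite-dimensional identities $id:\ell_p^m\to\ell_q^m$, and invoking Sch\"utt's theorem for these and summing over the dyadic frequency blocks again yields $e_k\lesssim k^{-\alpha/d}$.

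I expect the crux to be exactly the integrability-independence of the exponent: that losing integrability (from $L^p$ control of the top derivatives to an $L^q$ target with $q>p$) costs nothing on the logarithmic entropy scale. This is the genuine content of the Birman--Solomyak theorem and the point at which the naive single-partition count must give way to the adaptive multiscale (or sequence-space) argument; it reflects the combinatorial, rather than linear-approximation, nature of covering numbers, which see only the smoothness gap $\alpha$ and the dimension $d$. The remaining ingredients---the local Bramble--Hilbert estimate, the scaled Sobolev inequalities, and the coefficient bookkeeping---are standard, and I would treat them as routine.
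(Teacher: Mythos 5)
The first thing to say is that the paper does not prove this lemma at all: it is quoted verbatim from Birman--Solomyak (the reference \cite{Birman}) and used as a black box in the proof of Theorem \ref{thm:4.1}. So there is no in-paper argument to compare yours against; the relevant benchmark is the classical proof itself, and your outline is faithful to it. You correctly identify the two standard routes (adaptive piecewise-polynomial nets \`a la Birman--Solomyak, or the wavelet/sequence-space reduction to entropy numbers of $id:\ell_p^m\to\ell_q^m$ via Sch\"utt), you correctly diagnose why a single uniform partition only yields the width rate $\vep^{-d/\beta}$ with $\beta=\alpha-d/p+d/q<\alpha$ when $q>p$, and you correctly locate the content of the theorem in the integrability-independence of the exponent.

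That said, as a proof the proposal stops exactly where the work begins. The sentence ``the number of cubes that can carry non-negligible energy at each dyadic scale is controlled, and summing the quantisation cost over scales keeps the total log-cardinality at $\lesssim\vep^{-d/\alpha}$'' is the entire theorem, asserted rather than proved: one must (i) specify the admissible family of refined partitions and bound its cardinality, (ii) count, for each dyadic energy level $2^{-jp}$, how many cubes can carry that much of the budget $\sum_{\rm cubes}|f|^p_{W^{\alpha,p}({\rm cube})}\le 1$, (iii) choose the quantisation mesh per cube as a function of its size and local energy so that the $\ell^q$-aggregation of local errors still meets the global tolerance $\vep$ (this is where $q<q^*$ is genuinely needed in the subcritical case and where the constant degenerates as $q\uparrow q^*$ -- a point your sketch does not address), and (iv) verify that the geometric sum over scales converges to $\vep^{-d/\alpha}$ without a logarithmic loss. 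The same remark applies to the alternative route: Sch\"utt's two-regime asymptotics for $e_k(id:\ell_p^m\to\ell_q^m)$ and the summation over frequency blocks are named but not executed. None of this is a wrong turn -- it is the right skeleton -- but for the purposes of this paper the honest statement is the one the authors make: the lemma is a deep quoted result, and reproducing its proof would require carrying out precisely the multiscale counting you have deferred.
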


The following lemmas are proved in \cite{Chen-Zhang2022}.
\begin{lemma}\label{lem:4.5}
$\{E_n(u):=(e,Su)_n: u\in H^s(\Om)\}$ is a sub-Gaussian random process with respect to the semi-distance $\sd(u,v)=\|Su-Sv\|_n^*$, where $\|Su\|^*_n:=\sigma n^{-1/2}\|Su\|_n$.
\end{lemma}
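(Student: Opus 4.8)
The plan is to reduce the statement to the elementary fact that a fixed linear combination of independent mean-zero sub-Gaussian variables is again sub-Gaussian, and then to read off its parameter so that it coincides exactly with the semi-distance $\sd$. First I would compute the increment of the process along two indices $u,v\in H^s(\Om)$. Writing $w=u-v$ and using the linearity of the solution operator $S$ together with the definition of the discrete pairing $(\cdot,\cdot)_n$, one gets
\[
E_n(u)-E_n(v)=(e,S(u-v))_n=\frac1n\sum_{i=1}^n e_i\,Sw(x_i).
\]
Thus the increment is a deterministic linear combination $\sum_i a_i e_i$ of the independent noises with weights $a_i=n^{-1}Sw(x_i)$, and the whole argument reduces to controlling the moment generating function of this scalar random variable.

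Next I would bound that moment generating function directly. Using independence of the $e_i$ and the defining sub-Gaussian inequality \ref{e1} (with $\E[e_i]=0$) applied factor by factor, for any $\lam\in\R$,
\[
\E\!\left[e^{\lam(E_n(u)-E_n(v))}\right]=\prod_{i=1}^n\E\!\left[e^{(\lam/n)\,Sw(x_i)\,e_i}\right]\le\prod_{i=1}^n e^{\frac12\sigma^2(\lam/n)^2 (Sw(x_i))^2}=e^{\frac12\sigma^2\frac{\lam^2}{n^2}\sum_{i=1}^n (Sw(x_i))^2}.
\]
The decisive bookkeeping step is to recognise that $\sum_{i=1}^n (Sw(x_i))^2=n\|Sw\|_n^2$ by the definition of the empirical semi-norm, so the exponent collapses to $\frac12\lam^2\big(\sigma n^{-1/2}\|Sw\|_n\big)^2=\frac12\lam^2\,\sd(u,v)^2$. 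Hence the centred increment $E_n(u)-E_n(v)$ belongs to the sub-Gaussian class \ref{e1} with parameter exactly $\sd(u,v)=\|S(u-v)\|_n^*$.

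Finally I would invoke the scalar tail estimate \ref{gg1}, which holds for any sub-Gaussian variable with the corresponding parameter, to conclude that for every $z>0$,
\[
\mathbb{P}\big(|E_n(u)-E_n(v)|\ge z\big)\le 2\,e^{-\frac12 z^2/\sd(u,v)^2},
\]
which is precisely the defining inequality \ref{e41} of a sub-Gaussian random process relative to $\sd$. I do not expect a genuine obstacle here, since the result is essentially the standard increment estimate for noise-driven empirical processes; the only point demanding real care is the normalisation. One must keep the factor $1/n$ arising from the discrete inner product $(\cdot,\cdot)_n$ distinct from the factor $n^{-1/2}$ hidden inside $\|\cdot\|_n^*$, and verify that they combine to give exactly $\sigma n^{-1/2}\|S(u-v)\|_n$ and no other power of $n$; getting this matching right is what yields the constant $\tfrac12$ in the exponent required by \ref{e41}. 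It is also worth noting that $\sd$ is only a semi-distance, vanishing whenever $Su$ and $Sv$ agree at all sample points $x_i$, which is consistent with the statement and needs no further separateness of the process.
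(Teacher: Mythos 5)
Your proof is correct, and the normalisation bookkeeping checks out: with $a_i=n^{-1}Sw(x_i)$ one gets $\sum_i a_i^2=n^{-1}\|Sw\|_n^2$, so the sub-Gaussian parameter of the increment is exactly $\sigma n^{-1/2}\|S(u-v)\|_n=\sd(u,v)$, and the tail bound \ref{gg1} then gives precisely \ref{e41}. The paper itself does not prove this lemma but defers to the reference \cite{Chen-Zhang2022}; your argument is the standard one that that citation stands in for, so there is no divergence of approach to report.
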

\begin{lemma}\label{lem:4.6}
If $X$ is a random variable which satisfies
\ben
\mathbb{P}(|X|>\alpha (1+z))\le C_1e^{-z^2/K_1^2},\ \ \forall\alpha>0, z\ge 1 ,
\een
where $C_1, K_1$ are some positive constants, then $\|X\|_{\psi_2}\le C(C_1,K_1)\alpha$ for some constant $C(C_1,K_1)$ depending only on $C_1,K_1$.
\end{lemma}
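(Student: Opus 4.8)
The plan is to reduce the statement to Lemma \ref{lem:4.1} by exploiting the positive homogeneity of the Orlicz norm. I read the hypothesis as holding for a \emph{fixed} scale $\alpha>0$: the quantifier ``$\forall\alpha>0$'' must be understood this way, since taken literally (letting $\alpha\to 0^+$ and then $z\to\infty$) it would force $X=0$ almost surely and make the claim degenerate. Setting $Y:=X/\alpha$ and recalling from the definition \ref{e2} that $\|cX\|_{\psi_2}=|c|\,\|X\|_{\psi_2}$ for every scalar $c$ (immediate from the infimum definition after rescaling the competitor $C$), it suffices to prove $\|Y\|_{\psi_2}\le C(C_1,K_1)$; multiplying back by $\alpha$ then yields $\|X\|_{\psi_2}=\alpha\|Y\|_{\psi_2}\le C(C_1,K_1)\,\alpha$.

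Rewriting the hypothesis in terms of $Y$ gives $\mathbb{P}(|Y|>1+z)\le C_1 e^{-z^2/K_1^2}$ for all $z\ge 1$. I would substitute $w=1+z$, so that $w$ ranges over $[2,\infty)$, to obtain $\mathbb{P}(|Y|>w)\le C_1 e^{-(w-1)^2/K_1^2}$ for $w\ge 2$. On this range $w-1\ge w/2$, hence $(w-1)^2\ge w^2/4$, and therefore $\mathbb{P}(|Y|>w)\le C_1 e^{-w^2/(4K_1^2)}$ for all $w\ge 2$. This is already a genuine Gaussian-type tail, but only for large $w$.

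The remaining step is to upgrade this to a bound valid for \emph{every} $w>0$, which is exactly the form required by Lemma \ref{lem:4.1}. For $0<w<2$ I would simply use the trivial estimate $\mathbb{P}(|Y|>w)\le 1\le e^{1/K_1^2}\,e^{-w^2/(4K_1^2)}$, where the last inequality holds because $w^2\le 4$ forces $w^2/(4K_1^2)\le 1/K_1^2$ on this range. Combining the two regimes gives $\mathbb{P}(|Y|>w)\le K'e^{-C'w^2}$ for all $w>0$, with $C'=1/(4K_1^2)$ and $K'=\max\{C_1,e^{1/K_1^2}\}$, both depending only on $C_1$ and $K_1$. Lemma \ref{lem:4.1} then yields $\|Y\|_{\psi_2}\le\sqrt{(1+K')/C'}=:C(C_1,K_1)$, and the homogeneity step closes the argument.

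The computation is elementary throughout; the only point demanding care is the passage from the truncated tail (valid only for $z\ge 1$, i.e.\ $w\ge 2$) to a global Gaussian tail on all of $(0,\infty)$. Concretely, one must both absorb the shift by bounding $(w-1)^2$ from below by $w^2/4$ and patch the small-$w$ region through the trivial probability bound, so as to produce explicit constants feeding into Lemma \ref{lem:4.1}. Neither is a real obstacle, but both are needed to keep the dependence of $C(C_1,K_1)$ on the data transparent.
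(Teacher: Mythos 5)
Your proof is correct. The paper itself does not prove this lemma (it defers to the cited reference \cite{Chen-Zhang2022}), so there is no in-paper argument to compare against; your route --- rescale to $Y=X/\alpha$ via positive homogeneity of the Orlicz norm, absorb the shift by $(w-1)^2\ge w^2/4$ for $w\ge 2$, patch the range $0<w<2$ with the trivial bound $\mathbb{P}(|Y|>w)\le 1\le e^{1/K_1^2}e^{-w^2/(4K_1^2)}$, and invoke Lemma \ref{lem:4.1} --- is the standard one and all constants check out. Your reading of the quantifier ``$\forall\alpha>0$'' as a fixed-scale hypothesis is also the right one: it is exactly how the lemma is invoked in the proof of Theorem \ref{thm:4.1}, with $\alpha=\lambda^{1/2}\rho_0$, and the literal reading would indeed force $X=0$ a.s.
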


\begin{theorem}\label{thm:4.1}
Let $f^{\sigma}\in L^2(\Om)$ be the solution of \ref{p1}. Denote by $\rho_0=\|f^*\|_{H^s(\Omega)}+\sigma n^{-1/2}$. If we take
$\lambda^{1/2+d/4/(2+s)}= O(\sigma n^{-1/2}\rho_0^{-1})$,
then there exists a constant $C>0$ such that
\ben
\mathbb{P}(\|Sf^{\sigma}-Sf^*\|_n\ge \lam^{1/2}\rho_0z)\le Ce^{-Cz^2},\ \ \mathbb{P}(\|f^{\sigma}\|_{L^2(\Om)}\ge \rho_0z)\le Ce^{-Cz^2}.
\een

If $n^{(4+2s)/d}\lambda\geq 1$,  for $s=0$, we have
\begin{equation}\label{P-H-1-1}
\mathbb{P}(\|f^{\sigma}-f^*\|_{(H^{1}(\Omega))^*} \geq C \lambda^{1/4} \rho_0 z ) \leq Ce^{-Cz^2}.
\end{equation}

For $s=1$, we have
\begin{equation}\label{P-L2-1-1}
\mathbb{P}(\|f^{\sigma}-f^*\|_{L^2(\Omega)} \geq C \lambda^{1/6} \rho_0 z ) \leq Ce^{-Cz^2}.
\end{equation}
\end{theorem}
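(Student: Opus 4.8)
The plan is to run the argument of Theorem \ref{thm:2.1} again, but to replace the second-moment control of the empirical noise term by an Orlicz-norm (sub-Gaussian) control obtained through chaining. I would start from the same deterministic inequality \ref{x4},
$$\lla f^{\sigma}-f^*\rra_{\lam}\le \lam^{1/2}\|f^*\|_{H^s(\Om)}+\sup_{v\in V_n}\frac{(e,Sv)_n}{\lla v\rra_{\lam}},$$
where the reduction of the supremum from $H^s(\Om)$ to the finite-dimensional $V_n$ is exactly the one already used via Lemma \ref{lem:2.2}. Writing $E_n(v)=(e,Sv)_n$ and $T=\{v\in V_n:\lla v\rra_{\lam}\le 1\}$, homogeneity of $E_n$ and of $\lla\cdot\rra_\lam$ shows the supremum equals $\sup_{v\in T}E_n(v)$, and since $0\in T$ with $E_n(0)=0$ it is dominated by $\sup_{s,t\in T}|E_n(s)-E_n(t)|$.

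The central step, and the main obstacle, is to bound this supremum in $\|\cdot\|_{\psi_2}$ by the Dudley entropy integral of Lemma \ref{lem:4.2}. By Lemma \ref{lem:4.5} the process $E_n$ is sub-Gaussian for the semi-metric $\sd(u,v)=\sigma n^{-1/2}\|Su-Sv\|_n$, so everything rests on estimating $\log N(\vep,T,\sd)$. Here I would exploit that each $v\in T$ satisfies $\|v\|_{H^s(\Om)}\le\lam^{-1/2}$, hence $\|Sv\|_{H^{s+2}(\Om)}\le C\lam^{-1/2}$, so $\{Sv:v\in T\}$ lies in a ball of radius $C\lam^{-1/2}$ of $H^{s+2}(\Om)$; the Birman--Solomyak estimate of Lemma \ref{lem:4.3}, combined with the equivalence of $\|\cdot\|_n$ and $\|\cdot\|_{L^2(\Om)}$ on that Sobolev ball, gives $\log N(\vep,T,\sd)\le C\lam^{-d/2/(2+s)}\sigma^{d/(2+s)}n^{-d/2/(2+s)}\vep^{-d/(2+s)}$. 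Since $\diam T\le 2\sigma n^{-1/2}$ and the exponent $d/(2+s)<2$ is guaranteed by $1\le d\le 3$, the entropy integral converges, and after collecting the powers of $\lam$, $\sigma$ and $n$ it collapses to $\big\|\sup_{v\in T}E_n(v)\big\|_{\psi_2}\le C\lam^{-d/4/(2+s)}\sigma n^{-1/2}$. With the stated choice $\lam^{1/2+d/4/(2+s)}=O(\sigma n^{-1/2}\rho_0^{-1})$ this becomes $C\lam^{1/2}\rho_0$; tracking all the $\lam$-, $\sigma$- and $n$-dependencies through the entropy computation is the delicate part.

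From here the two basic tail bounds follow by working entirely with $\psi_2$ norms. Subadditivity of $\|\cdot\|_{\psi_2}$ applied to \ref{x4} gives $\big\|\lla f^{\sigma}-f^*\rra_{\lam}\big\|_{\psi_2}\le C(\lam^{1/2}\|f^*\|_{H^s(\Om)}+\lam^{1/2}\rho_0)\le C\lam^{1/2}\rho_0$. Since $\|Sv\|_n\le\lla v\rra_{\lam}$ and $\lam^{1/2}\|v\|_{H^s(\Om)}\le\lla v\rra_{\lam}$ hold pointwise for each realization, monotonicity of the Orlicz norm yields $\big\|\,\|S(f^{\sigma}-f^*)\|_n\big\|_{\psi_2}\le C\lam^{1/2}\rho_0$ and, using $\|f^*\|_{H^s(\Om)}\le\rho_0$, $\big\|\,\|f^{\sigma}\|_{L^2(\Om)}\big\|_{\psi_2}\le C\rho_0$. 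The two exponential tail statements are then immediate from \ref{e3}.

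Finally, the refined $(H^1(\Om))^*$ and $L^2(\Om)$ estimates are obtained by transferring the interpolation inequalities of Theorem \ref{thm:2.1} to the Orlicz level. Under $n^{(4+2s)/d}\lam\ge 1$ the $\|\cdot\|_n$--$\|\cdot\|_{L^2(\Om)}$ equivalence converts the $n$-norm bound into $\big\|\,\|S(f^{\sigma}-f^*)\|_{L^2(\Om)}\big\|_{\psi_2}\le C\lam^{1/2}\rho_0$, the $H^{s+2}$ correction being absorbed into the leading $\lam\rho_0^2$ term precisely because $n^{-2(2+s)/d}\le\lam$. Taking square roots of the interpolation inequality then gives, for $s=0$ and $\theta=\lam^{1/2}$, $\|S(f^{\sigma}-f^*)\|_{H^1(\Om)}\le C\lam^{-1/4}\|S(f^{\sigma}-f^*)\|_{L^2(\Om)}+C\lam^{1/4}\|S(f^{\sigma}-f^*)\|_{H^2(\Om)}$, and subadditivity of $\|\cdot\|_{\psi_2}$ with $\|S(f^{\sigma}-f^*)\|_{H^2(\Om)}\le C\|f^{\sigma}-f^*\|_{L^2(\Om)}$ produces $\big\|\,\|S(f^{\sigma}-f^*)\|_{H^1(\Om)}\big\|_{\psi_2}\le C\lam^{1/4}\rho_0$, whence \ref{P-H-1-1} via $\|f^{\sigma}-f^*\|_{(H^1(\Om))^*}\le C\|S(f^{\sigma}-f^*)\|_{H^1(\Om)}$ and \ref{e3}. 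The case $s=1$ is identical, now with $\theta=\lam^{1/3}$ in the $H^2$--$H^3$ interpolation \ref{H2-H3} and $\|f^{\sigma}-f^*\|_{L^2(\Om)}\le C\|S(f^{\sigma}-f^*)\|_{H^2(\Om)}$, which yields \ref{P-L2-1-1} and completes the plan.
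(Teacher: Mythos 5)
Your proposal is correct in substance, but it reaches the two basic tail bounds by a genuinely different route than the paper. The paper uses a peeling (slicing) argument: it partitions the sample space into shells $F_{ij}$ according to the dyadic sizes of $\|Sf^{\sigma}-Sf^*\|_n$ and $\|f^{\sigma}\|_{L^2(\Omega)}$, applies the maximal inequality of Lemma \ref{lem:4.2} \emph{locally} on each shell (with Birman--Solomyak radius $2^j\rho$ and diameter $2^{i+1}\sigma n^{-1/2}\delta$), uses the basic inequality \ref{g1} to show that $f^{\sigma}\in F_{ij}$ forces the local supremum of the noise process to be large, sums the resulting doubly geometric tails over $i,j$, and only at the very end converts the tail bound into an Orlicz bound via Lemma \ref{lem:4.6}. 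You instead perform a single global chaining estimate over the unit ball $T$ of the energy norm $\lla\cdot\rra_{\lam}$, using $\|v\|_{H^s(\Omega)}\le\lam^{-1/2}$ on $T$ to control the entropy and $\|Sv\|_n\le 1$ to truncate the Dudley integral at $2\sigma n^{-1/2}$, and then push everything through the pathwise inequality \ref{x4} by subadditivity and monotonicity of $\|\cdot\|_{\psi_2}$. Your entropy integral converges precisely because $d/(2(2+s))<1$ for $d\le 3$, and it reproduces the same rate $C\sigma n^{-1/2}\lam^{-d/(4(2+s))}=C\lam^{1/2}\rho_0$ that the peeling argument yields; your treatment of the refined $(H^1)^*$ and $L^2$ estimates at the Orlicz level matches what the paper sketches. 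What each approach buys: yours is shorter, avoids the double sum, and makes the parallel with the expectation proof of Theorem \ref{thm:2.1} transparent; the paper's peeling is the more robust standard machinery, since it localizes the entropy computation to the shell actually containing $f^{\sigma}$ and would survive in settings where the global entropy integral over $T$ fails to converge. One small imprecision to fix in your write-up: the covering number of $\{Sv:v\in T\}$ in $\|\cdot\|_n$ should be obtained by dominating $\|\cdot\|_n\le\|\cdot\|_{L^\infty(\Omega)}$ and invoking Lemma \ref{lem:4.3} with $q=\infty$ (legitimate since $(s+2)\cdot 2>d$), as the paper does, rather than through an ``equivalence of $\|\cdot\|_n$ and $\|\cdot\|_{L^2(\Omega)}$,'' which does not by itself transfer covering numbers.
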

\begin{proof} By \ref{e3} we only need to prove
\beq\label{xx2}
\|\|Sf^{\sigma}-Sf^*\|_n\|_{\psi_2}\le C\lam_n^{1/2}\rho_0,\ \ \||f^{\sigma}|\|_{\psi_2}\le C\rho_0.
\eeq
We will only prove the first estimate in \ref{xx2} by the peeling argument. The other estimate can be proved in a similar way.
It follows from \ref{p7} that
\beq\label{g1}
\|Sf^{\sigma}-Sf^*\|^2_n + \lambda \|f^{\sigma}\|_{L^2(\Omega)}^2 \leq 2(e,Sf^{\sigma}-Sf^*)_n + \lambda \|f^*\|_{L^2(\Omega)}^2.
\eeq
Let $\delta >0,\ \rho>0$ be two constants to be determined later, and
\beq\label{g2}
A_0=[0,\delta), A_i=[2^{i-1}\delta,2 ^i\delta), \ \ B_0=[0,\rho), B_j=[2^{j-1}\rho,2^j\rho),\ \ \ \ i,j\ge 1.
\eeq
For $i,j\ge 0$, define
\ben
F_{ij}= \{v \in L^2(\Omega):~  \|Sv-Sf^*\|_n \in A_i ~,~ \|v\|_{L^2(\Omega)} \in B_j \}.
\een
Then we have
\beq\label{g3}
\mathbb{P}(\|Sf^{\sigma}-Sf^*\|_n>\de)\le\sum_{i=1}^\infty\sum_{j=0}^\infty \mathbb{P}(f^{\sigma}\in F_{ij}).
\eeq
Now we estimate $\mathbb{P}(f^{\sigma}\in F_{ij})$. By Lemma \ref{lem:4.5}, $\{(e,Sv)_n:v\in L^2(\Om)\}$ is a sub-Gaussian random process with respect to the semi-distance $\sd(u,v)=\delta n^{-1/2}\|Su-Sv\|_n$. It is
easy to see that
\ben
\sup_{u,v\in F_{ij}}(\|Sf-Sf^*\|_n+\|Sv-Sf^*\|_n)\le 2\sigma n^{-1/2}\cdot 2^i\de.
\een
Then by the maximal inequality in Lemma \ref{lem:4.2} we have
\ben
\|\sup_{u\in F_{ij}}|(e,Su-Sf^*)_n|\|_{\psi_2}&\le& K\int^{\sigma n^{-1/2}\cdot 2^{i+1}\de}_0\sqrt{\log N\left(\frac\vep 2,F_{ij}, \sd\right)}\,d\vep\nn\\
&=&K\int^{\sigma n^{-1/2}\cdot 2^{i+1}\de}_0\sqrt{\log N\left(\frac\vep{2\sigma n^{-1/2}},F_{ij}, \|\cdot\|_n\right)}\,d\vep.
\een
By Lemma
\ref{lem:4.3} we have the estimate for the entropy
\ben
\log N\left(\frac\vep{2\sigma n^{-1/2}},F_{ij}, \|\cdot\|_n\right)\le \log N\left(\frac\vep{2\sigma n^{-1/2}},F_{ij}, \|\cdot\|_{L^\infty(\Om)}\right)
\le C\left(\frac{2\sigma n^{-1/2}\cdot2^j\rho}{\vep}\right)^{d/(2+s)}.
\een
Therefore,
\be
\|\sup_{u\in F_{ij}}|(e,Su-Sf^*)_n\|_{\psi_2}&\le&K\int^{\sigma n^{-1/2}\cdot 2^{i+1}\de}_0\left(\frac{2\sigma n^{-1/2}\cdot2^j\rho}{\vep}\right)^{d/2/(2+s)}\,d\vep\nn\\
&=&C\sigma n^{-1/2}(2^j\rho)^{d/2/(2+s)}(2^i\de)^{1-d/2/(2+s)}.\label{g5}
\ee
By \ref{g1} and \ref{e3} we have for $i,j \geq 1$:
\ben
\mathbb{P}(f^{\sigma}\in F_{ij})& \leq & \mathbb{P}(2^{2(i-1)}\delta^2 + \lambda 2^{2(j-1)}\rho^2 \leq 2 \mathop {\sup}\limits_{u \in F_{ij}}|(e,u-f^*)_n|
+ \lambda \rho^2_0 ) \\
& =& \mathbb{P}(2 \mathop {\sup}\limits_{u \in F_{ij}}|(e,Su-Sf^*)_n|\ge 2^{2(i-1)}\delta^2 + \lambda 2^{2(j-1)}\rho^2 - \lambda \rho^2_0)\\
&\le &2\exp \left[- \frac{1}{C\sigma^2 n^{-1}} \left(\frac{2^{2(i-1)}\delta^2 + \lambda 2^{2(j-1)}\rho^2-\lambda \rho^2_0}{ (2^i\delta)^{1-d/2/(2+s)} (2^j\rho)^{d/2/(2+s)}} \right)^2\right].
\een
Now we take
\beq\label{g6}
\delta^2=\lambda\rho_0^2 (1+z)^2,\ \rho=\rho_0, \ \ \mbox{where }z \ge 1.
\eeq
Since by assumption $\lambda^{1/2+d/4/(2+s)}=O(\sigma n^{-1/2}\rho_0^{-1})$ and $\sigma n^{-1/2}\rho_0^{-1}\leq 1$, we have
$\lam_n\le C$ for some constant. By some simple calculation we have for $i,j \geq 1$,
\ben
\mathbb{P}(f^{\sigma}\in F_{ij}) \le 2\exp \left[ - C \left(\frac{2^{2(i-1)}z(1+z) + 2^{2(j-1)}}{ (2^i (1+z))^{1-d/2/(2+s)} (2^j)^{d/2/(2+s)}} \right)^2\right].
\een
By using the elementary inequality $ab\le \frac 1p a^p+\frac 1q b^q$ for any $a,b>0, p,q>1, p^{-1}+q^{-1}=1$, we have
$(2^i (1+z))^{1-d/2/(2+s)} (2^j)^{d/2/(2+s)}\le C( (1+z)2^i+2^j)$. Thus
\ben
\mathbb{P}(f^{\sigma}\in F_{ij})\le 2\exp \left[ - C (2^{2i} z^2 + 2^{2j}) \right].
\een
Similarly, one can prove for $i\geq 1, j=0$,
\ben
\mathbb{P}(f^{\sigma}\in F_{i0}) \le 2\exp \left[- C (2^{2i} z^2) \right].
\een
Therefore, since $\sum^\infty_{j=1}e^{-C(2^{2j})}\le e^{-C}< 1$ and $\sum^\infty_{i=1}e^{-C(2^{2i}z^2)}\le e^{-Cz^2}$, we obtain finally
\ben
\sum_{i=1}^\infty\sum_{j=0}^\infty \mathbb{P}(f^{\sigma}\in F_{ij}) \le 2\sum_{i=1}^\infty\sum_{j=1}^\infty e^{- C (2^{2i} z^2 + 2^{2j})}+2\sum^\infty_{i=1}e^{- C (2^{2i} z^2)}\le 4e^{-Cz^2}.
\een
Now inserting the estimate to \ref{g3} we have
\beq\label{g7}
\mathbb{P}(\|Sf^{\sigma}-Sf^*\|_n>\lam^{1/2}\rho_0(1+z))\le 4e^{-Cz^2}\ \ \ \ \forall z\ge 1.
\eeq
This implies by using Lemma \ref{lem:4.6} that $\|\|Sf^{\sigma}-Sf^*\|_n\|_{\psi_2} \leq C \lambda^{1/2}\rho_0$, which is the first estimate in \ref{xx2}.
The proofs of \ref{P-H-1-1} and \ref{P-L2-1-1} are similar to \ref{H-1-1} and \ref{L2-1-1} in Theorem \ref{thm:2.1}. This completes the proof.
\end{proof}

Combining Theorem \ref{thm-stability} and Theorem \ref{thm:4.1}, we have the following distribution estimates with exponential tail for the inverse problem \textbf{P2}.
\begin{theorem}\label{thm-estimate-3.4}
Let $q^*$ be the fixed point of iteration \ref{fix-iteration} and $q^\sigma$ be the fixed point of iteration \ref{fix-iteration-step2}. Then, under the conditions in Theorem \ref{thm-stability}, there exists a constant $C>0$ depending only on $C_p$, $M_b$, $M$, $m_{\mathcal Q}$ and $T$ such that:
\begin{itemize}
\item [(i)] For the case of $s=0$ in \ref{p1}, it holds
$$
\mathbb{P}(\|q^{\sigma}-q^*\|_{(H^1(\Omega))^*} > (\lambda^{1/4}+\lambda^{1/2} \|p\|_{L^\infty(\Omega)} )\rho_0(1+z)) \le C e^{-Cz^2},
$$
where $\lam^{1/2+d/8}=O(\sigma n^{-1/2}\rho_0^{-1})$;

\item [(ii)]  For the case of $s=1$ in \ref{p1}, it holds
$$
\mathbb{P}(\|q^{\sigma}-q^*\|_{L^2(\Omega)} > (\lambda^{1/6}+\lambda^{1/2} \|p\|_{L^\infty(\Omega)} )\rho_0(1+z)) \le C e^{-Cz^2},
$$
where
$\lam^{1/2+d/12}=O(\sigma n^{-1/2}\rho_0^{-1})$.
\end{itemize} 
\end{theorem}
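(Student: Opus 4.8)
The plan is to convert the deterministic stability of Theorem \ref{thm-stability} into a distributional statement by feeding it the exponential-tail estimates of Theorem \ref{thm:4.1} and closing with a union bound. The bridge between the two theorems is the identity $\mathbb{G}q^\sigma=Sf^\sigma$ (while $\mathbb{G}q^*=Sf^*=g$): the reconstructed source reproduces exactly the reconstructed data under the forward map. First I would establish this identity. Since $q^\sigma$ is the fixed point of $K^\sigma$, definition \ref{operator-noise} gives $\partial_t u_m(x,T;q^\sigma)+f^\sigma+pSf^\sigma=q^\sigma u_e(x,T;q^\sigma)$, whereas evaluating \ref{PDE_um} at $t=T$ yields $\partial_t u_m(x,T;q^\sigma)-\Delta\mathbb{G}q^\sigma+p\mathbb{G}q^\sigma=q^\sigma u_e(x,T;q^\sigma)$. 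Subtracting these and recalling that $Sf^\sigma$ solves \ref{eqn:elliptic}, the difference $w:=\mathbb{G}q^\sigma-Sf^\sigma$ satisfies $-\Delta w+pw=0$ with $\B w=0$, so $w\equiv0$ by uniqueness of the elliptic problem. Hence $\mathbb{G}q^\sigma-\mathbb{G}q^*=Sf^\sigma-Sf^*$ and $\Delta(\mathbb{G}q^\sigma-\mathbb{G}q^*)=-(f^\sigma-f^*)$.

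With $q^\sigma,q^*\in\mathcal D$, I would then apply Theorem \ref{thm-stability} with $\hat q=q^\sigma$, $\tilde q=q^*$ and substitute these two identities, obtaining the pathwise bound $\|q^\sigma-q^*\|_{(H^1(\Omega))^*}\le C(\|f^\sigma-f^*\|_{(H^1(\Omega))^*}+\|p\|_{L^\infty(\Omega)}\|Sf^\sigma-Sf^*\|_{(H^1(\Omega))^*})$ for $s=0$, and the analogous $L^2$ bound for $s=1$. The first term is handled directly: estimate \ref{P-H-1-1} (resp. \ref{P-L2-1-1}) already provides the tail $\mathbb{P}(\|f^\sigma-f^*\|_{(H^1(\Omega))^*}\ge C\lambda^{1/4}\rho_0 z)\le Ce^{-Cz^2}$ (resp. with $\lambda^{1/6}$ in $L^2$), which furnishes the $\lambda^{1/4}$ (resp. $\lambda^{1/6}$) rate.

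For the second term I would first use $\|\cdot\|_{(H^1(\Omega))^*}\le\|\cdot\|_{L^2(\Omega)}$ and then the norm equivalence $\|u\|_{L^2(\Omega)}^2\le C(\|u\|_n^2+n^{-4/d}\|u\|_{H^2(\Omega)}^2)$ applied to $u=Sf^\sigma-Sf^*$. Bounding $\|Sf^\sigma-Sf^*\|_{H^2(\Omega)}\le C(\|f^\sigma\|_{L^2(\Omega)}+\|f^*\|_{L^2(\Omega)})$ and invoking the hypothesis $n^{(4+2s)/d}\lambda\ge1$ to absorb the correction factor $n^{-4/d}$ into $\lambda$, the two tails of Theorem \ref{thm:4.1} for $\|Sf^\sigma-Sf^*\|_n$ and for $\|f^\sigma\|_{L^2(\Omega)}$ combine (with $\|f^*\|_{L^2(\Omega)}\le\rho_0$ deterministic) to give $\|Sf^\sigma-Sf^*\|_{(H^1(\Omega))^*}\le C\lambda^{1/2}\rho_0(1+z)$ outside an event of probability at most $Ce^{-Cz^2}$. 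This produces the $\lambda^{1/2}\|p\|_{L^\infty(\Omega)}$ contribution.

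Finally I would assemble the result by a union bound: the event $\{\|q^\sigma-q^*\|_{(H^1(\Omega))^*}>(\lambda^{1/4}+\lambda^{1/2}\|p\|_{L^\infty(\Omega)})\rho_0(1+z)\}$ is contained in the union of the finitely many deviation events controlled above, each of probability at most $Ce^{-Cz^2}$, and the sum of these remains of the form $Ce^{-Cz^2}$. The case $s=1$ is identical with $(H^1(\Omega))^*$ replaced by $L^2(\Omega)$ throughout. I expect the main obstacle to be the second-term bookkeeping: transferring the empirical-norm control of $Sf^\sigma-Sf^*$ to a continuous norm forces one to dominate the $H^2$ correction uniformly in the noise realization, which is precisely where $n^{(4+2s)/d}\lambda\ge1$ is needed, and one must track the several combined tail events so that the final probability collapses cleanly to $Ce^{-Cz^2}$.
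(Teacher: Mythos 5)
Your proposal is correct and follows exactly the route the paper intends: the paper states this theorem with no written proof beyond ``combining Theorem \ref{thm-stability} and Theorem \ref{thm:4.1},'' and your argument is precisely that combination, with the bridging identity $\mathbb{G}q^\sigma=Sf^\sigma$ (proved via the elliptic uniqueness argument, mirroring Lemma \ref{lemma_equivalence}), the substitution into the stability estimate, the empirical-to-$L^2$ norm transfer using $n^{(4+2s)/d}\lambda\ge 1$, and the final union bound all supplying details the paper leaves implicit.
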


\section{Numerical reconstruction.}\label{section4}
In this section, we present some two-dimensional numerical results to illustrate the theoretical results. We set the discrete measurement points to be scattered but quasi-uniformly distributed in $\Omega$. The discrete noisy data are given by
$$
g_i^\sigma=g(x_i)+e_i, \quad i=1, 2, \cdots, n,
$$
where the noises $\{e_i\}_{i=1}^n$ are normal random variables with variance $\sigma$. We will follow the idea in Section \ref{section3-Stochastic} and design algorithms to solve the two sub-problems {\bf P1} and {\bf P2}. 

\subsection{Algorithms for solving problems {\bf P1} and {\bf P2}.}
Firstly, for {\bf P1}, Theorem \ref{thm:2.1} and Theorem \ref{thm:4.1} suggest that the optimal regularization parameter should be taken as
\beq\label{examo1}
\lam^{1/2+1/(4+2s)}=\sigma n^{-1/2}\|f^*\|_{H^s(\Om)}^{-1},\quad s\in\{0,1\}.
\eeq 
This is a prior estimate with knowledge of the true function $f^*$ and variance $\sigma$. Here we propose a self-consistent algorithm to determine the parameter $\lam$ without knowing $f^*$ and $\sigma$. In the algorithm we estimate $\|f^*\|_{H^s(\Om)}$ by $\|f^\sigma\|_{H^s(\Om}$ and $\sigma$ by $\|Sf^\sigma-g^\sigma\|_n$ since $\|Sf^\sigma-g^\sigma\|_n=\|e\|_n$ provides a good estimation of the variance $\sigma$ by the law of large numbers. Since $f^*$ and the noise level $\sigma$ are unknown, a natural choice for the initial value $\lam_0$ is $\lam_0^{1/2+1/(4+2s)}=n^{-1/2}, s\in\{0,1\}$. We summarize the above strategy as the following Algorithm \ref{alg1}.

\begin{algorithm}[ht]
{\bf Data:} The discrete noisy data $\{g_i^\sigma\}_{i=1}^n$; \\
{\bf Result:} Approximate $f^*$ and $Sf^*$.\\
Set an initial guess $\lam_0^{1/2+1/(4+2s)}=n^{-1/2}, s\in\{0,1\}$; \\
{\bf for} $j=0, 1, \cdots$, {\bf do}\\
Solve \ref{p1} for $f^\sigma$ with $\lam$ replaced by the current value of $\lam_{j}$ on the mesh;\\
Update
$$\lam_{j+1}^{1/2+1/(4+2s)}=\|Sf^\sigma-g^\sigma\|_n n^{-\frac{1}{2}}\|f^\sigma\|_{H^s(\Om)}^{-1};$$\\
{\bf end for} if $|\lam_{j}-\lam_{j+1}|<10^{-10}$;\\
$f^* \leftarrow f^\sigma$, $Sf^* \leftarrow Sf^\sigma$;\\
{\bf output:} The approximated $f^*$ and $Sf^*$.
\caption{A self-consistent algorithm for finding the optimal $\lambda$ in {\bf P1}}\label{alg1}
\end{algorithm} 

Next, for {\bf P2}, we follow the iterative scheme \ref{fix-iteration-step2} and conclude the inversion process as the following Algorithm \ref{alg2}.

\begin{algorithm}[ht]
{\bf Data:} The $f^\sigma$ and $Sf^\sigma$ reconstructed from Algorithm \ref{alg1}; \\
{\bf Result:} Approximate $q^*$.\\
Set an initial guess $q_0^\sigma=\frac{f^\sigma+p(x) Sf^\sigma}{u_e(x,T;0)}$; \\
{\bf for} $j=0, 1, \cdots$, {\bf do}\\
Solve \ref{p1} for $f^\sigma$ with $\lam$ replaced by the current value of $\lam_{j}$ on the mesh;\\
 Update
 $$q_{j+1}^\sigma=\frac{\partial_t u_m(x,T;q_j^\sigma)+f^\sigma+p(x)Sf^\sigma}{u_e(x,T;q_j^\sigma)};$$\\
 {\bf end for} if $\|q_{j+1}^\sigma-q_j^\sigma\|_{L^2(\Omega)}<10^{-10}$;\\
 $q^* \leftarrow q^\sigma\leftarrow q_j^\sigma$;\\
 {\bf output:} The approximated $q^*$.
 \caption{A iterative algorithm for finding the fixed point $q^*$ in {\bf P2}}\label{alg2}
\end{algorithm} 

\subsection{Numerical examples.}
In the following examples, we take the domain $\Omega=(0,1)\times(0,1)$ and $T=1$. We apply the finite element method to solve the direct problems. %We set the mesh ${\cal T}_h$ of $\Om$ to be constructed by first dividing $\Om$ into $h^{-1}\times h^{-1}$ uniform rectangles and then connecting the lower left and upper right vertices of each rectangle. 
If not specified, we set the mesh size $h=0.01$ and the time step size $\tau=0.01$, which are sufficiently small so that the finite element errors are negligible. 

%\subsubsection{Numerical investigation for Algorithm \ref{alg1}}
\begin{example}\label{ex1}
Suppose that the exact solution in \ref{eqn:elliptic} is given by 
\begin{equation}
f^*(x,y)=\sin(2\pi x)\sin(2\pi y), \quad (x,y)\in (0,1)^2. 
\end{equation}
We perform the Algorithm \ref{alg1} for solving the problem {\bf P1}.
\end{example}

For this example, we set the number of observation points $x_i$ to be $n=10^4$. If not specified, we fix the variance $\sigma=0.002$ which implies that the relative noise level $\sigma/\|\mathcal{S}f^*\|_{L^\infty(\Om)}$ is about $10\%$ since $\|Sf^*\|_{L^\infty(\Omega)}\approx 0.02$. To show the numerical accuracy, we define the relative errors below:
\begin{equation}
Err1:= \frac{\|Sf^\sigma-Sf^*\|_n}{\|Sf^*\|_n}, \quad Err2:= \frac{\|f^\sigma-f^*\|_{(H^1(\Omega))^*}}{\|f^*\|_{(H^1(\Omega))^*}}, \quad Err3:= \frac{\|f^\sigma-f^*\|_{L^2(\Omega)}}{\|f^*\|_{L^2(\Omega)}},
\end{equation}
where $f^*$ is the exact solution and $f^\sigma$ is the numerical reconstruction by Algorithm \ref{alg1}. Then, we do the following:

(1) Firstly, we numerically verify the optimal choice of parameter $\lambda$ as in \ref{examo1} and test the efficiency of the Algorithm \ref{alg1} for estimating this optimal $\lambda$. We choose several values of regularization parameter $\lambda$ from $10^{-3}$ to $10^{-10}$, and for each choice, we solve {\bf P1} for its minimizer $f^\sigma$, then plot the logarithmic values of $Err1$, $Err2$ and $Err3$ in Figure \ref{ex1-fig1}, respectively. 

%....................................figure 1........................................
\begin{figure}[h!]
\begin{center}
\includegraphics[width=1\textwidth,height=8cm]{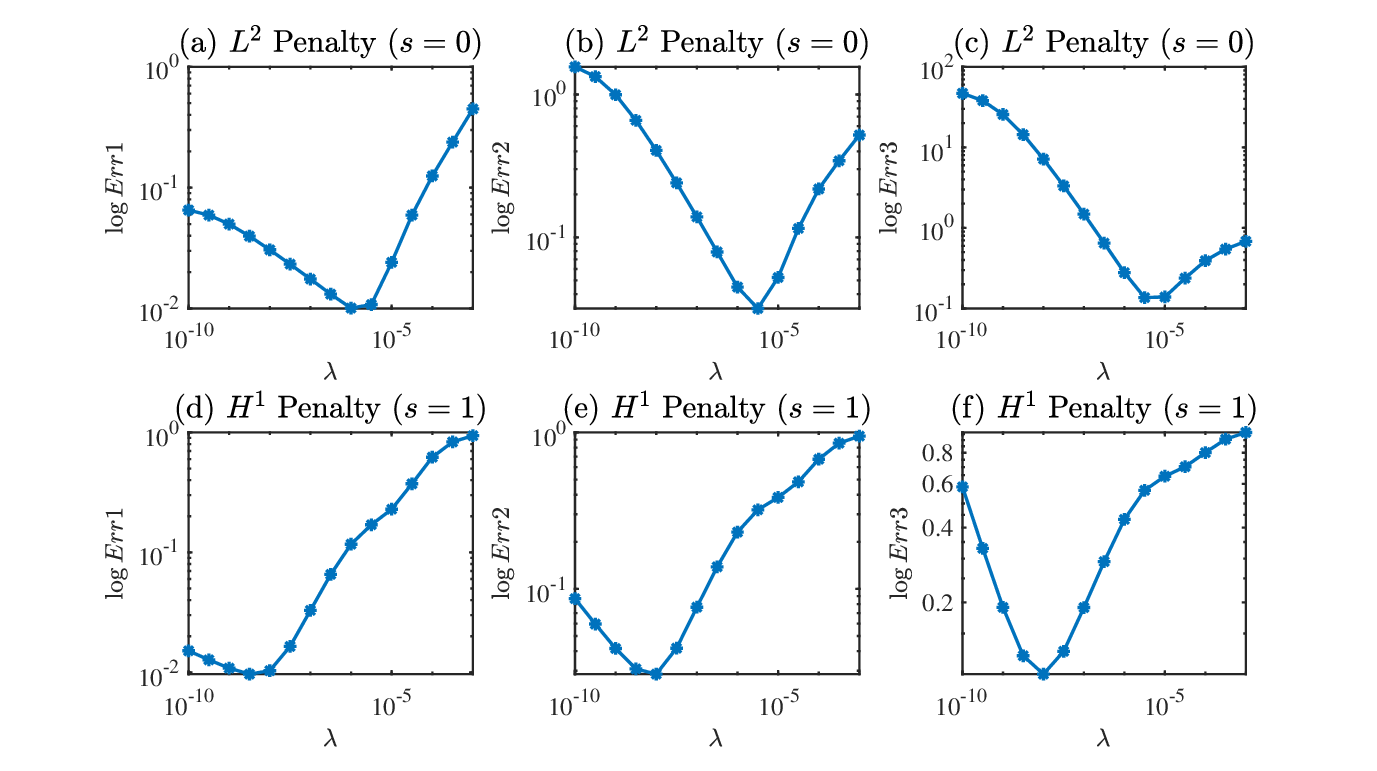}
\caption{Numerical results for different regularization parameter $\lambda$ (Manual test, noise $10\%$).}\label{ex1-fig1}
\end{center}
\end{figure}

(2) Next, we estimate the optimal regularization parameter $\lambda$ automatically using Algorithm \ref{alg1} and show the iteration process in Figure \ref{ex1-fig2}, respectively. For better readability, we summarize the estimations of the nearly optimal regularization parameter $\lambda$ by \ref{examo1}, manual test, and Algorithm \ref{alg1} in Table \ref{ex1-tab1}, respectively.
We show the exact and reconstructed solutions from the data with different noise levels in Figure \ref{ex1-fig3}, respectively.

%.......................................figure 2.......................................
\begin{figure}[h!]
\begin{center}
\includegraphics[width=1\textwidth,height=10cm]{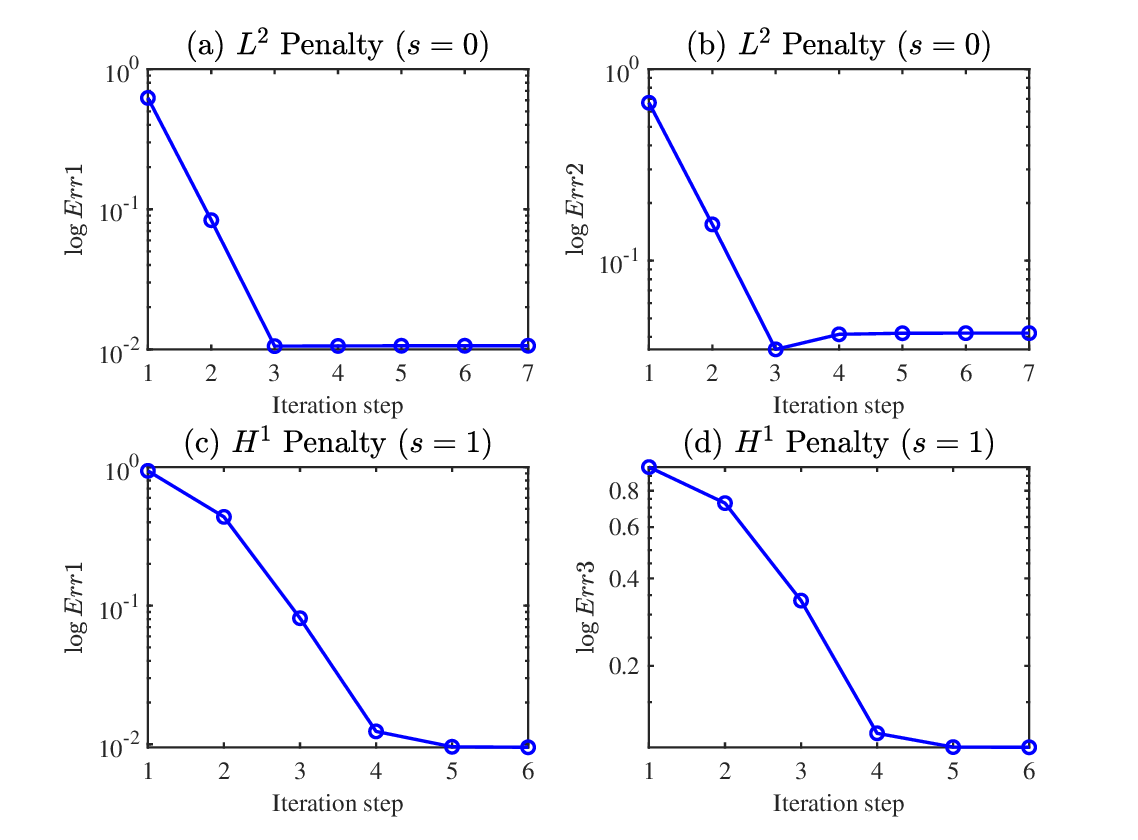}
\caption{The optimal regularization parameter provided by self-consistent Algorithm \ref{alg1}. The empirical errors $\log Err1$, $\log Err2$ and $\log Err3$ by Algorithm \ref{alg1} with respect to iteration number ($s=0$ or $1$).}\label{ex1-fig2}
\end{center}
\end{figure}

%................................................table 1...........................................
\begin{table}[h!]
\caption{Numerical results for  Example \ref{ex1} (noise $10\%$).}\label{ex1-tab1}
\begin{center}
\begin{tabular}{cccccc}
\toprule
   & \text{Method}     &\text{Optimal $\lambda$}  &$Err1$  &$Err2$ &$Err3$\\
\midrule
$L^2$ \text{penalty} $(s=0)$    & \ref{examo1}      &  1.35e-6    &  1.08e-2    & 4.16e-2 & 2.23e-1\\
& Manual test   &  1.00e-6   &  1.01e-2   &  4.49e-2 & 2.79e-1 \\
& Algorithm \ref{alg1}    & 1.31e-6    & 1.06e-2    & 4.18e-2 & 2.40e-1\\
\midrule
$H^1$ \text{penalty} $(s=1)$ & \ref{examo1}   & 9.32e-9     & 1.03e-2  & 2.93e-2  & 1.02e-1 \\
& Manual test       & 3.16e-9     & 9.70e-3 & 3.08e-2 & 1.22e-1  \\
& Algorithm \ref{alg1} & 1.06e-8  & 9.70e-3 & 2.94e-2 & 1.03e-1  \\
\bottomrule
\end{tabular}
\end{center}
\end{table}

(3) Finally, we numerically verify the convergence rates in Theorem \ref{thm:2.1}. We fix $\sigma=0.002$ and choose the number of data $n$ from $10^4$ to $10^6$. We use \ref{examo1} to determine the optimal parameter $\lambda$ for each $n$. Furthermore, for each $n$, we perform the inversion ten times using different random noisy data sets and take the average of ten inversion results. Theorem \ref{thm:2.1} shows that 
\begin{equation}\label{Err}
\begin{cases}
\begin{aligned}
\mathbb{E}(Err1) &\leq C {\lambda}^{1/2}, && s\in\{0,1\},\\
\mathbb{E}(Err2) &\leq C {\lambda}^{1/4}, && s=0,\\
\mathbb{E}(Err3) &\leq C {\lambda}^{1/6}, && s=1,\\
\end{aligned}
\end{cases}
\end{equation}
where $C$ are some constants independent of $n$, $\sigma$ and $\lambda$. We  numerically verify above estimates in Figure \ref{ex1-fig4}, respectively.

%.......................................figure 3.......................................
\begin{figure}[h!]
\begin{center}
\includegraphics[width=1\textwidth,height=6.5cm]{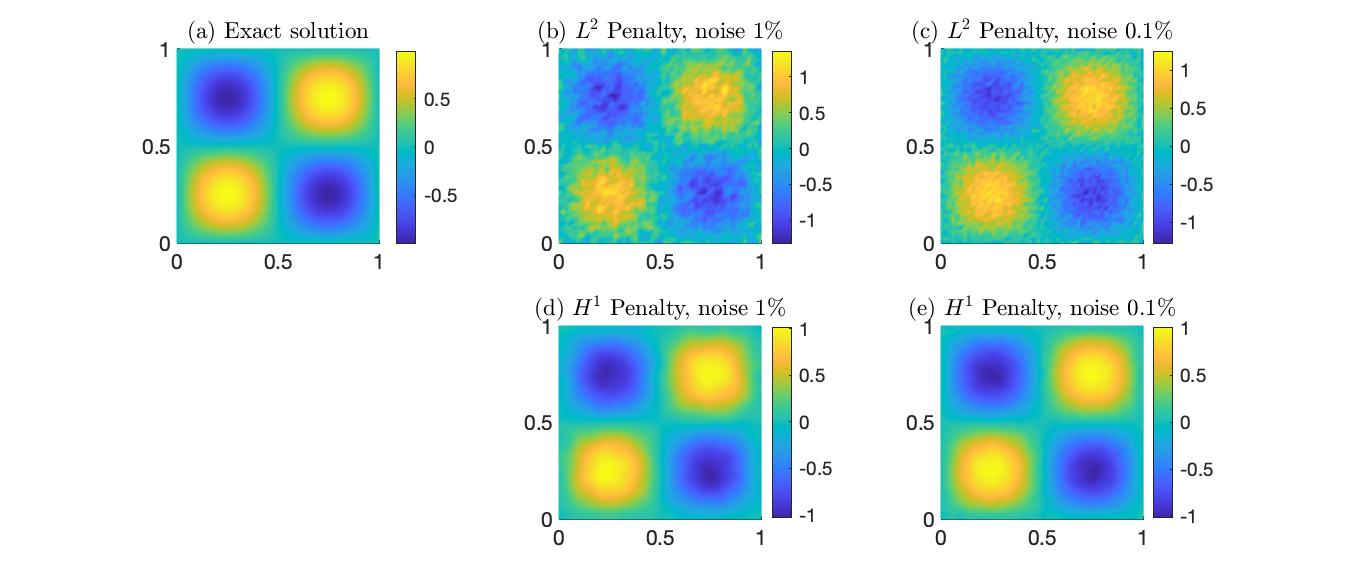}
\caption{The exact and reconstructed solutions by Algorithm \ref{alg1} for Example \ref{ex1}.}\label{ex1-fig3}
\end{center}
\end{figure}

%.....................figure4.......................................
\begin{figure}[h!]
\begin{center}
\includegraphics[width=1\textwidth,height=11cm]{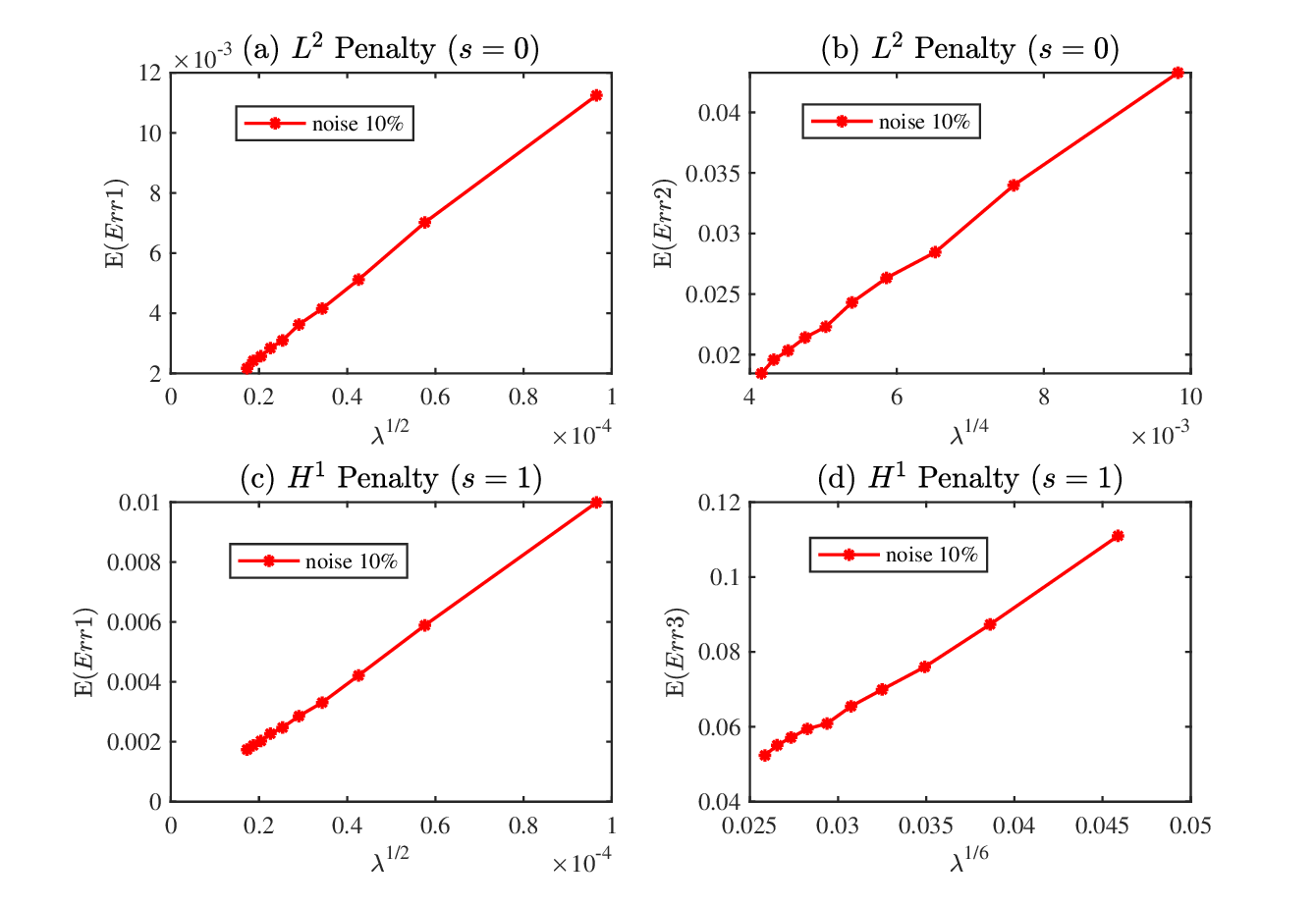}
\caption{(a), (c): The linear dependence of $\mathbb{E}(Err1)$ on $\lambda^{1/2}$ for $s=0$ and $s=1$; (b): The linear dependence of $\mathbb{E}(Err2)$ on $\lambda^{1/3}$ for $s=0$; (d): The linear dependence of $\mathbb{E}(Err3)$ on $\lambda^{1/6}$ for $s=1$.}\label{ex1-fig4}
\end{center}
\end{figure}

It can be seen from Table \ref{ex1-tab1} that the proposed algorithm \ref{alg1} is very effective in determining a nearly optimal regularization parameter $\lambda$ iteratively, without the knowledge of $f^*$ and $\sigma$. In fact,  \ref{examo1} suggests the optimal regularization parameter $\lambda\approx 1.3511e\times 10^{-6}$ and $\lambda\approx 9.3234\times 10^{-9}$ for the minimization problem \ref{p1} with penalty $L^2$ and penalty $H^1$, respectively. The manual numerical test  provides the optimal $\lambda\approx 1.0000\times 10^{-6}$ $(s=0)$ and $\lambda\approx 3.1623\times 10^{-9}$ $(s=1)$.  These approximate values are in fact very close, implying that the estimate \ref{examo1} is valid. Furthermore, Figure \ref{ex1-fig2} clearly shows the convergence of the sequence $\{\lambda_{j}\}$ generated by Algorithm \ref{alg1} and the convergence is very fast. The numerical computation gives $\lambda_{7}= 1.3087\times 10^{-6}$ and $\lambda_{6}=1.0551\times 10^{-8}$ that agree very well with the optimal choices given by \ref{examo1} for $s=0$ and 1, respectively. The proposed Algorithm \ref{alg1} can determine the nearly optimal regularization parameter and, as shown in Figure \ref{ex1-fig3} the reconstructions are satisfactory. Moreover, we can see from Figure \ref{ex1-fig4} that $\mathbb{E}(Err1)$, $\mathbb{E}(Err2)$ and $\mathbb{E}(Err3)$ linearly depend on $\lambda^{1/2}$, $\lambda^{1/4}$ and $\lambda^{1/6}$, respectively. This verifies the conclusions in Theorem \ref{thm:2.1}.

Next, we use the following example to investigate the performance of Algorithm \ref{alg2} for solving the problem $\bf P2$. 

\begin{example}\label{ex2}
Suppose that the problem data in \ref{PDE_ue}-\ref{PDE_um} are given by 
\begin{equation}
b(x,y,t)=(x+y)^2t+5, \quad p(x,y)=x+y+10,\quad (x,y)\in (0,1)^2, \; t\in (0,1). 
\end{equation}
We test the following two exact sources:
\begin{itemize}
\item[(1)] Smooth source:
\begin{equation}\label{ex2-smooth}
q_1^*(x,y)=2+\cos(2\pi x)\cos(2\pi y). 
\end{equation}
\item[(2)] Discontinuous source:
\begin{equation}\label{ex2-nonsmooth}
q_2^*(x,y)=
\begin{cases}
\begin{aligned}
1, & \quad \|(x,y)-(0.3,0.8)\|_2\leq 0.1,\\
1, & \quad \|(x,y)-(0.7,0.8)\|_2\leq 0.1,\\
1, & \quad 0.2\leq\|(x,y)-(0.4,0.5)\|_2\leq 0.3,\\
0, & \quad {\rm else}.
\end{aligned}
\end{cases}
\end{equation}
\end{itemize}
See Figure \ref{ex2-fig5} for the profiles of above two exact solutions.
\end{example}

%..........................figure 5.......................................
\begin{figure}[h!]
\begin{center}
\includegraphics[width=1\textwidth,height=4.5 cm]{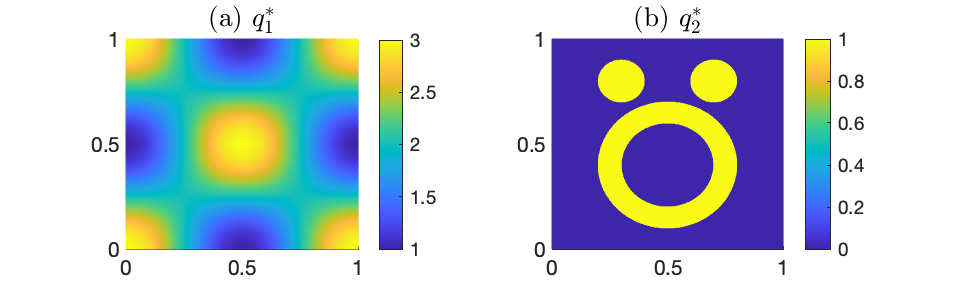}
\caption{Profiles of exact solutions $q_1^*$ and $q_2^*$ in Example \ref{ex2}.}\label{ex2-fig5}
\end{center}
\end{figure}

For this example, we set the number of observation points $x_i$ to be $n=500$ if not specified. To show the numerical accuracy, we define the relative errors below:
\begin{equation}
 Err4:= \frac{\|q^\sigma-q^*\|_{(H^1(\Omega))^*}}{\|q^*\|_{(H^1(\Omega))^*}}, \quad Err5:= \frac{\|q^\sigma-q^*\|_{L^2(\Omega)}}{\|q^*\|_{L^2(\Omega)}}, 
\end{equation}
where $q^*$ is the exact solution and $q^\sigma$ is the numerical reconstruction by Algorithm \ref{alg2}. Then, we do the following:

(1) Firstly, we verify the effectiveness of Algorithm \ref{alg2} for solving {\bf P2}. We fix the initial guess as $q_0^\sigma=(f^\sigma+p(x) Sf^\sigma)/{u_e(x,T;0)}$, where $f^\sigma$ and $Sf^\sigma$ are reconstructed by Algorithm \ref{alg1} from the discrete noise data $\{g_i^\sigma\}_{i=1}^n$. The reconstructions of $q_1^*$ with $L^2$ penalty and  $H^1$ penalty in {\bf P1} are plotted in Figures \ref{ex2-fig6} and \ref{ex2-fig7}, respectively. The reconstructions of $q_2^*$ with $L^2$ penalty and  $H^1$ penalty in {\bf P1} are plotted in Figures \ref{ex2-fig8} and \ref{ex2-fig9}, respectively. We summarize the reconstructions in Table \ref{ex2-tab2}, respectively.

%.......................................figure 6.......................................
\begin{figure}[h!]
\begin{center}
\includegraphics[width=1\textwidth,height=6.5 cm]{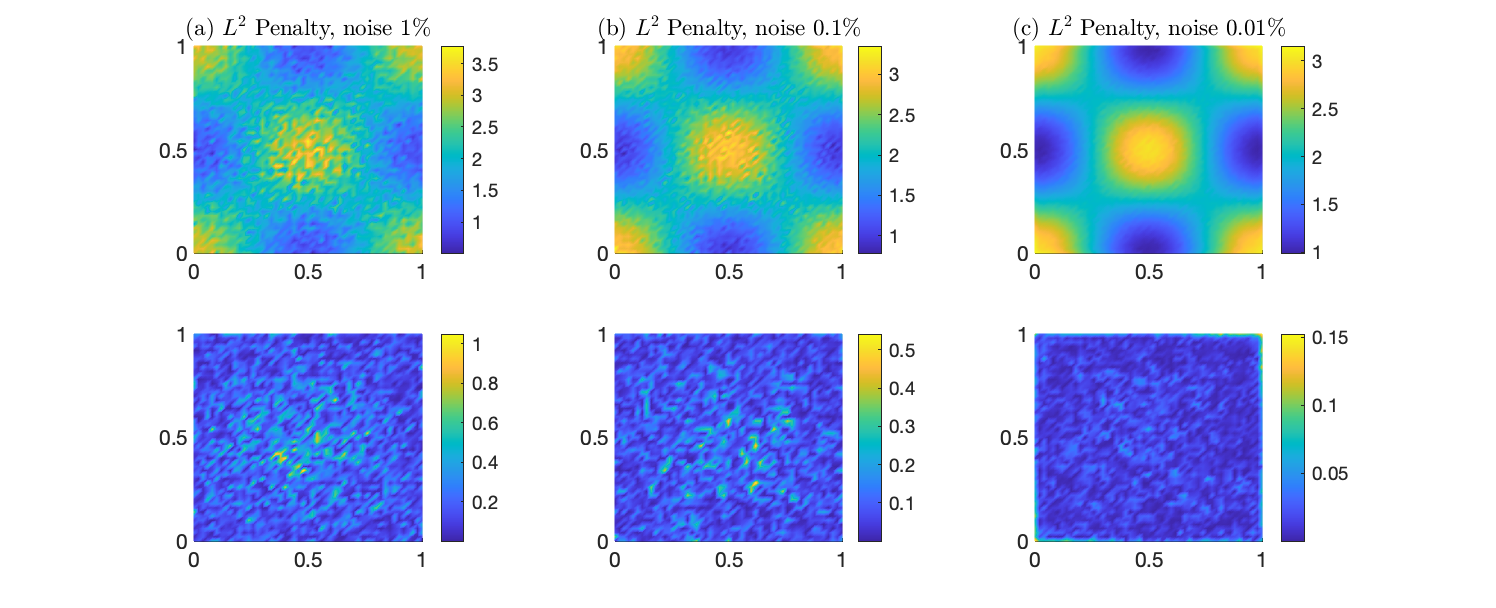}
\caption{Reconstructions of smooth source $q_1^*$ in Example \ref{ex2} with $L^2$ penalty in {\bf P1}. The first raw are profiles of numerical reconstructions $q^\sigma$ and the second raw are their corresponding point error $|q^\sigma-q_1^*|$.}\label{ex2-fig6}
\end{center}
\end{figure}

%.......................................figure 7.......................................
\begin{figure}[h!]
\begin{center}
\includegraphics[width=1\textwidth,height=6.5 cm]{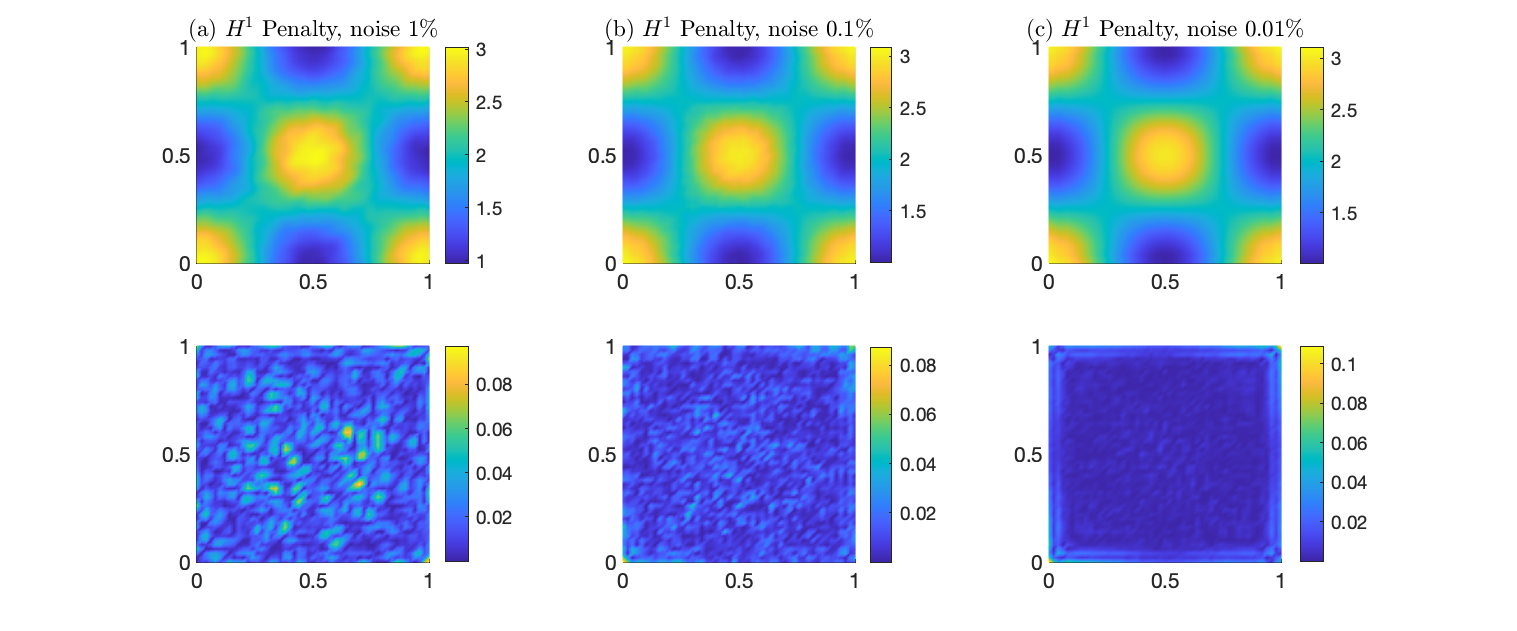}
\caption{Reconstructions of smooth source $q_1^*$ in Example \ref{ex2} with $H^1$ penalty in {\bf P1}. The first raw are profiles of numerical reconstructions $q^\sigma$ and the second raw are their corresponding point error $|q^\sigma-q_1^*|$.}\label{ex2-fig7}
\end{center}
\end{figure}

%.......................................figure 8.......................................
\begin{figure}[h!]
\begin{center}
\includegraphics[width=1\textwidth,height=6.5 cm]{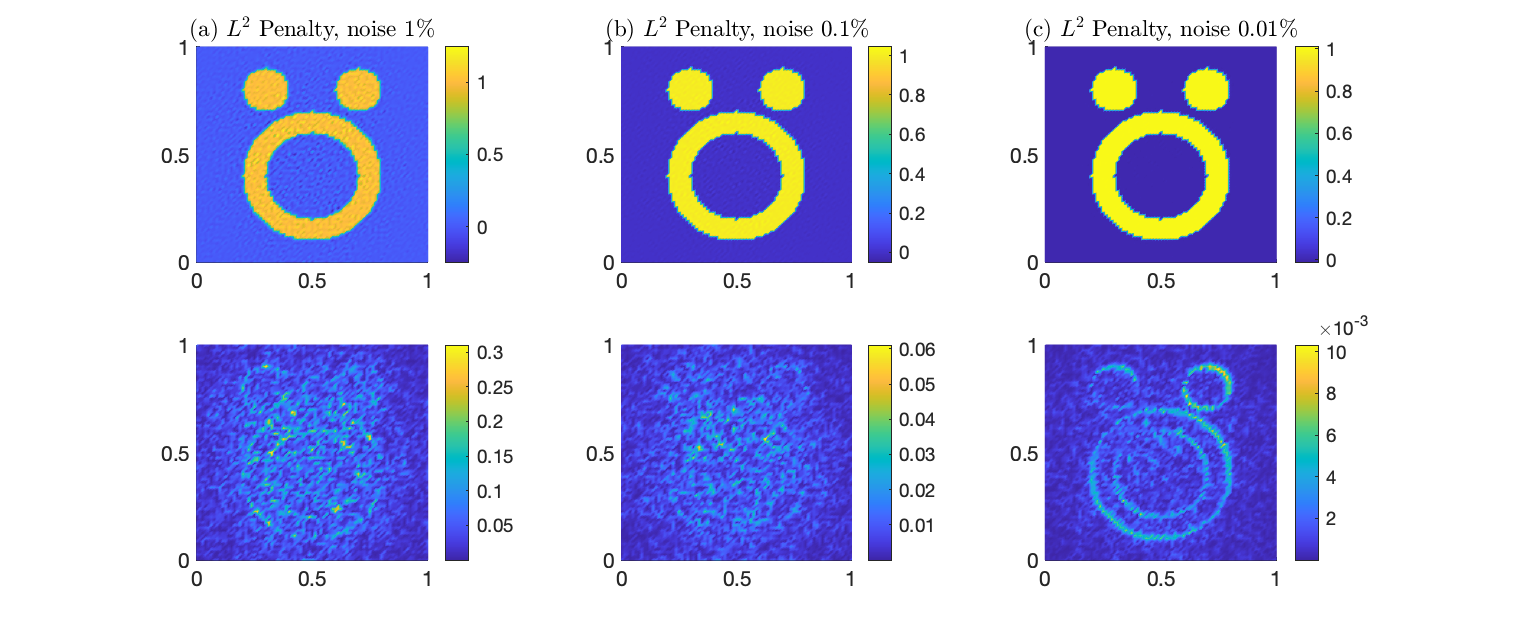}
\caption{Reconstructions of $q_2^*$ in Example \ref{ex2} with $L^2$ penalty in {\bf P1}. The first raw are profiles of numerical reconstructions $q^\sigma$ and the second raw are their corresponding point error $|q^\sigma-q_2^*|$.}\label{ex2-fig8}
\end{center}
\end{figure}

%.......................................figure 9.......................................
\begin{figure}[h!]
\begin{center}
\includegraphics[width=1\textwidth,height=6.5 cm]{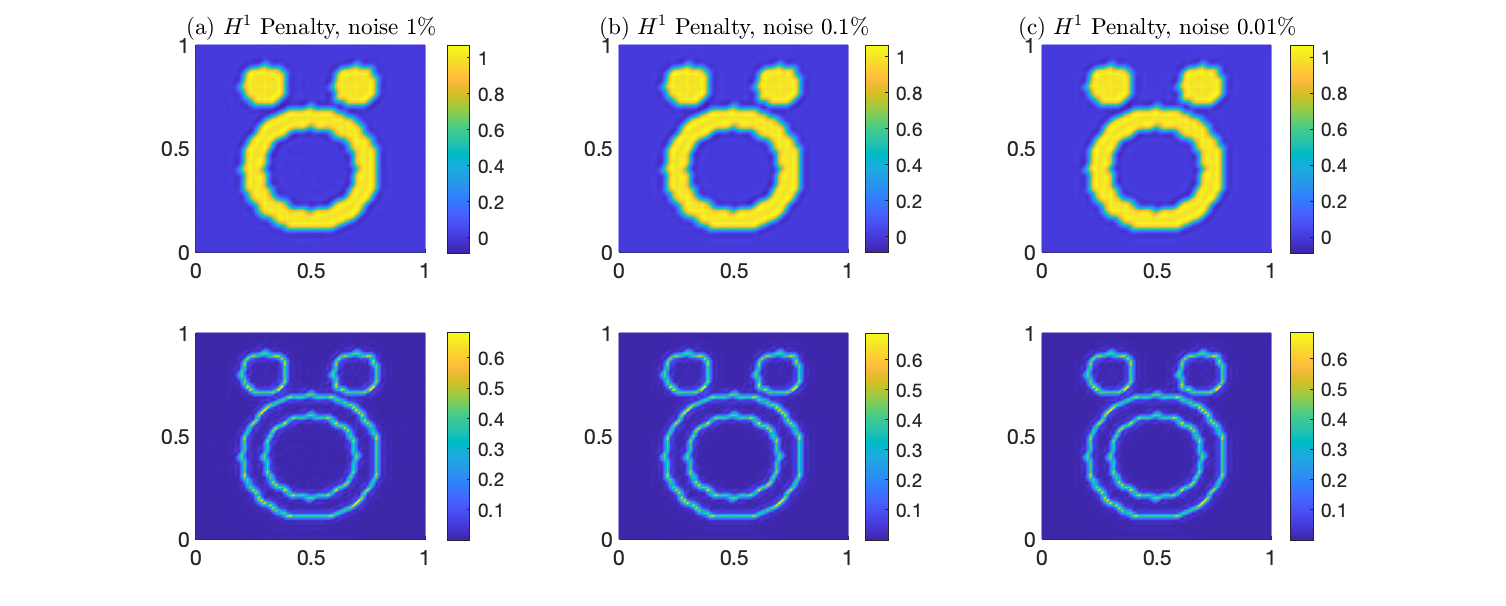}
\caption{The reconstructions of $q_2^*$ in Example \ref{ex2} with $H^1$ penalty in {\bf P1}. The first raw are profiles of numerical reconstructions $q^\sigma$ and the second raw are their corresponding point error $|q^\sigma-q_2^*|$.}\label{ex2-fig9}
\end{center}
\end{figure}

%............................................table 2...........................................
\begin{table}[h!]
\caption{Numerical results for Example \ref{ex2}.}\label{ex2-tab2}
\begin{center}
\begin{tabular}{llllll}
\toprule
Exact source & Noise level & \multicolumn{2}{c}{$L^2$ \text{penalty} in {\bf P1}} & \multicolumn{2}{c}{$H^1$ \text{penalty} in {\bf P1}}\\
\cmidrule(lr){3-4} \cmidrule(lr){5-6}
 & &$Err4$ &$Err5$ &$Err4$ &$Err5$\\
\midrule
\text{Smooth source} $q_1^*$ & 1\% & 1.30e-3 &7.90e-2 & 4.71e-4 &9.80e-3\\
& 0.1\% & 5.01e-4 &3.21e-2 & 3.67e-4 &5.00e-3\\
& 0.01\% & 3.73e-4 &6.70e-3 & 3.63e-4 &3.90e-3\\
\midrule
\text{Discontinuous source} $q_2^*$ & 1\% & 1.00e-3 &7.86e-2 & 1.00e-2 &1.96e-1\\
& 0.1\% &  3.24e-4 & 1.25e-2 & 1.00e-2 &1.95e-1\\
& 0.01\% & 2.94e-4 & 2.80e-3 & 1.00e-2 &1.95e-1\\
\bottomrule
\end{tabular}
\end{center}
\end{table}

(2) Next, we test the convergence of the iteration produced by Algorithm \ref{alg2} with different noise levels and penalties in {\bf P1}. In the experiments, we use the exact solution $q^*=q_1^*$. For the case of using the $L^2$ penalty in {\bf P1}, the logarithmic values of $Err4$ and $Err5$ are plotted in Figure \ref{ex2-fig10}, (a) and (b), respectively. The values for the case of $H^1$ penalty in {\bf P1} are plotted in Figure \ref{ex2-fig10}, (c) and (d), respectively.

%.......................................figure 10.......................................
\begin{figure}[h!]
\begin{center}
\includegraphics[width=1\textwidth,height=11 cm]{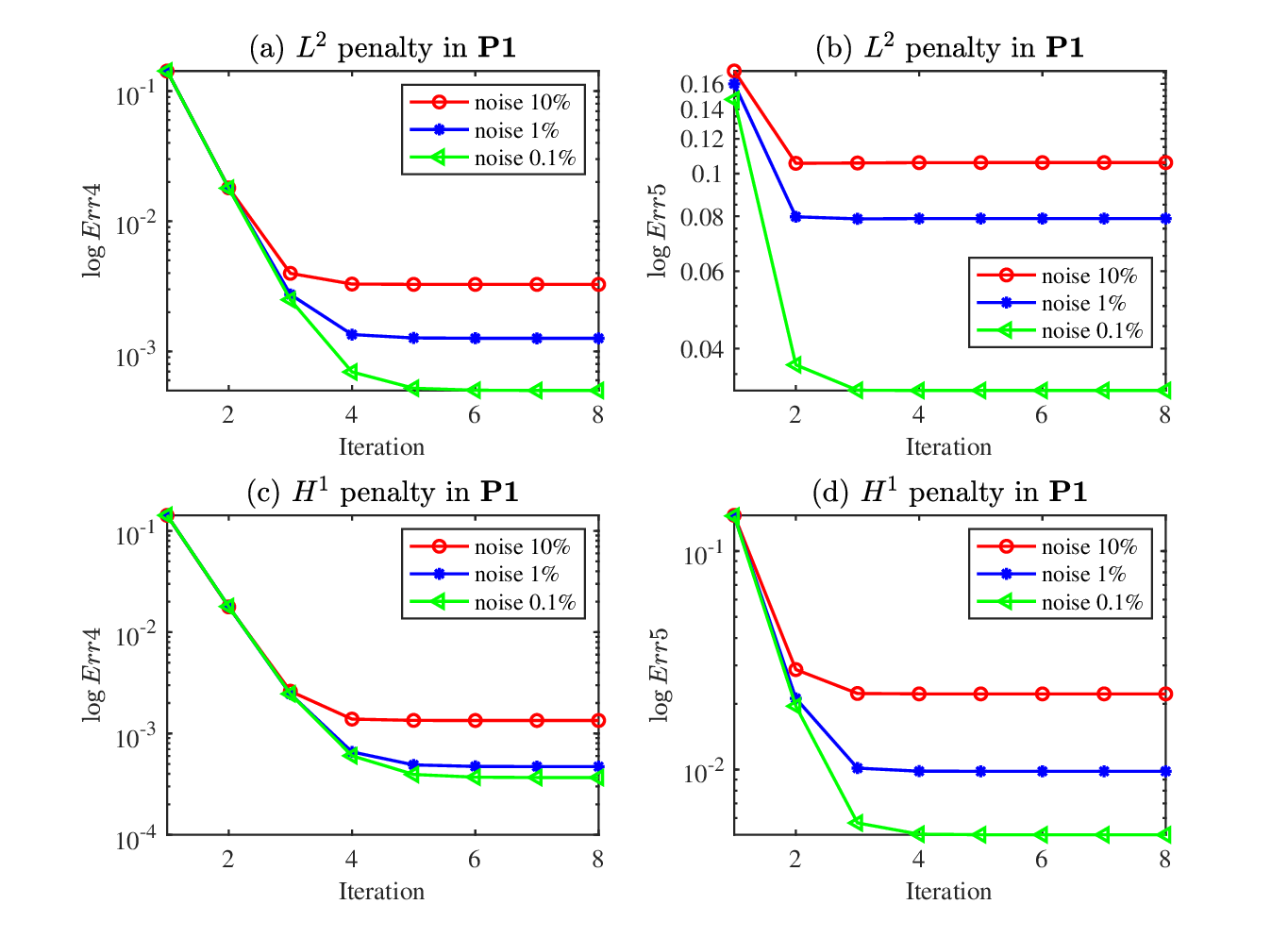}
\caption{Convergence histories of Algorithm \ref{alg2} with different noise levels, where $q^*=q_1^*$ in Example \ref{ex2}.}\label{ex2-fig10}
\end{center}
\end{figure}

(3) Finally, for the case of $q^*=q_1^*$, we numerically verify the convergence rates in Theorem \ref{thm-estimate-3.2}. We fix the noise level as $10\%$ and choose the number of data $n$ from $10^2$ to $10^4$. For each $n$, we use \ref{examo1} to determine the optimal parameter $\lambda$ and perform the inversion ten times using different noisy random data sets. Theorem \ref{thm-estimate-3.2} shows that 
\begin{equation}\label{Err-ex2}
\begin{cases}
\begin{aligned}
\mathbb{E}(Err4) &\leq C {\lambda}^{1/4}, && s=0,\\
\mathbb{E}(Err5) &\leq C {\lambda}^{1/6}, && s=1,\\
\end{aligned}
\end{cases}
\end{equation}
where $C$ are some constants independent of $n$, $\sigma$ and $\lambda$. We verify the above estimates in Figure \ref{ex2-fig11}, respectively. Figure \ref{ex2-fig11}, (a),  (c) show the convergence of ten times inversions for $s=0$ and $1$, respectively, and Figure \ref{ex2-fig11}, (b),  (d) show the convergence of the average of above ten times inversions, respectively.

We clearly observe from the results in Table \ref{ex2-tab2} and Figures \ref{ex2-fig6}-\ref{ex2-fig9} that both the smooth source $q_1^*$ and discontinuous source $q_2^*$ can be reconstructed well using our proposed step-wise inversion algorithms. Besides, the reconstructions show that for smooth source $q_1^*$, the choice of $H^1$ penalty in {\bf P1} could provide better results than the choice of $L^2$ penalty. However, since the nonsmoothness of discontinuous source $q_2^*$, it appears better to choose $L^2$ penalty in {\bf P1} than $H^1$ penalty for reconstructing $q_2^*$. Moreover, as in Figure \ref{ex2-fig10}, our experiments show that for smooth source the Algorithm \ref{alg2} is convergent under either $(H^1(\cdot))^*$ norm or $L^2$ norm with $H^1$ penalty and $L^2$ penalty in {\bf P1}, respectively. Finally, in Figure \ref{ex2-fig11}, under the optimal choice of regularization parameter $\lambda$, we clearly observe that $\mathbb{E}(Err4)$ linearly depends on $\lambda^{1/4}$ and $\mathbb{E}(Err5)$ linearly depends on $\lambda^{1/6}$, respectively. This verifies the convergence rates $\mathbb{E}(\|q^\sigma-q^*\|_{(H^1(\Omega))^*})=O(\lambda^{\frac{1}{4}})$ and $\mathbb{E}(\|q^\sigma-q^*\|_{L^2(\Omega)})=O(\lambda^{\frac{1}{6}})$ in Theorem \ref{thm-estimate-3.2}.

%.......................................figure 11......................................
\begin{figure}[ht]
\begin{center}
\includegraphics[width=1\textwidth,height=11 cm]{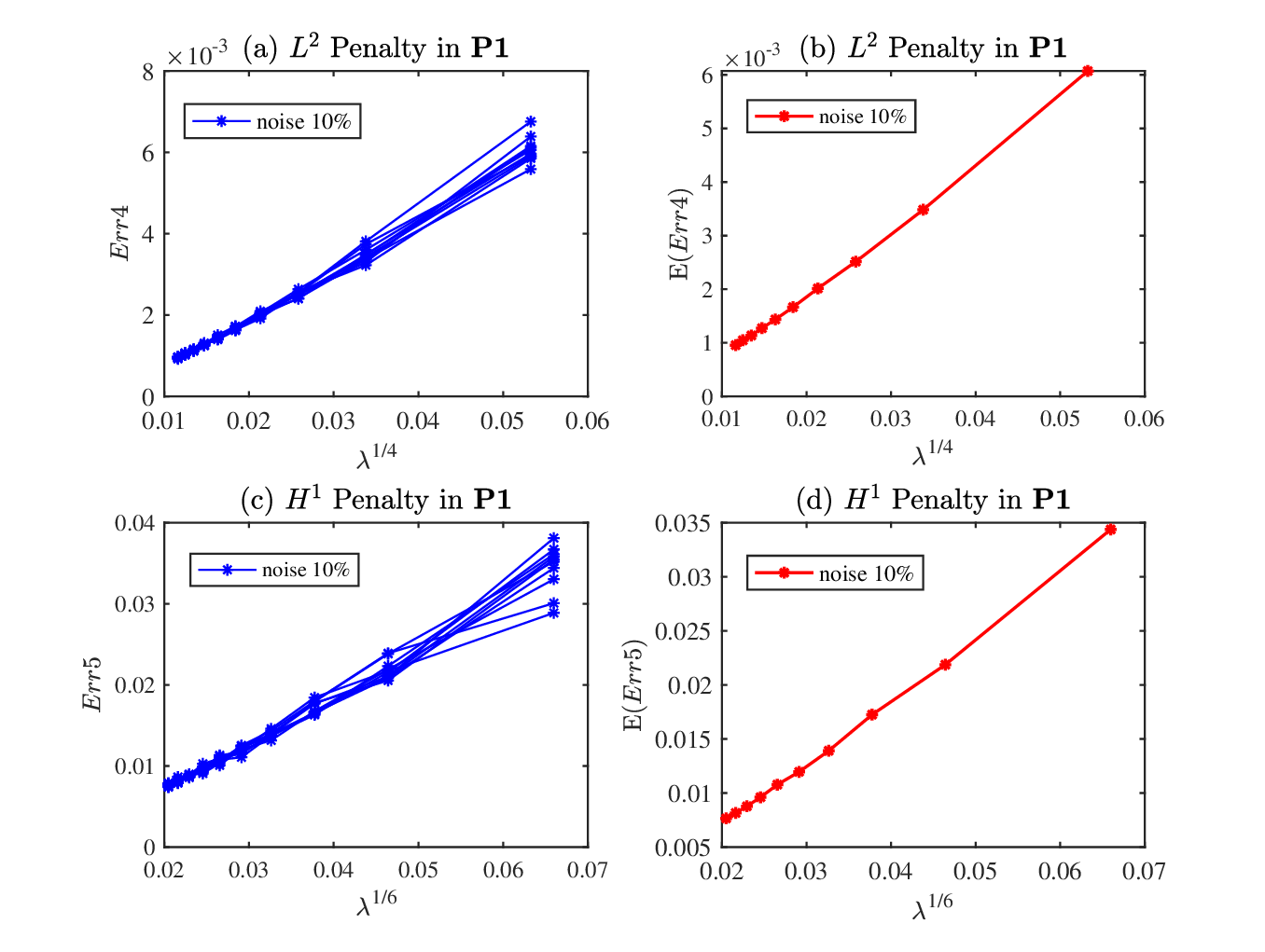}
\caption{(a), (b): The convergence of ten times inversions and the linear dependence of $\mathbb{E}(Err4)$ on $\lambda^{1/4}$ for $L^2$ penalty in {\bf P1} ($s=0$). (c), (d): The convergence of ten times inversions and the linear dependence of $\mathbb{E}(Err5)$ on $\lambda^{1/6}$ for $H^1$ penalty in {\bf P1} ($s=1$).}\label{ex2-fig11}
\end{center}
\end{figure}

\section*{Acknowledgements.}
Zhidong Zhang is supported by the National Key Research and Development Plan of China (Grant No. 2023YFB3002400); Chunlong Sun is supported by the National Natural Science Foundation of China (No.12201298); Wenlong Zhang is supported by the National Natural Science Foundation of China under grant numbers No.12371423 and No.12241104.

\bibliographystyle{plain} % apa abbrv
\bibliography{ref}

\begin{thebibliography}{10}

\bibitem{new1}
Batoul Abdelaziz, Abdellatif El~Badia, and Ahmad El~Hajj.
\newblock Reconstruction of extended sources with small supports in the
  elliptic equation {$\Delta u+\mu u=F$} from a single {C}auchy data.
\newblock {\em C. R. Math. Acad. Sci. Paris}, 351(21-22):797--801, 2013.

\bibitem{Arridge99}
Simon~R. Arridge.
\newblock Optical tomography in medical imaging.
\newblock {\em Inverse Problems}, 15(2):R41--R93, 1999.

\bibitem{Arr09}
Simon~R. Arridge and John~C. Schotland.
\newblock Optical tomography: forward and inverse problems.
\newblock {\em Inverse Problems}, 25(12):123010, 59, 2009.

\bibitem{AB01}
Juliana Atmadja and Amvrossios~C. Bagtzoglou.
\newblock State of the art report on mathematical methods for groundwater
  pollution source identification.
\newblock {\em Environmental Forensics}, 2(3):205--214, 2001.

\bibitem{new3}
A.~El Badia, T.~Ha Duong, and F.~Moutazaim and.
\newblock Numerical solution for the identification of source terms from
  boundary measurements.
\newblock {\em Inverse Problems in Engineering}, 8(4):345--364, 2000.

\bibitem{new2}
A.~El Badia, A.~El Hajj, M.~Jazar, and H.~Moustafa.
\newblock Lipschitz stability estimates for an inverse source problem in an
  elliptic equation from interior measurements.
\newblock {\em Applicable Analysis}, 95(9):1873--1890, 2016.

\bibitem{BaoLiZhao:2020}
Gang Bao, Peijun Li, and Yue Zhao.
\newblock Stability for the inverse source problems in elastic and
  electromagnetic waves.
\newblock {\em J. Math. Pures Appl. (9)}, 134:122--178, 2020.

\bibitem{ZCP01}
Zhu Bing-Quan, Chen Yu-Wei, and Peng Jian-Hua.
\newblock Lead isotope geochemistry of the urban environment in the pearl river
  delta.
\newblock {\em Applied Geochemistry}, 16(4):409--417, 2001.

\bibitem{Birman}
M.~\v~S. Birman and M.~Z. Solomjak.
\newblock Piecewise polynomial approximations of functions of classes
  {$W\sb{p}{}\sp{\alpha }$}.
\newblock {\em Mat. Sb. (N.S.)}, 73(115):331--355, 1967.

\bibitem{Chen-Zhang}
Zhiming Chen, Rui Tuo, and Wenlong Zhang.
\newblock Stochastic convergence of a nonconforming finite element method for
  the thin plate spline smoother for observational data.
\newblock {\em SIAM J. Numer. Anal.}, 56(2):635--659, 2018.

\bibitem{Chen-Zhang2022}
Zhiming Chen, Wenlong Zhang, and Jun Zou.
\newblock Stochastic convergence of regularized solutions and their finite
  element approximations to inverse source problems.
\newblock {\em SIAM J. Numer. Anal.}, 60(2):751--780, 2022.

\bibitem{ChengYamamoto:2022}
Jin Cheng and Masahiro Yamamoto.
\newblock Continuation of solutions to elliptic and parabolic equations on
  hyperplanes and application to inverse source problems.
\newblock {\em Inverse Problems}, 38(8):Paper No. 085005, 23, 2022.

\bibitem{DingGongLiuLo:2024}
Ming-Hui Ding, Rongfang Gong, Hongyu Liu, and Catharine W.~K. Lo.
\newblock Determining sources in the bioluminescence tomography problem.
\newblock {\em Inverse Problems}, 40(12):Paper No. 125022, 28, 2024.

\bibitem{Badia2011}
Abdellatif El~Badia and Takaaki Nara.
\newblock An inverse source problem for {H}elmholtz's equation from the
  {C}auchy data with a single wave number.
\newblock {\em Inverse Problems}, 27(10):105001, 15, 2011.

\bibitem{Evans:1998}
Lawrence~C. Evans.
\newblock {\em Partial differential equations}, volume~19 of {\em Graduate
  Studies in Mathematics}.
\newblock American Mathematical Society, Providence, RI, 1998.

\bibitem{Fleckinger}
Jacqueline Fleckinger and Michel~L. Lapidus.
\newblock Eigenvalues of elliptic boundary value problems with an indefinite
  weight function.
\newblock {\em Trans. Amer. Math. Soc.}, 295(1):305--324, 1986.

\bibitem{FuZhang:2021}
Shubin Fu and Zhidong Zhang.
\newblock Application of the generalized multiscale finite element method in an
  inverse random source problem.
\newblock {\em J. Comput. Phys.}, 429:Paper No. 110032, 17, 2021.

\bibitem{new4}
Galina~C. Garcia, Axel Osses, and Marcelo Tapia.
\newblock A heat source reconstruction formula from single internal
  measurements using a family of null controls.
\newblock {\em J. Inverse Ill-Posed Probl.}, 21(6):755--779, 2013.

\bibitem{Geer}
S.A. Geer.
\newblock {\em Empirical Processes in M-Estimation}.
\newblock Cambridge Series in Statistical and Probabilistic Mathematics.
  Cambridge University Press, 2000.

\bibitem{GER83}
Steven~M. Gorelick, Barbara Evans, and Irwin Remson.
\newblock Identifying sources of groundwater pollution: An optimization
  approach.
\newblock {\em Water Resources Research}, 19(3):779--790, 1983.

\bibitem{Isakov:1990}
Victor Isakov.
\newblock {\em Inverse source problems}, volume~34 of {\em Mathematical Surveys
  and Monographs}.
\newblock American Mathematical Society, Providence, RI, 1990.

\bibitem{new6}
Victor Isakov.
\newblock {\em Inverse problems for partial differential equations}, volume 127
  of {\em Applied Mathematical Sciences}.
\newblock Springer-Verlag, New York, 1998.

\bibitem{Isakov2013}
Victor Isakov, Shingyu Leung, and Jianliang Qian.
\newblock A three-dimensional inverse gravimetry problem for ice with snow
  caps.
\newblock {\em Inverse Probl. Imaging}, 7(2):523--544, 2013.

\bibitem{IsakovLu:2020}
Victor Isakov and Shuai Lu.
\newblock On the inverse source problem with boundary data at many wave
  numbers.
\newblock In {\em Inverse problems and related topics}, volume 310 of {\em
  Springer Proc. Math. Stat.}, pages 59--80. Springer, Singapore, [2020]
  \copyright 2020.

\bibitem{JiangLiYamamoto:2024}
Daijun Jiang, Zhiyuan Li, and Masahiro Yamamoto.
\newblock Coercivity-based analysis and its application to an inverse source
  problem for a subdiffusion equation with time-dependent principal parts.
\newblock {\em Inverse Problems}, 40(12):Paper No. 125027, 15, 2024.

\bibitem{LassasLiZhang:2023}
Matti Lassas, Zhiyuan Li, and Zhidong Zhang.
\newblock Well-posedness of the stochastic time-fractional diffusion and wave
  equations and inverse random source problems.
\newblock {\em Inverse Problems}, 39(8):Paper No. 084001, 36, 2023.

\bibitem{LiLiWang:2022}
Jianliang Li, Peijun Li, and Xu~Wang.
\newblock Inverse source problems for the stochastic wave equations: far-field
  patterns.
\newblock {\em SIAM J. Appl. Math.}, 82(4):1113--1134, 2022.

\bibitem{LinOuZhangZhang:2024}
Guang Lin, Na~Ou, Zecheng Zhang, and Zhidong Zhang.
\newblock Restoring the discontinuous heat equation source using sparse
  boundary data and dynamic sensors.
\newblock {\em Inverse Problems}, 40(4):Paper No. 045014, 17, 2024.

\bibitem{LinZhangZhang:2022}
Guang Lin, Zecheng Zhang, and Zhidong Zhang.
\newblock Theoretical and numerical studies of inverse source problem for the
  linear parabolic equation with sparse boundary measurements.
\newblock {\em Inverse Problems}, 38(12):Paper No. 125007, 28, 2022.

\bibitem{liu16}
Chein-Shan Liu.
\newblock An integral equation method to recover non-additive and non-separable
  heat source without initial temperature.
\newblock {\em International Journal of Heat and Mass Transfer}, 97:943--953,
  2016.

\bibitem{Liu:2020}
Jijun Liu, Manabu Machida, Gen Nakamura, Goro Nishimura, and Chunlong Sun.
\newblock On fluorescence imaging: The diffusion equation model and recovery of
  the absorption coefficient of fluorophores.
\newblock {\em Science China Mathematics}, 65(6):1179--1198, 2022.

\bibitem{nelson20}
P.A. NELSON and S.H. YOON.
\newblock Estimation of acoustic source strength by inverse methods: Part i,
  conditioning of the inverse problem.
\newblock {\em Journal of Sound and Vibration}, 233(4):639--664, 2000.

\bibitem{RundellZhang:2020}
William Rundell and Zhidong Zhang.
\newblock On the identification of source term in the heat equation from sparse
  data.
\newblock {\em SIAM Journal on Mathematical Analysis}, 52(2):1526--1548, 2020.

\bibitem{SunZhang:2022}
Chunlong Sun and Zhidong Zhang.
\newblock Uniqueness and numerical inversion in the time-domain fluorescence
  diffuse optical tomography.
\newblock {\em Inverse Problems}, 38(10):Paper No. 104001, 23, 2022.

\bibitem{tadi97}
M.~Tadi.
\newblock Inverse heat conduction based on boundary measurement.
\newblock {\em Inverse Problems}, 13(6):1585--1605, 1997.

\bibitem{Vaart}
A.~van~der Vaart and J.A. Wellner.
\newblock {\em Weak Convergence and Empirical Processes: With Applications to
  Statistics}.
\newblock Springer Series in Statistics. Springer, 1996.

\bibitem{WangXuZhao:2024}
Tianjiao Wang, Xiang Xu, and Yue Zhao.
\newblock Stability for a multi-frequency inverse random source problem.
\newblock {\em Inverse Problems}, 40(12):Paper No. 125029, 25, 2024.

\bibitem{ZhangWuGuo:2024}
Deyue Zhang, Yue Wu, and Yukun Guo.
\newblock Imaging an acoustic obstacle and its excitation sources from
  phaseless near-field data.
\newblock {\em Inverse Probl. Imaging}, 18(4):797--812, 2024.

\end{thebibliography}
\end{document}